\documentclass[11pt,a4paper]{amsart}

 \usepackage{amsfonts,amsmath,amscd,amssymb,amsbsy,amsthm,amstext,amsopn}
 \usepackage{fullpage,mathrsfs,subfigure}
 \usepackage{pstricks,pst-node,pst-text,pst-tree}
 \usepackage{stmaryrd}  
 \usepackage{extarrows}
\usepackage[all]{xy}

 \numberwithin{equation}{section}
 \addtolength{\topmargin}{5mm}
 \addtolength{\textheight}{-15mm}
 \setcounter{tocdepth}{1}

\newtheorem{theorem}{Theorem}[section]
\newtheorem{proposition}[theorem]{Proposition}
\newtheorem{lemma}[theorem]{Lemma}

\newtheorem{corollary}[theorem]{Corollary}

\theoremstyle{definition}
\newtheorem{definition}[theorem]{Definition}
\newtheorem{example}[theorem]{Example}

\theoremstyle{remark}
\newtheorem{remark}[theorem]{Remark}

\newcommand{\kk}{\ensuremath{\Bbbk}}

\newcommand{\NN}{\ensuremath{\mathbb{N}}} 
 
\newcommand{\QQ}{\ensuremath{\mathbb{Q}}} 
 
\newcommand{\ZZ}{\ensuremath{\mathbb{Z}}} 
\newcommand{\one}{\ensuremath{(\mathrm{i})}}
\newcommand{\two}{\ensuremath{(\mathrm{ii})}}
\newcommand{\three}{\ensuremath{(\mathrm{iii})}}

\newcommand{\coh}{\operatorname{coh}}

\renewcommand{\div}{\operatorname{div}} 
\newcommand{\head}{\operatorname{\mathsf{h}}}

\newcommand{\git}{\ensuremath{/\!\!/\!}}

\newcommand{\inc}{\operatorname{inc}}

\newcommand{\pic}{\operatorname{pic}}
\newcommand{\rad}{\operatorname{rad}}

\newcommand{\supp}{\operatorname{supp}}
\newcommand{\tail}{\operatorname{\mathsf{t}}}

 \newcommand{\Cl}{\operatorname{Cl}}  

\newcommand{\Cox}{\operatorname{Cox}}
\newcommand{\End}{\operatorname{End}}
\newcommand{\Ext}{\operatorname{Ext}} 
  
\newcommand{\Gr}{\operatorname{Gr}}  
\newcommand{\Hom}{\operatorname{Hom}}

\newcommand{\Ker}{\operatorname{ker}}

\newcommand{\Pic}{\operatorname{Pic}}
\newcommand{\Proj}{\operatorname{Proj}}

\newcommand{\Spec}{\operatorname{Spec}} 
 
\newcommand{\Wt}{\operatorname{Wt}} 

 \DeclareMathOperator{\Rderived}{\mathbf{R}\!}
 
 \newcommand{\modAL}{\operatorname{mod-}\ensuremath{\!A}}
\newcommand{\modA}{\operatorname{mod-}\ensuremath{\!A}}

 \newcommand{\modALk}{\operatorname{mod-}\ensuremath{\!A_k}}

\title{Mori Dream Spaces as fine moduli of quiver representations}
\thanks{MSC 2010: Primary  14D22, 16G20; Secondary 14F05, 16E35, 16S38} 

 \author{Alastair Craw and Dorothy Winn} 
 \address{Department of Mathematics\\ University of Glasgow\\ Glasgow\\ G12 8QW\\ United Kingdom}
 \email{Alastair.Craw@glasgow.ac.uk, Dorothy.Winn@glasgow.ac.uk}

\begin{document}
\bibliographystyle{plain}

 \begin{abstract}
 We construct Mori Dream Spaces as fine moduli spaces of representations of bound quivers, thereby extending results of Craw--Smith~\cite{CrawSmith} beyond the toric case. Any collection of effective line bundles $\mathscr{L}=(\mathscr{O}_X, L_1,\dots, L_r)$ on a Mori Dream Space $X$ defines a bound quiver of sections and a map from $X$ to a toric quiver variety $\vert\mathscr{L}\vert$ called the multigraded linear series. We provide necessary and sufficient conditions for this map to be a closed immersion and, under additional assumptions on $\mathscr{L}$, the image realises $X$ as the fine moduli space of $\vartheta$-stable representations of the bound quiver. As an application, we show how to reconstruct del Pezzo surfaces from a full, strongly exceptional collection of line bundles.
    \end{abstract}
    
 \maketitle
 
 \tableofcontents

 \section{Introduction}
Mori Dream Spaces and their Cox rings have been the subject of a great deal of interest since their introduction by Hu--Keel~\cite{HuKeel} over a decade ago. From the geometric side, these varieties enjoy the property that all operations of the Mori programme can be carried out by variation of GIT quotient, while from the algebraic side, obtaining an explicit presentation of the Cox ring is an interesting problem in itself. Examples of Mori Dream Spaces include $\QQ$-factorial projective toric varieties, spherical varieties and log Fano varieties of arbitrary dimension. In this paper we use the representation theory of quivers to study multigraded linear series on Mori Dream Spaces. Our main results construct Mori Dream Spaces as fine moduli spaces of $\vartheta$-stable representations of bound quivers for a special stability parameter $\vartheta$, thereby extending results of Craw--Smith~\cite{CrawSmith} for projective toric varieties. 

Let $X$ be a projective variety and $\mathscr{L}=(\mathscr{O}_X, L_1, \dots, L_r)$ a collection of distinct, effective line bundles on $X$. The associated \emph{multigraded linear series} $\vert\mathscr{L}\vert$  is a smooth projective toric variety that provides a multigraded analogue of the classical linear series of a single line bundle. To construct $\vert \mathscr{L}\vert$ one first defines the bound quiver of sections of $\mathscr{L}$ to be a finite, acyclic quiver $Q$ together with an ideal of relations $J_\mathcal{L}$ in the path algebra $\kk Q$ for which $\kk Q/J_\mathcal{L}$ is isomorphic to $\End(\bigoplus_{0\leq i\leq r}L_i)$. Setting aside the ideal of relations for now, the multigraded linear series $\vert \mathscr{L}\vert$ is defined to be the toric quiver variety obtained as the fine moduli space of $\vartheta$-stable representations of $Q$ with dimension vector $(1,\dots, 1)$ for the special stability parameter $\vartheta=(-r, 1, \dots, 1)$.  Paths in the quiver arise from spaces of sections of the form $H^0(X, L_j\otimes L_i^{-1})$ for $0\leq i, j\leq r$, and evaluating these sections defines a morphism
\begin{equation}
\label{eqn:morphismintro}
\varphi_{\vert\mathscr{L}\vert}\colon X\longrightarrow \vert\mathscr{L}\vert
\end{equation}
 when each $L_i$ is basepoint-free. In the case where $X$ is a projective toric variety, Craw--Smith~\cite{CrawSmith} constructed the quiver of sections using only torus-invariant sections of the bundles  $L_j\otimes L_i^{-1}$, and described precisely the image of $\varphi_{\vert \mathscr{L}\vert}$.  The bound quiver of sections and a morphism $\varphi_{\vert\mathscr{L}\vert}$ are constructed in general by Craw~\cite[Theorem~5.4]{Craw}, but one cannot study the image without choosing bases for the appropriate spaces of sections.
  
 Our first main result solves this problem for Mori Dream Spaces by choosing a suitable spanning set for each space of sections. Every Mori Dream Space $X$ admits a closed immersion into a $\QQ$-factorial projective toric variety $\widetilde{X}$, such that each line bundle on $X$ is the restriction of a reflexive sheaf of rank one on $\widetilde{X}$. In particular, the collection $\mathscr{L}$ on $X$ can be lifted uniquely to a collection $\widetilde{\mathscr{L}}=(\mathscr{O}_{\widetilde{X}},E_1,\dots, E_r)$ of rank one reflexive sheaves on $\widetilde{X}$. While the spaces $H^0(X, L_j\otimes L_i^{-1})$ have no preferred basis, the restriction to $X$ of the torus-invariant sections of the sheaf $(E_j\otimes E_i^{\vee})^{\vee\vee}$ provide a spanning set for $H^0(X, L_j\otimes L_i^{-1})$ that we use to build the paths in the quiver $Q$ from vertex $i$ to vertex $j$. That is, we \emph{define} the quiver $Q$ using differences of sections of bundles from the collection $\widetilde{\mathscr{L}}$, and we introduce an ideal of relations $J_\mathscr{L}$ in $\kk Q$ for which $\kk Q/J_\mathscr{L}\cong \End(\bigoplus_{0\leq i\leq r}L_i)$. Our first main result describes the rational map 
 \[
 \varphi_{\vert\mathscr{L}\vert}\colon X\dashrightarrow \vert\mathscr{L}\vert
 \]
obtained by evaluating sections, and establishes the link between the collection $(\mathscr{W}_0, \dots, \mathscr{W}_r)$ of tautological lines bundles on the fine moduli space $\vert \mathscr{L}\vert$ and the collection $\mathscr{L}$ of line bundles on $X$ (see Section~\ref{sec:theorem1} for the proof):

\begin{theorem}
\label{thm:1}
For a collection $\mathscr{L}=(\mathscr{O}_X,L_1,\dots, L_r)$ of distinct, effective line bundles on $X$, the map $\varphi_{\vert\mathscr{L}\vert}\colon X\dashrightarrow \vert\mathscr{L}\vert$ is a morphism if and only of each $L_i$ is basepoint-free. Moreover, the image is presented explicitly as a geometric quotient, and the tautological bundles on $\vert \mathscr{L}\vert$ satisfy $\varphi_{\vert\mathscr{L}\vert}^*(\mathscr{W}_i)=L_i$.
\end{theorem}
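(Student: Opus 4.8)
The strategy is to transfer the GIT description of the toric quiver variety $\vert\mathscr{L}\vert$ across the closed immersion $X\hookrightarrow\XX$ coming from the Mori Dream Space structure. Write $\XX$ as a geometric quotient of an open subset of $\Spec\kk[x_\rho]$ by $G=\operatorname{Hom}(\Cl\XX,\kk^\times)$, where $\kk[x_\rho]$ is its Cox ring with variables indexed by the rays of the fan, and write $X=(\Spec(\Cox X)\setminus Z_X)/G$ for the closed subscheme $\Spec(\Cox X)\subseteq\Spec\kk[x_\rho]$ defined by the homogeneous ideal presenting $\Cox X$, with $Z_X$ the irrelevant locus. Recall that $\vert\mathscr{L}\vert$ is the geometric quotient of the $\vartheta$-stable locus $\mathcal{W}^{\mathrm{s}}\subseteq\AAA^{Q_1}$ by the torus $T=(\kk^\times)^{Q_0}/\kk^\times$ acting on the arrow coordinate $w_a$ through $t_{\head(a)}t_{\tail(a)}^{-1}$, and that the tautological bundle $\mathscr{W}_i$ is the descent of the trivial bundle linearised by the character $e_i-e_0$, normalised so that $\mathscr{W}_0\cong\mathscr{O}_{\vert\mathscr{L}\vert}$. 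By construction each arrow $a$ of $Q$ carries a label equal to the image in $\Cox X$ of a monomial $x^{u_a}$ of degree $[D_{\head(a)}]-[D_{\tail(a)}]$ in $\Cl\XX$, where $E_i=\mathscr{O}_{\XX}(D_i)$ is the rank one reflexive hull restricting to $L_i$. These monomials assemble into a morphism $\beta\colon\Spec\kk[x_\rho]\to\AAA^{Q_1}$ equivariant for the homomorphism $G\to(\kk^\times)^{Q_0}$ dual to $e_i\mapsto[D_i]$, and the first thing to check is that the descent of $\beta|_{\Spec(\Cox X)}$ recovers the map $\varphi_{\vert\mathscr{L}\vert}$ of \cite[Theorem~5.4]{Craw} defined by evaluating sections.

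Granting this, the equivalence with basepoint-freeness is a stability computation. A subrepresentation of a representation $w$ with dimension vector $(1,\dots,1)$ is the full subrepresentation on a subset $S\subseteq Q_0$ that is closed under every arrow $a$ with $w_a\neq0$; since $\vartheta=(-r,1,\dots,1)$ one checks that $\vartheta(S)>0$ for every proper nonempty such $S$ if and only if the only such $S$ containing vertex $0$ is $Q_0$, that is, every vertex $j$ is reachable from $0$ along a directed path of arrows with nonzero values. Evaluating labels at a point $x\in X$, such a path exists for a given $j$ precisely when the product of its labels is nonzero at $x$; this product lies in $H^0(X,L_j)$ because $L_0=\mathscr{O}_X$, and the isomorphism $\kk Q/J_\mathscr{L}\cong\End(\bigoplus_{0\le i\le r}L_i)$ shows that the path products from $0$ to $j$ span $H^0(X,L_j)$. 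Consequently $\beta(x)$ is $\vartheta$-stable for every $x$ if and only if each $L_j$ is basepoint-free, while if $x$ is a basepoint of some $L_j$ then $\beta(x)\notin\mathcal{W}^{\mathrm{s}}$ and $\varphi_{\vert\mathscr{L}\vert}$ is undefined there; this gives the first assertion.

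Assume now that every $L_i$ is basepoint-free. Then $(\mathscr{O}_X,L_1,\dots,L_r)$ together with the homomorphisms $L_i\to L_j$ given by the labels is a flat family of $\vartheta$-stable representations of $Q$ over $X$; the induced classifying morphism $X\to\vert\mathscr{L}\vert$ coincides with the evaluation map $\varphi_{\vert\mathscr{L}\vert}$, and pulling back the universal family yields $\varphi_{\vert\mathscr{L}\vert}^*(\mathscr{W}_i)=L_i$. For the image, $X$ is projective so $\operatorname{Image}(\varphi_{\vert\mathscr{L}\vert})$ is closed in $\vert\mathscr{L}\vert$; since $\varphi_{\vert\mathscr{L}\vert}$ is the descent of $\beta|_{\Spec(\Cox X)\setminus Z_X}$ and $T$ acts with trivial stabilisers on $\mathcal{W}^{\mathrm{s}}$ for this $\vartheta$, the image is the geometric quotient $\mathcal{V}/T$, where $\mathcal{V}\subseteq\mathcal{W}^{\mathrm{s}}$ is the open subset swept by the $T$-orbits meeting $\beta\bigl(\Spec(\Cox X)\setminus Z_X\bigr)$; this presents the image explicitly in terms of the incidence of $Q$ and the defining ideal of $\Cox X$ inside $\kk[x_\rho]$.

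The stability bookkeeping of the second paragraph is routine; the crux is the dictionary asserted at the end of the first. One has to verify that the restrictions to $X$ of the chosen torus-invariant sections of the reflexive hulls on $\XX$ genuinely span, rather than only generate up to saturation, each space $H^0(X,L_j\otimes L_i^{-1})$, that their monomial syzygies generate $J_\mathscr{L}$, and that the passage from $\Spec\kk[x_\rho]$ to the closed subscheme $\Spec(\Cox X)$ is compatible with the formation of the stable loci defining $\XX$ and $\vert\mathscr{L}\vert$ — in particular that $\beta^{-1}(\mathcal{W}^{\mathrm{s}})$ contains $\Spec(\Cox X)\setminus Z_X$ exactly when the $L_i$ are basepoint-free, and that $Z_X$ contributes no spurious stable orbits to $\mathcal{V}$. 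The case $X=\XX$ is \cite{CrawSmith}; the new content is controlling this restriction to the closed subvariety defined by the Cox ring relations of $X$.
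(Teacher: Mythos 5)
Your core argument tracks the paper's own proof closely: the paper also defines $\varphi_{\vert\mathscr{L}\vert}$ as the descent of the equivariant map $\Phi^*\colon \mathbb{V}(I_X)\to\mathbb{A}^{Q_1}_\kk$ induced by $y_a\mapsto x^{\div(a)}$, and its Proposition~\ref{prop:morphism} runs exactly your stability computation (a destabilising subrepresentation must contain vertex $0$ and miss some $i>0$ reachable only by paths evaluating to zero), converted into basepoint-freeness via the spanning fact $e_i(\kk Q/J_\mathscr{L})e_0\cong H^0(X,L_i)$ of Proposition~\ref{prop:algebra} --- precisely the ``dictionary'' you flag in your last paragraph, which in the paper's setup is immediate because $\tau_{ij}$ is the restriction of the surjective graded map $\tau$ to a graded piece. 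For the tautological bundles you take a genuinely different (and valid) route: you invoke the universal property of the fine moduli space and read off $\varphi^*_{\vert\mathscr{L}\vert}(\mathscr{W}_i)=L_i$ from the classifying map of the family $(L_0,\dots,L_r)$, whereas the paper computes directly with the isomorphism $\Wt(Q)\to\Pic(\vert\mathscr{L}\vert)$ and diagram \eqref{eqn:equivariant}, getting $(\psi\circ\pic)(\chi_i-\chi_0)=L_i$. Your argument is cleaner conceptually; the paper's is more elementary and avoids having to normalise the universal family.

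The one place you fall short of the statement is the ``explicit presentation'' of the image. Describing the image as $\mathcal{V}/T$ for ``the open subset swept by the $T$-orbits meeting $\beta(\Spec(\Cox X)\setminus Z_X)$'' is both imprecise and slightly wrong: since $X$ is projective the image is closed in $\vert\mathscr{L}\vert$, so its preimage in the stable locus is a \emph{closed} $T$-invariant subset, not an open one; and this description is tautological rather than explicit. The paper's Proposition~\ref{prop:morphismbundles}\one\ does the real work here: it identifies the image with $\mathbb{V}(I_Q)\git_\vartheta G$, where $I_Q$ is the ideal generated by the $\Wt(Q)$-homogeneous elements of $\Ker(\Phi)$ --- a concrete, computable ideal (via \eqref{eqn:Psibar}), prime because $\Cox(X)$ is a UFD. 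The key observation you are missing is that $\mathbb{V}(I_Q)$ is the minimal $G$-invariant algebraic set containing every $G$-orbit that meets $\mathbb{V}(\Ker(\Phi))$, which is exactly how one converts the non-invariant image of $\Phi^*$ into an invariant subscheme whose geometric quotient is the image of $\varphi_{\vert\mathscr{L}\vert}$. Without some such identification the clause ``the image is presented explicitly as a geometric quotient'' is not established, and this explicit ideal is needed downstream (Theorem~\ref{thm:surjective} compares $I_Q$ with the saturation of $I_{\mathscr{L}}$), so it cannot be waved away.
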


\noindent In addition, we provide a necessary and sufficient condition (that is easy to implement) for the morphism $\varphi_{\vert\mathscr{L}\vert}\colon X\rightarrow \vert\mathscr{L}\vert$ to be a closed immersion, thereby improving upon a sufficient condition in the toric case by Craw-Smith~\cite[Corollary~4.10]{CrawSmith}. A straightforward application of multigraded regularity introduced by Hering--Schenck--Smith~\cite{HSS} (see also Maclagan--Smith~\cite{MaclaganSmith}) provides an efficient way to exhibit many collections that give rise to closed immersions. If each $L_i$ on $X$ is the restriction of a basepoint-free line bundle on $\widetilde{X}$ then the morphism from Theorem~\ref{thm:1} is simply the restriction of that from \cite[Theorem~1]{CrawSmith}. This is rarely the case however, because the nef cone of $X$ is typically the union of the nef cones of a finite collection of ambient toric varieties.

Our second main result is more algebraic, and provides a fine moduli description of $X$ via the noncommutative algebra $A:=\kk Q/J_\mathscr{L}$. Since $\kk Q$ is constructed from sections of sheaves on the ambient toric variety rather than sections of sheaves on $X$, one does not expect a strong link between $X$ and the multigraded linear series until the geometry is influenced by the ideal of relations $J_\mathscr{L}$.  To this end,  the ideal $J_\mathscr{L}$ defines an ideal $I_\mathscr{L}$ in the Cox ring of $\vert\mathscr{L}\vert$, and the subscheme cut out by this ideal is the fine moduli space $\mathcal{M}_\vartheta(\modAL)$ of $\vartheta$-stable $A$-modules with dimension vector $(1, \dots, 1)$. This subscheme contains the image of the morphism $\varphi_{\vert \mathscr{L}\vert}$ from Theorem~\ref{thm:1}. This inclusion is proper in general, but by saturating $I_\mathscr{L}$ with the irrelevant ideal for the GIT quotient construction of $\vert\mathscr{L}\vert$ and by comparing the result with the ideal $I_Q$ that cuts out the image of $\varphi_{\vert \mathscr{L}\vert}$, we obtain the following (see 
Section~\ref{sec:theorem2}):

 \begin{theorem}
 \label{thm:2}
 For any Mori Dream Space $X$, there exist (many) collections $\mathscr{L}$ on $X$ such that the morphism $\varphi_{\vert\mathscr{L}\vert} \colon X \to \vert\mathscr{L}\vert$ identifies $X$ with the fine moduli space $\mathcal{M}_\vartheta(\modAL)$, and the tautological line bundles on $\mathcal{M}_\vartheta(\modAL)$ coincide with the line bundles of $\mathscr{L}$. 
 \end{theorem}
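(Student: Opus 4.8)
The plan is to select the collection $\mathscr{L}$ so that, simultaneously: each $L_i$ is basepoint-free; the morphism $\varphi_{|\mathscr{L}|}\colon X\to|\mathscr{L}|$ of Theorem~\ref{thm:1} is a closed immersion; and the ideal of quiver relations $J_\mathscr{L}$ is large enough that the induced ideal $I_\mathscr{L}$ in the Cox ring $S$ of $|\mathscr{L}|$ recovers, after saturation by the irrelevant ideal $B\subseteq S$, the ideal $I_Q$ that cuts out $\varphi_{|\mathscr{L}|}(X)$. The first two conditions are controlled by Theorem~\ref{thm:1} and the closed-immersion criterion following it; for the third I would invoke multigraded Castelnuovo--Mumford regularity in the sense of Hering--Schenck--Smith~\cite{HSS} (see also Maclagan--Smith~\cite{MaclaganSmith}). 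Since $X$ is a Mori Dream Space, $\Cox(X)$ is finitely generated, hence generated and presented in bounded multidegree; thus it suffices to take $\mathscr{L}$ to contain a set of bundles whose difference sheaves $L_j\otimes L_i^{-1}$ carry an algebra generating set for $\Cox(X)$ and whose multidegrees are pushed far enough into the regular region, and any further enlargement still works. This is exactly the source of the ``(many)'' in the statement.

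Granting such a choice, the heart of the argument is the scheme-theoretic equality $\mathcal{M}_\vartheta(\modAL)=\varphi_{|\mathscr{L}|}(X)$ inside $|\mathscr{L}|$. Both are closed subschemes of the toric variety $|\mathscr{L}|$, defined by the homogeneous ideals $I_\mathscr{L}$ and $I_Q$ in $S$; since a closed subscheme of a toric variety is determined by the $B$-saturation of its ideal, the two subschemes coincide precisely when $(I_\mathscr{L}:B^\infty)=(I_Q:B^\infty)$, and $I_Q$ may be taken $B$-saturated. The inclusion $I_\mathscr{L}\subseteq I_Q$ is immediate: each relation in $J_\mathscr{L}$ is an identity among composites of sections inside $\End(\bigoplus_{0\le i\le r}L_i)$, so it pulls back to an identity in $\Cox(X)$; this recovers the containment $\varphi_{|\mathscr{L}|}(X)\subseteq\mathcal{M}_\vartheta(\modAL)$ already observed. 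The opposite inclusion $I_Q\subseteq(I_\mathscr{L}:B^\infty)$ is the real content, and I would prove it one affine toric chart of $|\mathscr{L}|$ at a time. On a chart corresponding to a spanning tree of the quiver, trivialising the tautological bundles along the tree turns the relations in $J_\mathscr{L}$ into expressions for the remaining arrow-coordinates as values of sections of the appropriate difference sheaves; because $\mathscr{L}$ was chosen so that such sections generate $\Cox(X)$ in the relevant multidegrees and their relations are generated there by the quadratic path relations, the localisation of $S/I_\mathscr{L}$ over that chart is exactly the coordinate ring of $\varphi_{|\mathscr{L}|}(X)$ over that chart, i.e. the localisation of $S/I_Q$. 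Gluing over the charts gives $(I_\mathscr{L}:B^\infty)=I_Q$.

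The two assertions of the theorem now follow. Since $\varphi_{|\mathscr{L}|}$ is a closed immersion for the chosen $\mathscr{L}$, it identifies $X$ with its image, and that image is $\mathcal{M}_\vartheta(\modAL)$ by the equality just established; thus $\varphi_{|\mathscr{L}|}$ realises $X$ as $\mathcal{M}_\vartheta(\modAL)$. For the tautological bundles, observe that an $A$-module of dimension vector $(1,\dots,1)$ is precisely a representation of $Q$ of that dimension vector satisfying $J_\mathscr{L}$, so the tautological family on $\mathcal{M}_\vartheta(\modAL)$ is the restriction of the tautological family $(\mathscr{W}_0,\dots,\mathscr{W}_r)$ on $|\mathscr{L}|$; pulling the restricted bundles back along the isomorphism $X\xrightarrow{\ \sim\ }\mathcal{M}_\vartheta(\modAL)$ induced by $\varphi_{|\mathscr{L}|}$ and invoking $\varphi_{|\mathscr{L}|}^{*}\mathscr{W}_i=L_i$ from Theorem~\ref{thm:1} shows that, under this identification, they are $L_0=\mathscr{O}_X,L_1,\dots,L_r$.

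The step I expect to be the genuine obstacle is the inclusion $I_Q\subseteq(I_\mathscr{L}:B^\infty)$, equivalently the surjectivity of $X\to\mathcal{M}_\vartheta(\modAL)$: this is where the extra hypotheses on $\mathscr{L}$ are unavoidable, since for a small or insufficiently positive collection the relations $J_\mathscr{L}$ detect only part of the defining equations of $X$ and the inclusion $\varphi_{|\mathscr{L}|}(X)\subsetneq\mathcal{M}_\vartheta(\modAL)$ is strict. Pinning down ``large enough'' and extracting the precise regularity bounds that force the chart-wise comparison of $I_\mathscr{L}$ and $I_Q$ is the technical crux of the proof.
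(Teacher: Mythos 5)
Your high-level strategy coincides with the paper's: reduce to the saturation statement $(I_{\mathscr{L}}:B_Q^\infty)=I_Q$ (this is exactly Theorem~\ref{thm:surjective}), note that $I_{\mathscr{L}}\subseteq I_Q$ is formal, and locate all the difficulty in the reverse inclusion. The easy inclusion and the final two paragraphs (identification of $X$ with the moduli space, and the tautological bundles via $\varphi_{\vert\mathscr{L}\vert}^*\mathscr{W}_i=L_i$) are fine. But there is a genuine gap: the inclusion $I_Q\subseteq(I_{\mathscr{L}}:B_Q^\infty)$, which you yourself flag as ``the technical crux'', is precisely the content of the theorem, and your chart-wise sketch does not contain the ideas needed to close it. The assertion that on the chart of a spanning tree ``the relations are generated there by the quadratic path relations'' is an unproven claim essentially equivalent to what must be shown. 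The specific obstruction is that the generators of $I_{\mathscr{L}}$ in \eqref{eqn:IL} are not arbitrary $\Wt(Q)$-homogeneous elements of $\Ker(\Phi)$: they are combinations $\sum_{p\in\Gamma}c_p y_p$ indexed by \emph{paths} sharing a common head and tail, with $\widetilde{\Phi}\bigl(\sum_p c_p y_p\bigr)\in I_X$. A general homogeneous $f\in I_Q$ is a combination of monomials $y^{\mathbf{v}}$ of a common weight that need not be path monomials at all, and the entire difficulty is to rewrite $(y_{\mathcal{T}})^N f$, modulo the ambient toric relations, as a polynomial combination of genuine path relations. ``Sections generate $\Cox(X)$ in the relevant multidegrees'' does not by itself produce such a rewriting.

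The paper's proof supplies four ingredients that your proposal lacks. First, the collection is built on the ambient toric variety $\widetilde{X}$, not on $X$: the top two bundles are $E_{r-1}=E^{\delta}$ and $E_r=E^{2\delta}$ where $E$ is normally generated and $\mathscr{O}_{\widetilde{X}}$-regular and $2\delta$ dominates the total degrees of a chosen generating set of $I_X$ (see \eqref{eqn:delta} and \eqref{eqn:finelist}); this degree bound is what later lets one absorb each term $g_{i,k}$ of a generator of $I_X$ into a single path monomial from $0$ to $r$. Second, Lemma~\ref{lem:hardlemma} converts every weight-$\chi$ monomial $y^{\mathbf{v}}$, after multiplication by $(y_{\mathcal{T}})^{2n_\chi}$, into $y^{\mathbf{m}}\prod_\alpha y_{\gamma_\alpha}$ with the $\gamma_\alpha$ paths from $0$ to $r$ and with $\mathbf{m}$ independent of $\mathbf{v}$; this uniformity is what keeps the element of $I_X$ under control through the rewriting. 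Third, multigraded regularity is used twice more (Steps 2 and 3) to refactor products of sections so that a common monomial $y^{\mathbf{m}'_{i,j}}$ can be extracted, leaving single-path relations $\sum_k c_{i,j,k}y_{q_{i,j,k}}$ with image in $I_X$ (plus a small argument that $I_X$ contains no monomial). Fourth, the conclusion is only that $(y_{\mathcal{T}})^{2n_\chi}f\in I_{\widetilde{Q}}+I_{\mathscr{L}}$, and one still needs the toric saturation $I_{\widetilde{Q}}=(I_{\widetilde{\mathscr{L}}}:B_Q^\infty)$ from Craw--Smith \eqref{eqn:toricsaturation} to dispose of the ambient binomial relations $I_{\widetilde{Q}}$; your sketch never addresses the discrepancy between $I_{\widetilde{Q}}$ and $I_{\widetilde{\mathscr{L}}}\subseteq I_{\mathscr{L}}$. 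Without these steps the argument does not go through, so as written the proposal identifies the correct reduction but does not prove the theorem.
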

  
\noindent The analogous result for toric varieties \cite[Theorem~1.2]{CrawSmith} relies heavily on lattice-theoretic and combinatorial techniques that are unavailable for Mori Dream Spaces. Our proof of Theorem~\ref{thm:2} combines a careful analysis of paths in the quiver of sections together with several invocations of multigraded regularity. 

\smallskip

 More generally, whenever the morphism $\varphi_{\vert\mathscr{L}\vert} \colon X \to \vert\mathscr{L}\vert$ is a closed immersion it identifies $X$ with $\mathcal{M}_\vartheta(\modAL)$ when the saturation of $I_\mathscr{L}$ by the irrelevant ideal coincides with the ideal $I_Q$. These ideals can be computed explicitly in any given example, so it is possible to check directly whether Theorem~\ref{thm:2} holds (subject to computational limitations).  The resulting geometric quotient constructions of $X$ are new, and while it is impossible to improve upon the Hu--Keel construction of $X$  from the birational point of view, it is sometimes possible to encode more refined information of $X$ via $\mathscr{L}$, such as its bounded derived category of coherent sheaves

 As an application of our results we illustrate a quiver moduli construction that encodes the bounded derived category of coherent sheaves for a pair of del Pezzo surfaces in which $\mathscr{L}$ is chosen to be a full, strongly exceptional collection of basepoint-free line bundles, that is, the line bundles in  $\mathscr{L}$ freely generate the bounded derived category of coherent sheaves on $X$.  In each case, we deduce from Theorem~\ref{thm:1} and its corollaries that  $\varphi_{\vert\mathscr{L}\vert} \colon X \to \vert\mathscr{L}\vert$ is a closed immersion and, moreover,  we compute that the saturation of $I_\mathscr{L}$ by the irrelevant ideal coincides with the ideal $I_Q$ as in Theorem~\ref{thm:2}. It follows in each case that the del Pezzo surface $X$ is isomorphic to the fine moduli space $\mathcal{M}_\vartheta(\modAL)$, and the tautological bundles on the moduli space correspond under this isomorphism to the line bundles in $\mathscr{L}$. Until now it was known only that each del Pezzo surface $X$ is isomorphic to a connected component of a fine moduli space $\mathcal{M}_\vartheta(\modAL)$, see Bergman--Proudfoot~\cite{BergmanProudfoot}.
 
   \smallskip
  
We now describe the structure of the paper. Section~\ref{sec:2} defines the bound quiver of sections for collections of line bundles on a Mori Dream Space.  The morphism to the multigraded linear series and the geometric results leading to Theorem~\ref{thm:1} are introduced in Section~\ref{sec:3}, while the ideals of relations and the algebraic results leading to Theorem~\ref{thm:2} appear in Section~\ref{sec:4}. Finally, our application to tilting bundles on del Pezzo surfaces is presented in Section~\ref{sec:5}.
 
 \medskip

\noindent\textbf{Conventions.}
 Write $\kk$ for an algebraically closed field of characteristic zero, $\kk^\times$ for the one-dimensional algebraic torus over
 $\kk$, and $\NN$ for the semigroup of nonnegative integers. Given a vector space $V$, we write $\mathbb{P}^*(V)$ for the projective space of hyperplanes in $V$. For $d>0$ and $\mathbf{u}:=(u_1,\dots, u_d)\in \NN^d$ we adopt multiindex notation $x^{\mathbf{u}}:=\prod_{1\leq i\leq d} x_i^{u_i}\in \kk[x_1,\dots, x_d]$.
       
\medskip
\noindent\textbf{Acknowledgements.}  Special thanks to Tarig Abdelgadir and Arend Bayer for useful conversations, and to the anonymous referee for helpful comments. Thanks also to J\"{u}rgen Hausen, Markus Perling and Diane Maclagan. This work forms part of the second author's EPSRC-funded PhD thesis. The first author was supported in part by EPSRC grant EP/G004048.

 \section{Quivers of sections on Mori Dream Spaces}
 \label{sec:2}
 We begin by establishing our notation and introducing the bound quiver of sections for a collection of line bundles on a Mori Dream Space. These bound quivers encode the endomorphism algebra of the direct sum of the sheaves in the collection. 
 
 \subsection{Mori Dream Spaces}
 Let $X$ be a projective $\QQ$-factorial variety with divisor class group $\Cl(X)$ that is finitely generated and free of rank $\rho$. Let $D_1,\dots, D_\rho$ be Weil divisors whose classes provide an integral basis of $\Cl(X)$. Assume that the $\ZZ^\rho$-graded \emph{Cox ring}
 \[ 
 \Cox(X) := \bigoplus_{(m_{1},\ldots,m_{\rho})\in \ZZ^\rho} H^{0}\big(X, \mathscr{O}_X(m_1D_1+\dots + m_\rho D_\rho)\big)
 \]
 is a finitely generated $\kk$-algebra, where the multiplication is induced by multiplication of global sections. A different choice of integral basis for the divisor class group yields (non-canonically) an isomorphic Cox ring for $X$. Since $\Cl(X)$ is a finitely generated and free abelian group, $\Cox(X)$ is a unique factorisation domain by Elizondo--Kurano--Watanabe~\cite{EKW}. The main result of Hu--Keel~\cite[Proposition 2.9]{HuKeel} shows that finite generation of $\Cox(X)$ is equivalent to $X$ being a \emph{Mori Dream Space} (see \cite{HuKeel} for the definition that justifies the terminology). 
 
 Choose once and for all a presentation
 \begin{equation}
 \label{eqn:CoxX}
 \Cox(X) = \kk[x_{1},\ldots.x_{d}]/ I_{X}
 \end{equation}
 for some $d\in \NN$. The $\ZZ^\rho$-grading from $\Cox(X)$ can be lifted via the quotient map 
 \begin{equation}
 \label{eqn:HuKeelmap}
 \tau\colon   \kk[x_1,\dots, x_d]  \longrightarrow  \Cox(X).
 \end{equation}
to obtain a $\ZZ^\rho$-grading of $\kk[x_{1},\ldots,x_{d}]$ that makes $\tau$ a $\ZZ^\rho$-graded homomorphism of $\kk$-algebras. This defines a semigroup homomorphism $\deg\colon \NN^d\to \ZZ^\rho$ and an action of the algebraic torus $T:=\Hom(\ZZ^\rho, \kk^{\times})$ on $\mathbb{A}^{d}_{\kk}$.  Choose $\chi\in T^*=\ZZ^\rho$ such that every $\chi$-semistable point of $\mathbb{A}^d_\kk$ is $\chi$-stable and, moreover, such that a $\chi$-stable point exists (that is, we assume $\chi$ lies in an open GIT chamber for the $T$-action on $\mathbb{A}^d_\kk$).  Then the radical monomial ideal 
 \[
 B = \rad( x^{\mathbf{u}} \in \kk[x_1,\dots, x_d] : \deg(\mathbf{u})=m\chi \text{ for some } m>0)
 \]
cuts out the $\chi$-unstable locus in $\mathbb{A}^d_\kk$, and the geometric quotient $\mathbb{A}^d_\kk\git_\chi T := (\mathbb{A}^d_\kk\setminus \mathbb{V}(B))/T$ of the open subscheme of $\chi$-stable points in $\mathbb{A}^d_\kk$ by $T$ is a projective $\QQ$-factorial toric variety.

 The action of $T$ restricts to an action on $\Spec(\Cox(X))\cong \mathbb{V}(I_X)$ since $\tau$ is $\ZZ^\rho$-graded. Choose $\chi\in T^*$ that is ample on $X$ (so $\chi$-stable points exist), and assume that every $\chi$-semistable point of $\mathbb{A}^d_\kk$ is $\chi$-stable. The $\chi$-unstable locus in $\mathbb{V}(I_X)$ is cut out by the ideal $ B_X:= \rad(I_X+B)$. Since $\chi$ is ample on $X$, Hu--Keel~\cite[Proposition 2.11]{HuKeel} implies that the geometric quotient $\mathbb{V}(I_X)\git_\chi T:= (\mathbb{V}(I_X)\setminus \mathbb{V}(B_X))/T$ of the open subscheme of $\chi$-stable points by the action of $T$ is isomorphic to $X$. Since $\mathbb{V}(I_X)\subseteq \mathbb{A}^d_\kk$ is closed, there is a closed immersion
 \[
 \mathbb{V}(I_X)\git_\chi T\hookrightarrow \mathbb{A}^d_\kk\git_\chi T
 \]
 that embeds the Mori Dream Space $X$ into what we call the  \emph{ambient toric variety} $\widetilde{X}_\chi:=\mathbb{A}^d_\kk\git_\chi T$. In fact a choice is required here: the GIT chamber decomposition for the action of $T$ on $\mathbb{A}^d_\kk$ refines that for the action on $\Spec(\Cox(X))$, so the nef cone of $X$ is the union of nef cones of finitely many ambient toric varieties. Nevertheless,  any two such are isomorphic in codimension-one, and our construction requires only knowledge of the divisors in $\widetilde{X}_\chi$. Hereafter we choose one such ambient toric variety $\widetilde{X}:=\widetilde{X}_\chi$ and suppress the dependence on $\chi$.

 Every Mori Dream Space can be constructed from a bunched ring in the sense of Berchtold--Hausen~\cite[Definition~2.10, Theorem 4.2]{BerchtoldHausen}. It follows that the restriction of rank one reflexive sheaves from the ambient toric variety $\widetilde{X}$ to $X$ induces an isomorphism 
   \begin{equation}
  \label{eqn:psi}
  \psi \colon \Cl(\widetilde{X}) \longrightarrow \Cl(X)=\ZZ^\rho
  \end{equation}
 on divisor class groups, see \cite[Proposition~5.2]{BerchtoldHausen}.
     
 \begin{example}
 \label{exa:X4}
 Let $X_4$ be a del Pezzo surface obtained as the blow-up of $\mathbb{P}_\kk^2$ at four points in general position. The Picard group $\ZZ^5$ has a basis given by $H$, the pullback to $X_4$ of the hyperplane class on $\mathbb{P}^2_\kk$,  together with the four exceptional curves $R_1, R_2, R_3, R_4$. The semigroup homomorphism $\deg\colon \NN^{10}\to \ZZ^5$ obtained as multiplication by the matrix
  \[ \left[ \text{\footnotesize 
 $\begin{array}{rrrrrrrrrr}
  0 & 0 & 0 & 0 & 1 & 1 & 1 & 1 & 1 & 1 \\1 & 0 & 0 & 0 & -1 & -1 & -1 & 0 & 0 & 0 \\0 & 1 & 0 & 0 & -1 & 0 & 0 & -1 & -1 & 0 \\0 & 0 & 1 & 0 & 0 & -1 & 0 & -1 & 0 & -1 \\0 & 0 & 0 & 1 & 0 & 0 & -1 & 0 & -1 &-1
 \end{array}$
 } \right]
 \]   
induces the $\ZZ^5$-grading of $\kk[x_1,\dots, x_{10}]$, and the $T$-homogeneous ideal
 \[
 I_{X_{4}}:=\left(\begin{array}{c} x_{2}x_{5}-x_{3}x_{6}+x_{4}x_{7}, \;\; x_{1}x_{5}-x_{3}x_{8}+x_{4}x_{9} \\
  x_{1}x_{6}-x_{2}x_{8}+x_{4}x_{10}, \;\; x_{1}x_{7}-x_{2}x_{9}+x_{3}x_{10},\;\; x_{5}x_{10}-x_{6}x_{9}+x_{7}x_{8}\end{array}\right)
 \]
determines $\Cox(X_4)=\kk[x_1,\dots, x_{10}]/I_{X_4}$ following Batyrev--Popov~\cite{BatyrevPopov}. The ample linearisation $-K_{X_4} = 3H-R_1-R_2-R_3-R_4$ is a natural choice to construct an ambient toric variety,  but as Laface--Velasco~\cite[Example 2.11]{LafaceVelasco} note, this defines a non-$\QQ$-factorial toric quotient so it must lie in a GIT wall for the action of $T$ on $\mathbb{A}^{10}_\kk$.  However, the same authors also note that $\chi=11H-5R_1-3R_2-2R_3-R_4\in \ZZ^5$ is ample on $X$ and $\mathbb{A}^{10}\git_{\chi} T$ is $\QQ$-factorial so 
it lies in an open GIT chamber for the action of $T$ on $\mathbb{A}^{10}_\kk$. We therefore choose $\widetilde{X_4}:= \mathbb{A}^{10}\git_{\chi} T$.
\end{example}

 \subsection{Quivers and path algebras} 
 We now introduce notation for quivers and path algebras. 
 
 Let $Q$ be a finite connected quiver with vertex set $Q_0$, arrow set $Q_1$, and maps $\head, \tail \colon Q_1 \to Q_0$ indicating the head and tail of each arrow.  The characteristic functions $\chi_{i} \colon Q_0 \to \ZZ$ for $i \in Q_0$ and $\chi_{a} \colon Q_1 \to \ZZ$ for $a \in Q_1$ form the standard integral bases of the vertex space $\ZZ^{Q_0}$ and the arrow space $\ZZ^{Q_1}$ respectively.  A nontrivial path in $Q$ is a sequence of arrows $p = a_k \dotsb a_1$ with $\head(a_{j}) = \tail(a_{j+1})$ for $1 \leq j < k$.  Set $\tail(p) = \tail(a_{1}), \head(p) = \head(a_k)$ and $\supp(p)=\{a_1,\dots, a_k\}$.  Each $i \in Q_0$ gives a trivial path $e_i$ where $\tail(e_i) = \head(e_i) = i$.  The path algebra $\kk Q$ is the $\kk$-algebra whose underlying $\kk$-vector space has a basis of paths in $Q$, where the product of basis elements is the basis element defined by concatenation of the paths if possible, or zero otherwise.  A cycle is a path $p$ with $\tail(p) = \head(p)$, and a quiver is acyclic if every cycle is a trivial path $e_i$ for some vertex $i\in Q_0$.   A vertex $i\in Q_0$ is a source of $Q$ if it is not the head of an arrow. A spanning tree in $Q$ is a connected acyclic subquiver whose vertex set coincides with that of $Q$, and a spanning tree has root at $i\in Q_0$ if $i$ is the unique source of the tree.
   
   A relation in a quiver $Q$ is a $\kk$-linear combination of paths that share the same head and tail. Thus, each relation is of the form $\sum_{p\in \Gamma} c_p p$ for $c_p\in \kk$, where $\Gamma$ is a finite set of paths that share the same head and the same tail.  We do not assume that paths in $\Gamma$ traverse at least two arrows.  Any set of relations generates a two-sided ideal $J\subseteq \kk Q$, and the pair $(Q,J)$ is called a quiver with relations or, more simply, a \emph{bound quiver}. 
   
\subsection{Quivers of sections}  For $r\geq 0$, consider a collection of distinct line bundles
  \[
  \mathscr{L}:=(L_0,L_1,\dots,L_r)\subset \ZZ^\rho = \Cl(X)
  \]
  on the Mori Dream Space $X$, where $L_0=\mathscr{O}_X$ and $L_1, \dots, L_r$ are effective. For $0\leq i\leq r$, define $E_i\in \psi^{-1}(L_i)$ using the isomorphism $\psi\colon \Cl(\widetilde{X})\to \Cl(X)=\ZZ^\rho$ from \eqref{eqn:psi} to obtain a collection 
  \[
  \widetilde{\mathscr{L}}:= (E_0,E_1,\dots,E_r)
  \]
of distinct rank one reflexive sheaves on an ambient toric variety $\widetilde{X}$, where $E_0=\mathscr{O}_{\widetilde{X}}$. Since $\widetilde{X}$ is normal, each sheaf $E_i$ is of the form $\mathscr{O}_{\widetilde{X}}(D)$ for some $D\in \Cl(\widetilde{X})$. For $0\leq i\leq r$, define $D_i^\prime\in \Cl(\widetilde{X})$ such that $E_i=\mathcal{O}_{\widetilde{X}}(D_i^\prime)$. Writing $E^\vee:=\mathscr{H}om_{\mathscr{O}_{\widetilde{X}}}(E,\mathscr{O}_{\widetilde{X}})$ for the dual sheaf gives 
 \begin{equation}
 \label{eqn:doubledual}
 (E_j\otimes E_i^\vee)^{\vee\vee}\cong \mathcal{O}_{\widetilde{X}}(D_j^\prime-D_i^\prime)
 \end{equation}
for $0\leq i,j\leq r$, see for example Cox--Little--Schenck~\cite[Proposition 8.0.6]{CLS} (compare also Example~8.0.5 from \emph{loc.cit.}\ to see the necessity for using the double dual). For $0\leq i,j\leq r$, we say that a torus--invariant section $s \in H^0(\widetilde{X}, (E_j\otimes E_i^\vee)^{\vee\vee})$ is \emph{irreducible} if the section does not lie in the image of the multiplication map 
\begin{equation}
\label{eqn:multmapqos}
H^0\big(\widetilde{X},\mathcal{O}_{\widetilde{X}}(D_j^\prime-D_k^\prime)\big)\otimes_\kk H^0\big(\widetilde{X},\mathcal{O}_{\widetilde{X}}(D_k^\prime-D_i^\prime)\big)\longrightarrow H^0\big(\widetilde{X},\mathcal{O}_{\widetilde{X}}(D_j^\prime-D_i^\prime)\big)
\end{equation}
for any $k\neq i,j$, where we use isomorphism \eqref{eqn:doubledual}. The following extends the notion of a quiver of sections for a collection of line bundles on a projective toric variety due to Craw--Smith~\cite{CrawSmith}.
    
\begin{definition}
\label{def:qos}
The \emph{quiver of sections} of the collection $\mathscr{L}$ on $X$ is defined to be the quiver $Q$ with vertex set $Q_0 = \{ 0, \dotsc,r \}$, and where the arrows from $i$ to $j$ correspond to the irreducible sections in $H^0(\widetilde{X}, (E_j\otimes E_i^\vee)^{\vee\vee})$. 
\end{definition}

\begin{remark}
\label{rem:justification}
\begin{enumerate}
\item For $0\leq i,j\leq r$, the space of sections of the sheaf from \eqref{eqn:doubledual} is a subspace of the ring $\Cox(\widetilde{X}) = \kk[x_1,\dots, x_d]$, and the restriction of the map $\tau$ from \eqref{eqn:HuKeelmap} to this subspace is a surjective $\kk$-linear map
\[
\tau_{ij}\colon H^0\big(\widetilde{X}, \mathcal{O}_{\widetilde{X}}(D_j^\prime-D_i^\prime)\big)\longrightarrow H^0(X,L_j\otimes L_i^{-1})\cong\Hom_{\mathscr{O}_X}(L_i,L_j).
\]
Thus, the image under $\tau_{i,j}$ of the torus-invariant sections of $\mathcal{O}_{\widetilde{X}}(D_j^\prime-D_i^\prime)$ provide a spanning set for the space $\Hom_{\mathscr{O}_X}(L_i,L_j)$ of primary interest, see Proposition~\ref{prop:algebra} below. We abuse terminology by calling $Q$ the `quiver of sections of $\mathscr{L}$' even though paths in $Q$ from $i$ to $j$ are not constructed from a basis of $\Hom(L_i,L_j)$ as in the literature \cite{Craw, CrawSmith}.

\item In Definition~\ref{def:qos} we write the sheaf from \eqref{eqn:doubledual} as $(E_j\otimes E_i^\vee)^{\vee\vee}$ to express it directly in terms of the collection $\widetilde{\mathscr{L}}$. Of course, if each $E_i$ is invertible then we can write
\begin{equation}
\label{eqn:doubledualHom}
H^0(\widetilde{X}, (E_j\otimes E_i^\vee)^{\vee\vee})\cong \Hom_{\mathscr{O}_{\widetilde{X}}}(E_i,E_j)
\end{equation}
and $Q$ is the quiver of sections of the collection $\widetilde{\mathscr{L}}$ on $\widetilde{X}$ as defined in \cite{CrawSmith}. 
\item Composing arrows in $Q$ translates into multiplication of torus-invariant sections. That is, an arrow from $i$ to $k$ defined by a section of $(E_k\otimes E_i^\vee)^{\vee\vee}$ composed with an arrow from $k$ to $j$ defined by a section of $(E_j\otimes E_k^\vee)^{\vee\vee}$ gives the path from $i$ to $j$ defined by the product of these sections under the multiplication map \eqref{eqn:multmapqos}.
\item Definition~\ref{def:qos} depends a priori on the choice of ambient toric variety $\widetilde{X}$. However, given a presentation of the Cox ring as in \eqref{eqn:CoxX}, any two such ambient toric varieties are isomorphic in codimension-one, so $Q$ is independent of the choice of $\widetilde{X}$.
\end{enumerate}
\end{remark}     

\begin{lemma}
\label{lem:quiverprops}
The quiver of sections $Q$ is connected, acyclic, and $0\in Q_0$ is the unique source.
\end{lemma}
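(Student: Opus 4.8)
The plan is to verify the three assertions in turn, in each case translating the quiver-theoretic claim into a statement about torus-invariant sections on the ambient toric variety $\widetilde{X}$ via Definition~\ref{def:qos}.

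\emph{Acyclicity.} First I would observe that an arrow from $i$ to $j$ exists only if $H^0(\widetilde{X}, \mathscr{O}_{\widetilde{X}}(D_j^\prime - D_i^\prime)) \neq 0$, and by Remark~\ref{rem:justification}(3) a nontrivial path from $i$ to $j$ gives a nonzero torus-invariant section of $\mathscr{O}_{\widetilde{X}}(D_j^\prime - D_i^\prime)$ obtained as a product of the sections labelling its arrows. Now $\psi(D_j^\prime - D_i^\prime) = L_j \otimes L_i^{-1}$ in $\Cl(X) = \ZZ^\rho$, and each $L_i$ is effective with $L_0 = \mathscr{O}_X$. I would use the fact that $\Cox(X)$ (equivalently $\Cox(\widetilde{X}) = \kk[x_1,\dots,x_d]$) has no nonzero elements of degree $\alpha$ and of degree $-\alpha$ for $\alpha \neq 0$: indeed $\Cox(\widetilde{X})$ is the polynomial ring, and a monomial of degree $\deg(\mathbf u)$ with $\deg(\mathbf u) = 0$ must be constant because the $T$-action has no nonconstant invariants (equivalently, the only effective divisor class that is also anti-effective is $0$, since $\widetilde X$ is a projective toric variety whose effective cone is strongly convex). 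Hence a cycle at $i$ forces $D_i^\prime - D_i^\prime = 0$ to be represented by a nonconstant monomial, which is impossible unless the cycle is the trivial path $e_i$. This proves $Q$ is acyclic.

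\emph{The vertex $0$ is a source, and is the unique source.} An arrow into $0$ would be a nonzero irreducible torus-invariant section of $(E_j \otimes E_0^\vee)^{\vee\vee} \cong \mathscr{O}_{\widetilde X}(-D_j^\prime)$, i.e.\ a nonconstant element of $\Cox(\widetilde X)$ of degree $-L_j$ in $\Cl(X)$; since $L_j$ is effective this degree is anti-effective, hence zero only if $L_j = \mathscr{O}_X$, which is excluded as the $L_i$ are distinct (and for $j=0$ the section would be an irreducible section of $\mathscr{O}_{\widetilde X}$, of which there are none). So $0$ is a source. For uniqueness, suppose $i \neq 0$ is also a source. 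Then $i$ is the head of no arrow, but since $L_i$ is effective and nontrivial, $H^0(X, L_i) \neq 0$, so by Remark~\ref{rem:justification}(1) $H^0(\widetilde X, \mathscr{O}_{\widetilde X}(D_i^\prime - D_0^\prime)) = H^0(\widetilde X, \mathscr{O}_{\widetilde X}(D_i^\prime))$ is nonzero; a nonzero torus-invariant section here factors through irreducible sections, giving a nontrivial path, hence at least one arrow, from $0$ to $i$, so $i$ is the head of an arrow — contradiction. Thus $0$ is the unique source.

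\emph{Connectedness.} The argument just given shows that for every $j \neq 0$ there is a path from $0$ to $j$: pick a nonzero torus-invariant section $s \in H^0(\widetilde X, \mathscr{O}_{\widetilde X}(D_j^\prime))$ (nonzero because $L_j$ is effective) and decompose it, using the multiplication map~\eqref{eqn:multmapqos} repeatedly, into a product of irreducible sections — this decomposition terminates because each factorisation strictly decreases the degree in the strongly convex effective cone — yielding a path $0 \to j$ in $Q$. Hence the underlying graph of $Q$ is connected. The main obstacle is the bookkeeping in this last decomposition step: one must choose the intermediate vertices among $\{0,\dots,r\}$ and check that the irreducible-section factorisation in~\eqref{eqn:multmapqos} can always be arranged so that the partial products correspond to differences $D_k^\prime - D_0^\prime$ of bundles actually in the collection $\widetilde{\mathscr L}$. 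This is exactly the subtlety flagged in Remark~\ref{rem:justification}(1), and it is handled by noting that to build \emph{some} path it suffices to factor off one irreducible section at a time and land at whichever vertex $k$ makes $E_k$ appear in the collection — concretely, one reduces to the case of a single irreducible arrow by induction on a height function on the effective cone, so no genuinely new idea beyond the strong convexity of that cone is needed.
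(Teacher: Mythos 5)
Your proposal is correct and follows essentially the same route as the paper: acyclicity because a nontrivial cycle would produce a nonconstant torus-invariant global section of $\mathscr{O}_{\widetilde{X}}$ (impossible since $\widetilde{X}$ is projective), and connectedness plus uniqueness of the source $0$ because effectiveness of each $E_i$ yields a torus-invariant section of $H^0(E_i)$ and hence a path from $0$ to $i$. You simply spell out in more detail the factorisation of such a section into irreducible ones (and why it terminates), which the paper leaves implicit.
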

\begin{proof} 
We first suppose that $Q$ admits a nontrivial cycle, so there exists $i\neq j$ such that both $H^0((E_j\otimes E_i^\vee)^{\vee\vee})$ and $H^0((E_i\otimes E_j^\vee)^{\vee\vee})$ are nonzero. Remark~\ref{rem:justification}(3) implies that the product of these nonzero sections gives a nonconstant section of $\mathcal{O}_{\widetilde{X}}$, but $\widetilde{X}$ is projective, so $Q$ is acyclic after all. For $i\in Q_0$, the space $H^0((E_i\otimes E_0^\vee)^{\vee\vee})\cong H^0(E_i)$ has a torus-invariant element since $E_1,\dots, E_r$ effective and $E_0\cong \mathscr{O}_{\widetilde{X}}$, giving rise to a path in $Q$ from $0$ to $i\in Q_0$ as required.
\end{proof}

The quiver of sections depends purely on the collection of reflexive sheaves $\widetilde{\mathscr{L}}$ on $\widetilde{X}$, but we aim to encode information about the collection of line bundles $\mathscr{L}$ on the Mori Dream Space $X$.  To achieve this, write the Cox ring $\kk[x_1,\dots,x_d]$ of the toric variety $\widetilde{X}$ as the semigroup algebra of the semigroup $\NN^d$ of effective torus-invariant Weil divisors in $\widetilde{X}$. Define the \emph{label} of an arrow $a \in Q_1$, denoted $\div(a)\in \NN^{d}$, to be the divisor of zeroes in $\widetilde{X}$ of the defining torus-invariant section $s \in H^0(\widetilde{X}, (E_{\head(a)}\otimes E_{\tail(a)}^\vee)^{\vee\vee})$. More generally, the label of any path $p$ in $Q$ is the torus-invariant divisor $\div(p) := \sum_{a\in \supp(p)} \div(a)$. It is often convenient to consider the corresponding labelling monomial 
\[
x^{\div(p)}:= \prod_{a\in \supp(p)} x^{\div(a)}\in \kk[x_1,\dots, x_d]
\] 
in the Cox ring. Recall from \eqref{eqn:CoxX} that our chosen presentation of the Cox ring of $X$ determines an ideal $I_X\subset \kk[x_1,\dots, x_d]$.

\begin{definition}
\label{def:Jideal}
Consider the two-sided ideal 
   \[
  J_{\mathscr{L}}:= \Bigg( \sum_{p\in \Gamma} c_{p}p \in \kk Q \mid \begin{array}{c}  \exists \text{ finite set of paths } \Gamma \text{ in }Q \text{ that all have the same head and the} \\  \text{same tail, and } \exists\; \{c_p\in \kk : p\in \Gamma\} \text{ such that }\sum_{p\in \Gamma} c_{p} x^{\div(p)} \in I_{X}  \end{array}\Bigg)
 \]
 in the path algebra $\kk Q$. The pair $(Q,J_{\mathscr{L}})$ is the \emph{bound quiver of sections}  of the collection $\mathscr{L}$.
  \end{definition}

 \begin{proposition}
 \label{prop:algebra}
The quotient algebra $\kk Q/J_{\mathscr{L}}$ is isomorphic to $\End_{\mathscr{O}_X}\bigl( \bigoplus_{i\in Q_0} L_i \bigr)$, and each vertex $i\in Q_0$ satisfies $e_{i}( \kk Q / J_{\mathscr{L}}) e_{0} \cong H^{0}(X,L_{i})$.
 \end{proposition}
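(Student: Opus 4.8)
The plan is to build the isomorphism $\kk Q/J_{\mathscr{L}} \to \End_{\mathscr{O}_X}(\bigoplus_{i\in Q_0} L_i)$ directly on paths, and then read off the statement about $e_i(\kk Q/J_{\mathscr{L}})e_0$ as the $e_i(-)e_0$-graded piece. First I would define a $\kk$-algebra homomorphism $\Phi\colon \kk Q \to \End_{\mathscr{O}_X}(\bigoplus_i L_i)$ by sending the trivial path $e_i$ to the identity of $L_i$, and sending an arrow $a\colon i\to j$ to the element of $\Hom_{\mathscr{O}_X}(L_i,L_j)\cong H^0(X,L_j\otimes L_i^{-1})$ given by $\tau_{ij}(s_a)$, where $s_a\in H^0(\widetilde{X},(E_j\otimes E_i^\vee)^{\vee\vee})$ is the defining torus-invariant section of $a$ and $\tau_{ij}$ is the surjection of Remark~\ref{rem:justification}(1). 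That $\Phi$ respects composition follows from Remark~\ref{rem:justification}(3): composing arrows corresponds to multiplying torus-invariant sections in $\Cox(\widetilde{X})=\kk[x_1,\dots,x_d]$, and $\tau$ is a $\kk$-algebra homomorphism, so $\Phi$ of a path $p=a_k\cdots a_1$ is $\tau_{\tail(p),\head(p)}(x^{\div(p)})$, the image of its labelling monomial.

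Next I would check that $J_{\mathscr{L}}\subseteq \Ker\Phi$. A generator of $J_{\mathscr{L}}$ is a combination $\sum_{p\in\Gamma} c_p p$ of paths with common head $j$ and tail $i$ such that $\sum_p c_p x^{\div(p)}\in I_X$. Applying $\Phi$ gives $\tau_{ij}\bigl(\sum_p c_p x^{\div(p)}\bigr)$, and since $\tau_{ij}$ is the restriction of $\tau$ whose kernel is $I_X$ (by the presentation \eqref{eqn:CoxX}), this vanishes. Hence $\Phi$ descends to $\overline{\Phi}\colon \kk Q/J_{\mathscr{L}} \to \End_{\mathscr{O}_X}(\bigoplus_i L_i)$. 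Surjectivity is the heart of the matter and splits into two points: (a) for each pair $i,j$, the torus-invariant sections of $(E_j\otimes E_i^\vee)^{\vee\vee}$, i.e. the monomials of $\Cox(\widetilde{X})$ in the graded piece $H^0(\widetilde{X},\mathscr{O}_{\widetilde{X}}(D_j'-D_i'))$, span that space, so their images under $\tau_{ij}$ span $\Hom_{\mathscr{O}_X}(L_i,L_j)$; and (b) every torus-invariant section of $(E_j\otimes E_i^\vee)^{\vee\vee}$ is, by the definition of irreducibility and induction on its degree, a product of irreducible ones, hence lies in the image of a path in $Q$. Point (b) requires the acyclicity established in Lemma~\ref{lem:quiverprops} so that the inductive factorisation terminates; point (a) is the toric fact that $\Cox(\widetilde{X})=\kk[x_1,\dots,x_d]$ has its graded pieces spanned by monomials, combined with surjectivity of $\tau_{ij}$ from Remark~\ref{rem:justification}(1). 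Together these show $\Image\overline\Phi$ contains every $\Hom_{\mathscr{O}_X}(L_i,L_j)$ and therefore all of $\End_{\mathscr{O}_X}(\bigoplus_i L_i)$.

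The main obstacle is injectivity of $\overline\Phi$, equivalently $\Ker\Phi\subseteq J_{\mathscr{L}}$. Decomposing by head and tail, it suffices to show that if a combination $\sum_{p\in\Gamma}c_p p$ of paths from $i$ to $j$ satisfies $\Phi\bigl(\sum c_p p\bigr)=0$, i.e. $\tau_{ij}\bigl(\sum_p c_p x^{\div(p)}\bigr)=0$, then $\sum_p c_p x^{\div(p)}\in I_X$, which is exactly the condition defining membership in $J_{\mathscr{L}}$. This reduces to: the kernel of $\tau_{ij}$ equals $I_X\cap H^0(\widetilde{X},\mathscr{O}_{\widetilde{X}}(D_j'-D_i'))$ (immediate since $\tau_{ij}$ is the restriction of $\tau$ and $I_X=\Ker\tau$ is $\ZZ^\rho$-homogeneous) \emph{together with} the fact that the labelling monomials $x^{\div(p)}$ for $p$ a path from $i$ to $j$ already span the relevant graded piece of $\kk[x_1,\dots,x_d]$ (from point (b) above), so any element of $I_X$ in that degree is a $\kk$-combination of such monomials. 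Care is needed because distinct paths may carry the same label; this is harmless, as the ideal $J_{\mathscr{L}}$ by construction contains all differences $p-p'$ with $x^{\div(p)}=x^{\div(p')}$ (take $\Gamma=\{p,p'\}$, $c_p=1$, $c_{p'}=-1$, noting $0\in I_X$). Once the algebra isomorphism is in hand, the final assertion $e_i(\kk Q/J_{\mathscr{L}})e_0\cong H^0(X,L_i)$ follows by applying $e_i(-)e_0$: on the left this is the span of (classes of) paths from $0$ to $i$, on the right $\overline\Phi$ carries it isomorphically onto $\Hom_{\mathscr{O}_X}(L_0,L_i)=\Hom_{\mathscr{O}_X}(\mathscr{O}_X,L_i)\cong H^0(X,L_i)$.
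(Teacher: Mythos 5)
Your proposal is correct and follows essentially the same route as the paper: the paper defines the evaluation map $\eta$ as the composite of the epimorphism $\widetilde{\eta}\colon \kk Q\to\bigoplus_{i,j}H^0\big(\widetilde{X},(E_j\otimes E_i^\vee)^{\vee\vee}\big)$ (whose surjectivity and binomial kernel are the content of your points (a), (b) and your remark on equal labels, cited there to Craw--Smith) with the surjection $\widehat{\tau}$ induced by $\tau_{ij}$, and identifies $\Ker(\eta)=J_{\mathscr{L}}$ exactly as in your injectivity step. You merely carry out the composite in one pass rather than factoring through the intermediate algebra, and the argument is sound.
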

 \begin{proof}
 Adapt the proof of \cite[Proposition~3.3]{CrawSmith} slightly, replacing $\Hom(L_i,L_j)$ by $H^0(\widetilde{X}, (E_j\otimes E_i^\vee)^{\vee\vee})$ throughout, to see that the $\kk$-algebra epimorphism 
 \[
 \widetilde{\eta} \colon \kk Q\rightarrow\bigoplus_{i,j\in Q_0} H^0\big(\widetilde{X}, (E_j\otimes E_i^\vee)^{\vee\vee}\big)
 \]
sending $\sum c_{p}p$ to $\sum c_{p}x^{\div(p)}$ has kernel the following $T$-homogeneous ideal in $\kk Q$:
  \[
  J_{\widetilde{\mathscr{L}}}:= \big( p^+-p^- \in \kk Q \mid \head(p^+)=\head(p^-),
 \tail(p^+)=\tail(p^-), \div(p^+) = \div(p^-)\big)
 \]
For each $i, j\in Q_0$, isomorphism \eqref{eqn:doubledual} implies that the restriction of the $\ZZ^\rho$-graded homomorphism $\tau$ from \eqref{eqn:HuKeelmap} defines a surjective $\kk$-linear map $\tau_{ij}\colon H^0\big(\widetilde{X}, (E_j\otimes E_i^\vee)^{\vee\vee}\big)\to H^0(X,L_j\otimes L_i^{-1})$ with kernel the $\kk$-vector subspace $I_X\cap H^0\big(\widetilde{X}, (E_j\otimes E_i^\vee)^{\vee\vee}\big)$. The direct sum of all such maps defines the surjective, right-hand vertical map $\widehat{\tau}$ in the commutative diagram of $\kk$-algebras
  \begin{equation}
 \label{eqn:etadiagram}
  \begin{CD}   
  \kk Q  @>{\widetilde{\eta}}>>  \bigoplus_{i,j\in Q_0} H^0\big(\widetilde{X}, (E_j\otimes E_i^\vee)^{\vee\vee}\big) \\
  @|            @VV{\widehat{\tau}}V       \\
 \kk Q @>{\eta}>>  \End_{\mathscr{O}_{X}}\bigl( \bigoplus_{i\in Q_0} L_i \bigr) \\
 \end{CD}
 \end{equation}
where the epimorphism $\eta$ sends $\sum c_{p}p$ to the class $\sum c_{p}x^{\div(p)}\mod I_X$. The ideal $J_{\mathscr{L}}$ is the kernel of $\eta$, so the first statement holds.  The second statement follows from the first since we have $L_0= \mathscr{O}_X$ and we compose arrows and maps from right to left.  
\end{proof} 
 
 \section{Morphism to the multigraded linear series}
 \label{sec:3}
In this section we use the quiver of sections of a collection $\mathscr{L}$ of line bundles on a Mori Dream Space $X$ to define the corresponding multigraded linear series $\vert \mathscr{L}\vert$. Evaluating sections of line bundles defines a natural map from $X$ to $\vert\mathscr{L}\vert$, and we establish necessary and sufficient conditions that make it a morphism and a closed immersion.

\subsection{Toric quiver flag varieties}
Let $Q$ be a finite, connected quiver. A representation of $Q$ consists of a $\kk$-vector space
 $W_i$ for $i\in Q_0$ and a $\kk$-linear map $w_a \colon
 W_{\tail(a)} \to W_{\head(a)}$ for $a \in Q_1$.  It is convenient to write
 $W$ as shorthand for $\big((W_i)_{i\in Q_0},(w_a)_{a\in Q_1}\big)$.
 The dimension vector of $W$ is the vector $\underline{r}\in
 \ZZ^{Q_0}$ with components $r_i = \dim_\kk (W_i)$ for $i\in Q_0$. A map of representations $\psi\colon W\to W^\prime$ is a
 family $\psi_{i} \colon W_i^{\,} \to W_i^\prime$ of $\kk$-linear maps
 for $i\in Q_0$ satisfying $w_a^\prime \psi_{\tail(a)} =
 \psi_{\head(a)} w_a$ for $a \in Q_1$.  With composition defined
 componentwise, we obtain the abelian category of finite dimensional
 representations of $Q$. For $\theta \in \ZZ^{Q_0}$, define $\theta(W) := \sum_{0\leq
   i\leq \rho} \theta_i\dim_\kk(W_i)$.  Following King~\cite{King}, a representation $W$ of $Q$  is $\theta$-semistable if $\theta(W)=0$ and every
 subrepresentation $W^\prime \subset W$ satisfies
 $\theta(W^\prime)\geq 0$. Moreover, $W$ is $\theta$-stable if the
 only subrepresentations $W^\prime$ with $\theta(W^\prime)=0$ are 0
 and $W$.

 The incidence map $\inc \colon \ZZ^{Q_1} \to \ZZ^{Q_0}$ defined by setting $\inc(\chi_{a})=\chi_{\head(a)} - \chi_{\tail(a)}$ has image equal to the sublattice $\Wt(Q) \subset \ZZ^{Q_0}$ of functions $\theta \colon Q_0 \to \ZZ$ satisfying $\sum_{i \in Q_0} \theta_i = 0$. The vectors $\{\chi_i - \chi_0 : i\neq 0\}$ form a $\ZZ$-basis for $\Wt(Q)$.  The $\Wt(Q)$-grading of $\kk[y_a : a\in Q_1]$ determined by sending $y_a$ to $\inc(\chi_a)$ for $a\in Q_1$ induces a faithful action of the algebraic torus $G:=\Hom(\Wt(Q),\kk^\times)$ on $\mathbb{A}_\kk^{Q_1} = \Spec \kk[y_a : a\in Q_1]$ in which $g=(g_i)_{i\in Q_0}$ acts on $w=(w_a)_{a\in Q_1}$ as $(g \cdot w)_{a} = g_{\head(a)}^{\,} w_{a} g_{\tail(a)}^{-1}$. For $\theta\in \Wt(Q)$, let $\kk[y_a : a\in Q_1]_{\theta}$ denote the $\theta$-graded piece and 
\[
\mathbb{A}^{Q_1}_\kk\git_\theta G= \Proj\Big(\bigoplus_{j\geq 0} \kk[y_a : a\in Q_1]_{j\theta}\Big)
\]
the categorical quotient of the open subset of $\theta$-semistable points in $\mathbb{A}^{Q_1}_\kk$. 

Assume in addition that $Q$ is acyclic with a unique source $0\in Q_0$. The \emph{toric quiver flag variety} $Y_Q$ is the GIT quotient $\mathbb{A}^{Q_1}_\kk\git_\vartheta G$ linearised by the special weight $\vartheta:=\sum_{i\in Q_0} (\chi_i-\chi_0)\in \Wt(Q)$. Such varieties, studied initially by Craw--Smith~\cite{CrawSmith} and in greater generality by Craw~\cite{Craw}, can be characterised as follows:

 \begin{proposition}
 \label{prop:finemoduli}
 Let $Q$ be a finite, connected, acyclic quiver with a unique source $0\in Q_0$ and special weight $\vartheta=\sum_{i\in Q_0} (\chi_i-\chi_0)$. The toric quiver flag variety $Y_Q$ coincides with:
 \begin{enumerate}
 \item[\one] the GIT quotient $\mathbb{A}^{Q_1}_\kk\git_\vartheta G$ linearised by $\vartheta\in \Wt(Q)$;
 \item[\two] the geometric quotient of $\mathbb{A}^{Q_1}_\kk\setminus
   \mathbb{V}(B_{Q})$ by the action of $G$, where the irrelevant ideal is 
   \[
   B_{Q} := \Biggl( \prod\limits_{a \in \mathcal{T}} y_a : \text{$\mathcal{T}$ is a
      spanning tree of $Q$ rooted at $0$} \Biggl) = \bigcap\limits_{i \in Q_0\setminus\{0\}} \bigl( y_a : \head(a) = i \bigr) \, ;
    \]
 \item[\three] the fine moduli space
  $\mathcal{M}_\vartheta(Q)$ of $\vartheta$-stable
  representations of the quiver $Q$ of dimension vector $\underline{r}=(1,\dots, 1)\in \ZZ^{Q_0}$.
 \end{enumerate}
 Moreover, $Y_Q$ is a smooth projective toric variety obtained as a tower of projective space bundles.
  \end{proposition}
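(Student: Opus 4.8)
The plan is to prove the three characterizations are equivalent by a circle of implications, and then deduce the smoothness and tower structure from characterization \two. The equivalence \one~$\Leftrightarrow$~\two\ is the standard translation of a GIT quotient by a torus into a geometric quotient of the semistable locus: I would identify the $\vartheta$-unstable locus of $\mathbb{A}^{Q_1}_\kk$ as the vanishing locus of the ideal generated by all $\vartheta$-semi-invariant monomials, observe that a monomial $\prod_a y_a^{n_a}$ has weight a positive multiple of $\vartheta = \sum_{i\in Q_0}(\chi_i - \chi_0)$ precisely when the arrows appearing (with multiplicity) cover, for each vertex $i\neq 0$, at least one arrow with head $i$, and then check that the minimal such monomials are exactly the products $\prod_{a\in\mathcal{T}} y_a$ over spanning trees $\mathcal{T}$ rooted at $0$. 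This gives the first displayed description of $B_Q$; the second (the intersection of the ideals $(y_a : \head(a)=i)$ over $i\neq 0$) follows because a point lies in $\mathbb{V}(B_Q)$ iff some spanning-tree monomial vanishes at it iff for some vertex $i\neq 0$ every arrow into $i$ is zero. Since every $\vartheta$-semistable point is automatically $\vartheta$-stable here (the dimension vector is $(1,\dots,1)$, so any subrepresentation supported on a proper nonempty set of vertices has $\vartheta$-value strictly positive or negative unless it is all or nothing), the categorical quotient on the semistable locus is in fact a geometric quotient.

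For \two~$\Leftrightarrow$~\three\ I would invoke King's construction: $\mathcal{M}_\vartheta(Q)$ for dimension vector $\underline{r}=(1,\dots,1)$ is the GIT quotient of the representation space $\mathbb{A}^{Q_1}_\kk$ by $\prod_{i} \GL(W_i) = (\kk^\times)^{Q_0}$, linearized by the character $\vartheta$; the diagonal $\kk^\times$ acts trivially, so the effective group is exactly $G = \Hom(\Wt(Q),\kk^\times) = (\kk^\times)^{Q_0}/\kk^\times$, and King's semistability notion for $\vartheta$ matches the GIT semistability for the corresponding linearization. Then one checks that a representation $W$ of dimension vector $(1,\dots,1)$ is $\vartheta$-stable iff the corresponding point of $\mathbb{A}^{Q_1}_\kk$ lies outside $\mathbb{V}(B_Q)$: if some vertex $i\neq 0$ receives only zero arrows, the subrepresentation spanned by the vertices not reachable from $0$ avoiding $i$ destabilizes; conversely if every vertex $i\neq 0$ receives a nonzero arrow one builds, vertex by vertex following a spanning tree rooted at $0$, a chain of nonzero maps showing no proper subrepresentation can have $\vartheta$-value $0$. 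Since $\vartheta$-stability coincides with $\vartheta$-semistability, the moduli functor is corepresented by the geometric quotient, and fineness follows because the stabilizers are trivial and a tautological family exists (this is exactly the content of King's theorem together with the triviality of stabilizers on the stable locus).

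Finally, for the tower-of-projective-space-bundles structure I would order the vertices $0 = v_0, v_1, \dots, v_r$ compatibly with the quiver (possible since $Q$ is acyclic with source $0$), and argue inductively: the subquiver $Q^{(k)}$ on $\{v_0,\dots,v_k\}$ gives a variety $Y_{Q^{(k)}}$, and $Y_{Q^{(k)}}$ is the projectivization of the direct sum of line bundles on $Y_{Q^{(k-1)}}$ corresponding to the arrows into $v_k$ (each such arrow contributes a tautological line bundle on $Y_{Q^{(k-1)}}$, pulled back appropriately). Concretely, projecting off the coordinates $y_a$ with $\head(a)=v_k$ realizes $Y_{Q^{(k)}} \to Y_{Q^{(k-1)}}$, and the fiber over a representation of $Q^{(k-1)}$ is $\mathbb{P}^*$ of the space of maps into $W_{v_k}$, i.e.\ $\mathbb{P}^*$ of a direct sum of lines; GIT stability along the way is controlled exactly by the ``$v_k$ receives a nonzero arrow'' condition, which is the projectivization condition. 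Smoothness and projectivity are then immediate. \textbf{The main obstacle} I anticipate is the bookkeeping in \two~$\Leftrightarrow$~\three: carefully matching King's numerical stability condition with the combinatorial ``spanning tree rooted at $0$'' description of $B_Q$, and verifying the equivalence of stability and semistability so that the quotient is genuinely fine rather than merely coarse. Much of this can be cited from Craw--Smith~\cite{CrawSmith} and Craw~\cite{Craw}, so in the write-up I would lean on those references for the finer points and concentrate the argument on the identification of $B_Q$ and the inductive tower description.
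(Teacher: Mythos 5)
The paper's own ``proof'' of this proposition is a bare citation of \cite[Proposition~3.8]{CrawSmith} and \cite[Theorem~3.3]{Craw}, and your proposal is a correct reconstruction of precisely the arguments established there (unstable locus via spanning-tree monomials, stability equals semistability for dimension vector $(1,\dots,1)$ at $\vartheta$, King's construction, and the inductive projective-bundle tower), ending by deferring to the same two sources. One small slip to fix in a write-up: a point lies in $\mathbb{V}(B_Q)$ iff \emph{every} spanning-tree monomial vanishes there, not \emph{some}; the equivalence with ``some vertex $i\neq 0$ has all incoming arrows zero'' then rests on the observation, which you do invoke later, that choosing one nonvanishing arrow into each vertex $i\neq 0$ of an acyclic quiver with unique source $0$ assembles a spanning tree rooted at $0$.
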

\begin{proof}
See Craw--Smith~\cite[Proposition~3.8]{CrawSmith} and Craw~\cite[Theorem~3.3]{Craw}.
\end{proof} 

The description of $Y_Q=\mathcal{M}_\vartheta(Q)$ as a fine moduli space of representations ensures that it carries a collection of tautological line bundles $\{\mathscr{W}_i : i\in Q_0\}$ with $\mathscr{W}_0\cong \mathscr{O}_{Y_Q}$ and  sheaf homomorphisms $\{\mathscr{W}_{\tail(a)}\to \mathscr{W}_{\head(a)} : a\in Q_1\}$ whose restriction to the fibre over $\mathcal{M}_\vartheta(Q)$ encodes the corresponding representation $\{W_{\tail(a)}\to W_{\head(a)} : a\in Q_1\}$. Moreover, the homomorphism of abelian groups $\Wt(Q)\to \Pic(Y_Q)$ satisfying
\begin{equation}
\label{rem:tautbundles}
(\theta_0,\dots, \theta_r)\mapsto \mathscr{W}_1^{\theta_1}\otimes \dots \otimes \mathscr{W}_r^{\theta_r}
\end{equation}
is an isomorphism. For more details, see \cite[Sections 2-3]{Craw}.

\subsection{Multigraded linear series} 
\label{sec:theorem1}
Let $\mathscr{L}=(\mathscr{O}_X,L_1,\dots, L_r)$ be a collection of distinct, effective line bundles on a Mori Dream Space $X$. Lemma~\ref{lem:quiverprops} guarantees that the corresponding quiver of sections $Q$ is finite, connected, acyclic and has a unique source $0\in Q_0$. 

\begin{definition}
The \emph{multigraded linear series} for $\mathscr{L}$ is the toric quiver flag variety $\vert \mathscr{L}\vert:= Y_Q$ of $Q$ from Proposition~\ref{prop:finemoduli}. It carries tautological line bundles $\{\mathscr{W}_i : i\in Q_0\}$ with $\mathscr{W}_0\cong \mathscr{O}_{Y_Q}$.
\end{definition}

\begin{remark}
\label{rem:justification2}
Just as $Q$ is not precisely the quiver of sections of $\mathscr{L}$ (see Remark~\ref{rem:justification}), it is perhaps an abuse of terminology to call $Y_Q$ the multigraded linear series of $\mathscr{L}$. Indeed, for the special case $\mathscr{L} = (\mathscr{O}_X, L_1)$ we have that $Y_Q\cong \mathbb{P}^*(H^0(E_1))$ is a projective space, but it need not coincide with the classical linear series $\vert L_1\vert$ because the epimorphism $\tau\vert_{H^0(\widetilde{X},E_i)} \colon H^0(\widetilde{X},E_1)\to H^0(X,L_1)$ from \eqref{eqn:HuKeelmap} need not be an isomorphism. 
\end{remark}     

In order to study morphisms from $X$ to the multigraded linear series $\vert\mathscr{L}\vert$, define 
\[
\widetilde{\Phi}\colon \kk[y_a : a\in Q_1] \rightarrow \kk[x_1,\dots,x_d]
\]
to be the $\kk$-algebra homomorphism sending $y_a$ to $x^{\div(a)}$ for $a\in Q_1$.  The actions of the groups $G=\Hom(\Wt(Q),\kk^*)$ and $T=\Hom(\ZZ^\rho,\kk^*)$ on $\kk[y_a : a\in Q_1]$ and $\kk[x_1,\dots,x_d]$ respectively arise from the horizontal semigroup homomorphisms in the diagram 
  \begin{equation}
 \label{eqn:equivariant}
  \begin{CD}   
  \NN^{Q_1}  @>{\inc}>>  \Wt(Q)\\
  @V{\div}VV            @VV{\pic}V       \\
 \NN^{d} @>{\deg}>>  \ZZ^\rho \\
 \end{CD}
 \end{equation}
 where the vertical maps satisfy $\div(\chi_a) = \div(a)$ for $a\in Q_1$ and $\pic(\chi_i)= E_i$ for $i\in Q_0$. The map $\widetilde{\Phi}$ is equivariant with respect to these actions precisely because \eqref{eqn:equivariant} commutes. Under the identification of $\Wt(Q)$ with the Picard group of $\vert \mathscr{L}\vert$, the subspace of the Cox ring $\kk[y_a : a\in Q_1]$ of $\vert\mathscr{L}\vert$ spanned by monomials of weight $\theta\in \Wt(Q)$ coincides with $H^0(\mathscr{W}_1^{\theta_1}\otimes\dots\otimes\mathscr{W}_r^{\theta_r})$. 
 
 Since the $T$-action on $\Cox(X)$ is compatible with that on $\kk[x_1,\dots,x_d]$ by \eqref{eqn:HuKeelmap}, the map
 \[
 \Phi:=\tau\circ \widetilde{\Phi}\colon \kk[y_a : a\in Q_1]\longrightarrow \Cox(X)
 \]
is equivariant. The induced equivariant morphism $\Phi^*\colon \mathbb{V}(I_X)\to \mathbb{A}^{Q_1}_\kk$ descends to a rational map $\varphi_{\vert\mathscr{L}\vert}\colon X\dashrightarrow \vert\mathscr{L}\vert$.  
 
\begin{proposition}
\label{prop:morphism}
Let $\mathscr{L}=(\mathscr{O}_X,L_1,\dots, L_r)$ be a collection of distinct, effective line bundles on $X$. The rational map $\varphi_{\vert\mathscr{L}\vert}\colon X\dashrightarrow \vert\mathscr{L}\vert$ is a morphism if and only if $L_i$ is basepoint-free for $1\leq i\leq r$.  \end{proposition}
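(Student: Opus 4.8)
The plan is to work throughout with the Cox ring presentations. Recall that $X=(\mathbb{V}(I_X)\setminus\mathbb{V}(B_X))/T$, that $\vert\mathscr{L}\vert=Y_Q=(\mathbb{A}^{Q_1}_\kk\setminus\mathbb{V}(B_Q))/G$, and that $\varphi_{\vert\mathscr{L}\vert}$ is the rational map induced by the equivariant morphism $\Phi^*\colon\mathbb{V}(I_X)\to\mathbb{A}^{Q_1}_\kk$. Since $\mathbb{V}(B_Q)$ is $G$-invariant and $\Phi^*$ is equivariant, the condition $\Phi^*(w)\in\mathbb{V}(B_Q)$ depends only on the $T$-orbit of $w$, so $\varphi_{\vert\mathscr{L}\vert}$ is a morphism if and only if $\Phi^*$ carries the $\chi$-stable locus $\mathbb{V}(I_X)\setminus\mathbb{V}(B_X)$ into $\mathbb{A}^{Q_1}_\kk\setminus\mathbb{V}(B_Q)$. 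Combining the description $B_Q=\bigcap_{i\in Q_0\setminus\{0\}}(y_a:\head(a)=i)$ from Proposition~\ref{prop:finemoduli}\two{} with the identity $\Phi(y_a)(w)=w^{\div(a)}$ — valid for $w\in\mathbb{V}(I_X)$ since $\Phi(y_a)=\tau(x^{\div(a)})$ is the class of the labelling monomial — reduces the whole statement to: the condition
\begin{equation*}
(\star)\qquad\text{for all }w\in\mathbb{V}(I_X)\setminus\mathbb{V}(B_X)\text{ and all }i\in Q_0\setminus\{0\}\text{ there is }a\in Q_1\text{ with }\head(a)=i,\ w^{\div(a)}\neq 0
\end{equation*}
holds if and only if each $L_i$ is basepoint-free.

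The key input is a \emph{decomposition lemma}: if $x^{\mathbf{u}}$ is a monomial section of $(E_i\otimes E_k^\vee)^{\vee\vee}$ with $w^{\mathbf{u}}\neq 0$, then there is an arrow $a$ with $\head(a)=i$ and $w^{\div(a)}\neq 0$. This is proved by induction on $\sum_j u_j$: if $x^{\mathbf{u}}$ is irreducible it defines such an arrow; otherwise $x^{\mathbf{u}}$ lies in the image of a multiplication map \eqref{eqn:multmapqos} through some vertex $l\in Q_0\setminus\{i,k\}$, and since distinct monomials are linearly independent it factors as $x^{\mathbf{u}}=x^{\mathbf{v}}x^{\mathbf{w}}$ with $x^{\mathbf{v}}$ a monomial section of $(E_i\otimes E_l^\vee)^{\vee\vee}$ and $x^{\mathbf{w}}$ one of $(E_l\otimes E_k^\vee)^{\vee\vee}$. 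Both exponent vectors are nonzero because $l\neq i,k$ and $E_0,\dots,E_r$ are distinct, so $\sum_j v_j<\sum_j u_j$; also $w^{\mathbf{v}}\neq 0$ since $w^{\mathbf{v}}w^{\mathbf{w}}=w^{\mathbf{u}}\neq 0$, and the inductive hypothesis applies to $x^{\mathbf{v}}$. The recursion stays within the collection $(E_0,\dots,E_r)$ precisely because irreducibility in Definition~\ref{def:qos} is tested against multiplication maps through every vertex of $Q$.

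For $(\Leftarrow)$, suppose each $L_i$ is basepoint-free and fix $w\in\mathbb{V}(I_X)\setminus\mathbb{V}(B_X)$ and $i\neq 0$. As $H^0(\widetilde{X},E_i)=\kk[x_1,\dots,x_d]_{D_i^\prime}$ is spanned by monomials and $\tau_{0,i}\colon H^0(\widetilde{X},E_i)\to H^0(X,L_i)$ is surjective (Remark~\ref{rem:justification}(1)), basepoint-freeness of $L_i$ at $[w]$ yields a monomial section $x^{\mathbf{u}}$ of $E_i=(E_i\otimes E_0^\vee)^{\vee\vee}$ with $w^{\mathbf{u}}\neq 0$; the decomposition lemma with $k=0$ then produces the arrow required by $(\star)$. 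For $(\Rightarrow)$, suppose $\varphi_{\vert\mathscr{L}\vert}$ is a morphism, so $(\star)$ holds, and fix $w$ and $i\neq 0$. Since $Q$ is acyclic with unique source $0$ (Lemma~\ref{lem:quiverprops}), process the vertices in a topological order: for each $i^\prime\neq 0$ choose by $(\star)$ an arrow $a$ into $i^\prime$ with $w^{\div(a)}\neq 0$; as $\tail(a)$ precedes $i^\prime$, a path $p^\prime$ from $0$ to $\tail(a)$ with $w^{\div(p^\prime)}\neq 0$ has already been built, and then $p=ap^\prime$ satisfies $w^{\div(p)}=w^{\div(a)}w^{\div(p^\prime)}\neq 0$. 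Applied to $i$, this gives a path $p$ from $0$ to $i$ whose label has degree $D_i^\prime$, so $x^{\div(p)}\in H^0(\widetilde{X},E_i)$ and $\tau_{0,i}(x^{\div(p)})\in H^0(X,L_i)$ does not vanish at $[w]$. As $[w]$ was arbitrary, $L_i$ is basepoint-free.

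The only delicate point is the decomposition lemma — specifically that its recursion both remains inside the collection $(E_0,\dots,E_r)$ and terminates; the rest is bookkeeping with the Cox rings of $X$ and of $\vert\mathscr{L}\vert$, paralleling the toric case in \cite{CrawSmith}.
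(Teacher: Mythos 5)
Your proof is correct, and it rests on the same underlying reduction as the paper's: both arguments come down to showing that $\Phi^*$ carries the $\chi$-stable locus of $\mathbb{V}(I_X)$ into the $\vartheta$-stable locus of $\mathbb{A}^{Q_1}_\kk$ exactly when each $L_i$ is basepoint-free. The routes to that equivalence differ, though. The paper works on the moduli side: it applies the stability criterion directly to the representation $\Phi^*(\widetilde{x})$, observes that since $\vartheta_0=-r$ and $\vartheta_i>0$ a destabilising subrepresentation exists precisely when some vertex $i>0$ receives no nonvanishing path from $0$, and then identifies evaluations of such paths with evaluations of sections of $L_i$, in effect leaning on the isomorphism $e_i(\kk Q/J_{\mathscr{L}})e_0\cong H^0(X,L_i)$ of Proposition~\ref{prop:algebra}. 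You work instead on the GIT side with the irrelevant ideal $B_Q=\bigcap_{i\neq 0}(y_a:\head(a)=i)$ from Proposition~\ref{prop:finemoduli}\two, and you build the bridge between arrows and sections yourself: your decomposition lemma (a monomial section of $(E_i\otimes E_k^\vee)^{\vee\vee}$ with nonzero value at $w$ factors, by induction on total degree, through an arrow into $i$ with nonzero value) is exactly the spanning statement that the paper absorbs into Proposition~\ref{prop:algebra}, so your argument is more self-contained at the cost of re-proving it. The two points you flag as delicate are indeed the ones that need care, and you handle both correctly: the factorisation stays inside the collection because irreducibility in Definition~\ref{def:qos} is tested only against the vertices $0,\dots,r$, and the induction terminates because the constant monomial cannot occur as a factor when the $E_i$ are distinct. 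Your topological-induction step in the forward direction likewise just re-derives the spanning-tree description of $B_Q$; nothing is missing.
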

\begin{proof}
For $x\in X$ choose any lift $\widetilde{x}\in \mathbb{V}(I_X)\setminus \mathbb{V}(B_X)$. The $G$-orbit of the quiver representation $\Phi^*(\widetilde{x})\in \mathbb{A}^{Q_1}_\kk$, which is independent of the choice of lift,  is obtained by evaluating the labels on arrows at $\widetilde{x}$, that is, by evaluating sections of the bundles $L_{\head(a)}\otimes L_{\tail(a)}^{-1}$ at $x$. The rational map $\varphi_{\vert\mathscr{L}\vert}\colon X\dashrightarrow \vert\mathscr{L}\vert$ is a morphism if and only if every such $\Phi^*(\widetilde{x})\in \mathbb{A}^{Q_1}_\kk$ is $\vartheta$-stable. Let $W^\prime=\big((W^\prime_i)_{i\in Q_0},(w^\prime_a)_{a\in Q_1}\big)$ be a proper subrepresentation of $\Phi^*(\widetilde{x})$. Since $\vartheta_0=-r$ and $\vartheta_i>0$ for $i>0$, the submodule $W^\prime$ of $\Phi^*(\widetilde{x})$ is $\vartheta$-destabilising if and only if $\dim_\kk(W^\prime_0 ) = 1$ and there exists $i>0$ such that for every path $p=a_\ell\cdots a_1$ from 0 to $i$, the composition $w^\prime_{a_\ell}\cdots w^\prime_{a_1}$ is the zero map. In particular, $\Phi^*(\widetilde{x})\in \mathbb{A}^{Q_1}_\kk$ is $\vartheta$-unstable if and only if there exists $i>0$ such that the evaluation of every section of $L_i$ at $x$ equals zero. Equivalently,  $\Phi^*(\widetilde{x})\in \mathbb{A}^{Q_1}_\kk$ is $\vartheta$-stable if and only if $L_i$ is basepoint-free for $1\leq i\leq r$.
  \end{proof}   
   
  The Cox ring of $X$ is a unique factorisation domain, so $\Ker(\Phi)$ is prime and hence so is the ideal 
\begin{equation}
\label{eqn:IQ}
I_{Q}:=\Big( f \in \kk[y_a : a\in Q_1] : f\in \Ker(\Phi) \text{ is }\Wt(Q)\text{-homogeneous} \Big)
\end{equation}
generated by its $\Wt(Q)$-homogeneous elements. This ideal can be computed explicitly as the kernel of the $\kk$-algebra homomorphism 
\begin{equation}
\label{eqn:Psibar}
\Psi\colon \kk[y_a : a\in Q_1]\to \Cox(X)\otimes_\kk \kk[\Wt(Q)]
\end{equation}
 satisfying $\Psi(y_a) = t_{\head(a)}x^{\div(a)}t_{\tail(a)}^{-1}$ for $a\in Q_1$; see Winn~\cite[Chapter~5]{Winn} for details. This ideal cuts out the image of the morphism constructed in Proposition~\ref{prop:morphism} as follows.
 
 \begin{proposition}
 \label{prop:morphismbundles}
Let $\mathscr{L}=(\mathscr{O}_X,L_1,\dots, L_r)$ be a collection of distinct, basepoint-free line bundles on $X$ with quiver of sections $Q$. Then
\begin{enumerate}
\item[\one] the image of the morphism $\varphi_{\vert\mathscr{L}\vert}\colon X\to \vert\mathscr{L}\vert$ is $\mathbb{V}(I_Q)\git_\vartheta G$; and
\item[\two] the tautological line bundles on $\vert \mathscr{L}\vert$ satisfy $\varphi_{\vert \mathscr{L}\vert}^*(\mathscr{W}_i) = L_i$ for $i\in Q_0$.
\end{enumerate}
 \end{proposition}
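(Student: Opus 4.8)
The plan is to deduce both statements from the equivariance of the maps $\Phi$ and $\Phi^*$ established before the statement, together with the GIT quotient descriptions of $X$ and $\vert\mathscr{L}\vert$ in Proposition~\ref{prop:finemoduli} and the Hu--Keel construction of $X$ as $\mathbb{V}(I_X)\git_\chi T$. First I would prove (i). The morphism $\varphi_{\vert\mathscr{L}\vert}$ is induced by the equivariant morphism $\Phi^*\colon \mathbb{V}(I_X)\to\mathbb{A}^{Q_1}_\kk$ obtained by dualising $\Phi=\tau\circ\widetilde{\Phi}$, so by construction the scheme-theoretic image of $\Phi^*$ is $\mathbb{V}(\Ker(\Phi))$. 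Since $\Cox(X)$ is a unique factorisation domain, $\Ker(\Phi)$ is prime; I would argue that because $\Phi$ is $\Wt(Q)$-graded (equivalently, $T$-equivariant via diagram \eqref{eqn:equivariant}), the ideal $\Ker(\Phi)$ is itself $\Wt(Q)$-homogeneous, hence coincides with $I_Q$ as defined in \eqref{eqn:IQ}. Restricting $\Phi^*$ to the stable loci and passing to quotients, the image of $\varphi_{\vert\mathscr{L}\vert}$ is the geometric quotient of $\bigl(\mathbb{V}(I_Q)\setminus\mathbb{V}(B_Q)\bigr)$ by $G$, that is $\mathbb{V}(I_Q)\git_\vartheta G$. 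The one point requiring care here is that the $\vartheta$-unstable locus pulls back correctly: by Proposition~\ref{prop:morphism}, basepoint-freeness of the $L_i$ guarantees $\Phi^*$ maps $\mathbb{V}(I_X)\setminus\mathbb{V}(B_X)$ into $\mathbb{A}^{Q_1}_\kk\setminus\mathbb{V}(B_Q)$, and one checks the preimage of $\mathbb{V}(B_Q)$ meets $\mathbb{V}(I_X)$ only inside $\mathbb{V}(B_X)$, so no stable points are lost or gained.

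Next I would prove (ii). By \eqref{rem:tautbundles} the assignment $(\theta_0,\dots,\theta_r)\mapsto\mathscr{W}_1^{\theta_1}\otimes\cdots\otimes\mathscr{W}_r^{\theta_r}$ identifies $\Wt(Q)$ with $\Pic(\vert\mathscr{L}\vert)$, and under this identification the $\theta$-graded piece of the Cox ring $\kk[y_a:a\in Q_1]$ is $H^0(\vert\mathscr{L}\vert,\mathscr{W}_1^{\theta_1}\otimes\cdots\otimes\mathscr{W}_r^{\theta_r})$. The map $\widetilde{\Phi}$ sends $y_a\mapsto x^{\div(a)}$ and is graded with respect to $\pic\colon\Wt(Q)\to\ZZ^\rho=\Cl(X)$ sending $\chi_i-\chi_0\mapsto L_i$, so composing with $\tau$ shows that $\Phi$ carries the $(\chi_i-\chi_0)$-graded piece of $\kk[y_a:a\in Q_1]$ into $H^0(X,L_i)$. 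Pulling back line bundles through the quotient description, $\varphi_{\vert\mathscr{L}\vert}^*$ is the map on Picard groups induced by $\pic$; since $\pic(\chi_i-\chi_0)=L_i$ under the identifications $\Pic(\vert\mathscr{L}\vert)\cong\Wt(Q)$ and $\Pic(X)\supseteq\Cl(X)=\ZZ^\rho$, it follows that $\varphi_{\vert\mathscr{L}\vert}^*(\mathscr{W}_i)=L_i$ for $i\in Q_0$. Concretely, one pulls back the natural $G$-linearisation on $\mathscr{O}_{\mathbb{A}^{Q_1}_\kk}$ corresponding to the character $\chi_i-\chi_0\in\Wt(Q)$ along $\Phi^*$ and reads off that it becomes the $T$-linearisation on $\mathscr{O}_{\mathbb{V}(I_X)}$ corresponding to $\pic(\chi_i-\chi_0)=\psi(E_i)=L_i\in\Cl(X)$, which descends to $L_i$ on $X$.

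The main obstacle I expect is the bookkeeping in part (i): verifying that $\Ker(\Phi)$ is $\Wt(Q)$-homogeneous and, more delicately, that restricting the equivariant morphism $\Phi^*$ to the stable loci genuinely produces the geometric quotient $\mathbb{V}(I_Q)\git_\vartheta G$ rather than some open or closed subscheme of it. This amounts to a compatibility of GIT quotients: one needs that a point of $\mathbb{V}(I_X)$ is $\chi$-stable for $T$ exactly when its image under $\Phi^*$ is $\vartheta$-stable for $G$ --- the nontrivial direction being that $\chi$-stable points map to $\vartheta$-stable points, which is precisely the content of Proposition~\ref{prop:morphism} under the basepoint-free hypothesis --- together with the fact that $\Phi^*$ descends to a \emph{closed} immersion of quotients onto $\mathbb{V}(I_Q)\git_\vartheta G$ because $\Phi^*$ itself has closed image $\mathbb{V}(I_Q)$ and is equivariant. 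Part (ii) is essentially formal once the identification of Picard groups with weight lattices in \eqref{rem:tautbundles} and the commutativity of \eqref{eqn:equivariant} are in hand.
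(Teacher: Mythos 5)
Part \two\ of your proposal is correct and essentially identical to the paper's argument: $\mathscr{W}_i$ corresponds to $\chi_i-\chi_0$ under \eqref{rem:tautbundles}, and chasing \eqref{eqn:HuKeelmap} and \eqref{eqn:equivariant} gives $\varphi_{\vert\mathscr{L}\vert}^*(\mathscr{W}_i)=(\psi\circ\pic)(\chi_i-\chi_0)=\psi(E_i)=L_i$.

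Part \one, however, contains a genuine error at the step ``because $\Phi$ is $\Wt(Q)$-graded\dots the ideal $\Ker(\Phi)$ is itself $\Wt(Q)$-homogeneous, hence coincides with $I_Q$.'' The map $\Phi$ is not graded by $\Wt(Q)$ on both sides: the target $\Cox(X)$ is $\ZZ^\rho$-graded, and $\Phi$ is graded only with respect to the homomorphism $\pic\colon\Wt(Q)\to\ZZ^\rho$, which is typically far from injective (its source has rank $r$ and its target rank $\rho$, and in the examples of the paper $r>\rho$). Equivariance therefore only forces $\Ker(\Phi)$ to be homogeneous for the coarser $\ZZ^\rho$-grading of $\kk[y_a:a\in Q_1]$; distinct $\Wt(Q)$-weights with the same image under $\pic$ can mix. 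Concretely, in Example~\ref{ex:Grassmannian} the element $y_1-y_7$ lies in $\Ker(\Phi)$ (both variables map to $x_1$) but is not $\Wt(Q)$-homogeneous and does not lie in $I_Q$; a dimension count confirms the two ideals differ, since $\mathbb{V}(\Ker(\Phi))$ is the closure of the image of the $5$-dimensional variety $\Spec(\Cox(\Gr(2,4)))$ whereas $\mathbb{V}(I_Q)$ has dimension $\dim X+\dim G=4+2=6$. The geometric consequence is that the image $\mathbb{V}(\Ker(\Phi))$ of $\Phi^*$ is invariant only under the subtorus $\pic^*(T)\subseteq G$, not under all of $G$, so it cannot directly be the preimage of the image of $\varphi_{\vert\mathscr{L}\vert}$ in the stable locus. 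The paper's proof fills exactly this gap: it identifies $\mathbb{V}(I_Q)$ (cut out by the $\Wt(Q)$-homogeneous \emph{part} of $\Ker(\Phi)$, a strictly smaller ideal in general) as the minimal $G$-invariant algebraic set containing all $G$-orbits that meet $\mathbb{V}(\Ker(\Phi))$, and combines this with completeness of $X$ (so the image is closed) and Proposition~\ref{prop:finemoduli} to conclude that the image is $\mathbb{V}(I_Q)\git_\vartheta G$. Your remaining bookkeeping about stable loci is sound and is indeed handled by Proposition~\ref{prop:morphism}, but the identification of the correct $G$-invariant subvariety is the missing idea.
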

 \begin{proof}
Since $X$ is complete, the image of $\varphi_{\vert \mathscr{L}\vert}$ is a closed subscheme of $\vert\mathscr{L}\vert$. The geometric quotient construction of $\vert\mathscr{L}\vert$ from Proposition~\ref{prop:finemoduli}(i) implies that the image is therefore the geometric  quotient of a $G$-invariant closed subscheme of $\mathbb{A}^{Q_1}_\kk\setminus \mathbb{V}(B_Q)$.  The affine  variety $\mathbb{V}(\Ker(\Phi))$ is the image of the equivariant morphism $\Spec(\Cox(X))\to \mathbb{A}^{Q_1}_\kk$ induced by $\Phi$, and the variety $\mathbb{V}(I_Q)$ cut out by the $\Wt(Q)$-homogeneous part of $\Ker(\Phi)$ is the minimal $G$-invariant algebraic set in $\mathbb{A}^{Q_1}$ containing all $G$-orbits intersecting $\mathbb{V}(\Ker(\Phi))$. The image of $\varphi_{\vert \mathscr{L}\vert}$ is therefore the geometric quotient of $\mathbb{V}(I_Q)\setminus \mathbb{V}(B_Q)$ by the action of $G$. This coincides with the GIT quotient $\mathbb{V}(I_Q)\git_\vartheta G$ by Proposition~\ref{prop:finemoduli}, so \one\ holds.   For part \two, the tautological bundle $\mathscr{W}_i$ on $\vert\mathscr{L}\vert$ corresponds to the weight $\chi_i-\chi_0\in \Wt(Q)$ under the isomorphism \eqref{rem:tautbundles}. Since the equivariant morphism $\Spec(\Cox(X))\to \mathbb{A}^{Q_1}_\kk$ factors through $\mathbb{A}^d_\kk$, examining \eqref{eqn:HuKeelmap} and diagram \eqref{eqn:equivariant} shows that $\varphi_{\vert \mathscr{L}\vert}^*(\mathscr{W}_i) = (\psi\circ \pic)(\chi_i-\chi_0) = \psi(E_i) = L_i$ for $i\in Q_0$.
 \end{proof}

\begin{proof}[Proof of Theorem~\ref{thm:1}]
 Proposition~\ref{prop:morphism} establishes that $\varphi_{\vert \mathscr{L}\vert}\colon X\dashrightarrow \vert \mathscr{L}\vert$ is a morphism if and only if $L_i$ is basepoint-free for $1\leq i\leq r$. Proposition~\ref{prop:morphismbundles} then  presents the image explicitly as a geometric quotient, and establishes that the tautological line bundles on $\vert \mathscr{L}\vert$ satisfy $\varphi_{\vert \mathscr{L}\vert}^*(\mathscr{W}_i) = L_i$ for $i\in Q_0$ as required.
\end{proof}

\begin{remark}
\label{rem:piggyback}
The list of reflexive sheaves $\widetilde{\mathscr{L}}$ on $\widetilde{X}$ determines the ideal
\begin{equation}
\label{eqn:IQtilde}
I_{\widetilde{Q}} =\Big( f \in \kk[y_a : a\in Q_1] : f\in \Ker(\widetilde{\Phi}) \text{ is }\Wt(Q)\text{-homogeneous} \Big)
\end{equation}
obtained as the toric ideal of the semigroup homomorphism $\inc\oplus\div \colon \mathbb{N}^{Q_1}\to \Wt(Q)\oplus \NN^d$. If each reflexive sheaf in $\widetilde{\mathscr{L}}$ is a basepoint-free line bundle on $\widetilde{X}$, then \cite[Theorem~1]{CrawSmith} gives a morphism $\varphi_{\vert \widetilde{\mathscr{L}}\vert}\colon \widetilde{X}\to\mathbb{V}(I_{\widetilde{Q}})\git_\vartheta G$ whose restriction to $X$ is the morphism $\varphi_{\vert\mathscr{L}\vert}\colon X\to \mathbb{V}(I_Q)\git_\vartheta G$ from Proposition~\ref{prop:morphismbundles}. However, this is typically not the case as in Example~\ref{exa:X4part2}.
\end{remark}

\subsection{Criteria for closed immersion}
A collection $\mathscr{L}$ is said to be \emph{very ample} if the morphism $\varphi_{\vert\mathscr{L}\vert}$ from Proposition~\ref{prop:morphism} is a closed immersion. We now introduce a necessary and sufficient condition for $\mathscr{L}$ to be very ample.  
We (enhance and) adapt the proofs of \cite[Proposition~5.7]{Craw} and \cite[Corollary~4.10]{CrawSmith} to our situation because $Q$ is not precisely the quiver of sections for $\mathscr{L}$ (see Remarks~\ref{rem:justification} and \ref{rem:justification2}).  

\begin{theorem}
\label{thm:closedimmersion}
Let $\mathscr{L}=(\mathscr{O}_X,L_1,\dots, L_r)$ be a collection of distinct, basepoint-free line bundles on $X$. The following are equivalent:
\begin{enumerate}
\item[\one] the morphism  $\varphi_{\vert\mathscr{L}\vert}\colon X\rightarrow \vert\mathscr{L}\vert$ is a closed immersion;
\item[\two]  the image of the multiplication map 
 \begin{equation}
 \label{eqn:multmap}
 H^0(L_1)\otimes \dots \otimes H^0(L_r)\longrightarrow H^0(L_1\otimes \cdots \otimes L_r).
\end{equation}
 is a very ample linear series;
 \item[\three] the map $\prod_{1\leq i\leq r} \varphi_{\vert L_i\vert}\colon X\to \vert L_1\vert\times \dots \times \vert L_r\vert$ is a closed immersion.
\end{enumerate}
\end{theorem}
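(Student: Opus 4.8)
The plan is to prove the two equivalences \two$\Leftrightarrow$\three\ and \one$\Leftrightarrow$\three\ separately. The first is the Segre embedding and is essentially formal; the second rests on the representation-theoretic description of $\varphi_{\vert\mathscr{L}\vert}$ from the proof of Proposition~\ref{prop:morphism} together with the spanning statement in Proposition~\ref{prop:algebra}.

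First I would handle \two$\Leftrightarrow$\three. Since each $L_i$ is basepoint-free, the image $V\subseteq H^0(L_1\otimes\cdots\otimes L_r)$ of \eqref{eqn:multmap} is a basepoint-free linear system, so \two\ asserts exactly that the associated morphism $\varphi_V\colon X\to\mathbb{P}^*(V)$ is a closed immersion. One checks that $\varphi_V$ is obtained from $\prod_i\varphi_{\vert L_i\vert}\colon X\to\vert L_1\vert\times\cdots\times\vert L_r\vert$ by post-composing with the Segre embedding $\vert L_1\vert\times\cdots\times\vert L_r\vert\hookrightarrow\mathbb{P}^*\big(\bigotimes_i H^0(L_i)\big)$ and corestricting to the linear subspace $\mathbb{P}^*(V)$ dual to the surjection $\bigotimes_i H^0(L_i)\twoheadrightarrow V$; this is immediate on evaluating at a point, since the Segre image of $\big(\varphi_{\vert L_1\vert}(x),\dots,\varphi_{\vert L_r\vert}(x)\big)$ is the functional $s_1\otimes\cdots\otimes s_r\mapsto\prod_i s_i(x)$, which factors through $V$ and restricts there to evaluation at $x$. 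As the Segre embedding, the linear inclusion $\mathbb{P}^*(V)\hookrightarrow\mathbb{P}^*\big(\bigotimes_i H^0(L_i)\big)$ and the Segre variety in the ambient projective space are all closed immersions (respectively closed), the standard facts that a composite of closed immersions is a closed immersion and that any morphism whose composite with a closed immersion is a closed immersion is itself one give that $\varphi_V$ is a closed immersion if and only if $\prod_i\varphi_{\vert L_i\vert}$ is one.

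Next I would prove \one$\Leftrightarrow$\three. Both $\varphi_{\vert\mathscr{L}\vert}$ and $\prod_i\varphi_{\vert L_i\vert}$ are proper since $X$ is complete, so each is a closed immersion if and only if it is injective on closed points and unramified; it therefore suffices to show that $\varphi_{\vert\mathscr{L}\vert}$ is injective on closed points, respectively unramified at a given closed point, if and only if $\prod_i\varphi_{\vert L_i\vert}$ is. For the point statement, recall from the proof of Proposition~\ref{prop:morphism} that $\varphi_{\vert\mathscr{L}\vert}(x)$ is the isomorphism class of the representation $W(x)$ with $W(x)_i=(L_i)_x$ and arrow $a$ acting by evaluation at $x$ of the section $\overline{x^{\div(a)}}\in H^0(L_{\head(a)}\otimes L_{\tail(a)}^{-1})$. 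Telescoping along a path $p\colon 0\to i$ shows that an isomorphism $W(x)\cong W(x')$ with components $(\lambda_i)$ normalised by $\lambda_0=1$ forces $\overline{x^{\div(p)}}(x')=\lambda_i\,\overline{x^{\div(p)}}(x)$ for all such $p$; by Proposition~\ref{prop:algebra} the sections $\overline{x^{\div(p)}}$ with $p\colon 0\to i$ span $H^0(L_i)$, so this says precisely that the evaluation functionals at $x$ and $x'$ on $H^0(L_i)$ are proportional, i.e.\ $\varphi_{\vert L_i\vert}(x)=\varphi_{\vert L_i\vert}(x')$. Conversely, given for each $i$ a unit $\lambda_i$ with $s(x')=\lambda_i s(x)$ for all $s\in H^0(L_i)$ (units exist since each $L_i$ is basepoint-free), multiplying $\overline{x^{\div(a)}}$ for an arrow $a\colon i\to j$ by a section of $L_i$ nonvanishing at $x$ recovers an isomorphism $W(x)\cong W(x')$. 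Hence $\varphi_{\vert\mathscr{L}\vert}(x)=\varphi_{\vert\mathscr{L}\vert}(x')$ if and only if $\varphi_{\vert L_i\vert}(x)=\varphi_{\vert L_i\vert}(x')$ for every $i$.

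The step I expect to be the main obstacle is the infinitesimal counterpart of this computation, needed for the unramified comparison. I would run the same representation-theoretic argument over the dual numbers $\kk[\epsilon]$, using the fine moduli description $\vert\mathscr{L}\vert=\mathcal{M}_\vartheta(Q)$ to interpret $\varphi_{\vert\mathscr{L}\vert}$ on $\kk[\epsilon]$-points as an isomorphism class of representations over $\kk[\epsilon]$, in order to conclude that $\ker\big(d\varphi_{\vert\mathscr{L}\vert}\big)_x=\bigcap_i\ker\big(d\varphi_{\vert L_i\vert}\big)_x=\ker\big(d(\prod_i\varphi_{\vert L_i\vert})\big)_x$. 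The delicate points are keeping the moduli functor honest over $\kk[\epsilon]$ and using basepoint-freeness of each $L_i$ to ensure the comparison scalars $\lambda_i$ are units in $\kk[\epsilon]$, so that the division in the converse direction is legitimate. With the injective and unramified comparisons in hand, \one$\Leftrightarrow$\three\ follows, and together with the Segre argument this proves the theorem.
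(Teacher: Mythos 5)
Your equivalence \two$\Leftrightarrow$\three\ via the Segre embedding is essentially the same argument the paper uses for that part, and your point-separation computation for \one$\Leftrightarrow$\three\ (telescoping an isomorphism of representations along paths and invoking the spanning statement of Proposition~\ref{prop:algebra} to identify the scalars $\lambda_i$ with proportionality of evaluation functionals on $H^0(L_i)$) is sound. The problem is the step you yourself flag as the main obstacle: the unramifiedness comparison. As written, the claim $\ker\big(d\varphi_{\vert\mathscr{L}\vert}\big)_x=\bigcap_i\ker\big(d\varphi_{\vert L_i\vert}\big)_x$ is asserted with a plan (run the argument over $\kk[\epsilon]$, keep the moduli functor honest, check the $\lambda_i$ are units) but no proof. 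This is a genuine gap, not a routine verification: you need to know that a tangent vector is killed by $\varphi_{\vert\mathscr{L}\vert}$ exactly when the pulled-back family of representations over $\kk[\epsilon]$ is isomorphic to the constant family by an isomorphism with components in $\kk[\epsilon]^\times$, and you need local trivialisations of the $L_i$ to make sense of ``$s(x_\epsilon)=\lambda_i s(x)$'' before the spanning argument can be rerun. None of this is set up in the paper (the moduli space is only ever used through its GIT description and its tautological bundles), so you would have to build it. Without it, \one$\Leftrightarrow$\three\ is only proved at the level of closed points, which does not suffice for a closed immersion.

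It is worth seeing how the paper sidesteps this entirely. Since $\vert\mathscr{L}\vert$ is a smooth projective toric variety and $\vartheta=\mathscr{W}_1\otimes\cdots\otimes\mathscr{W}_r$ is ample, the map $\varphi_{\vert\vartheta\vert}\colon\vert\mathscr{L}\vert\to\mathbb{P}^*(H^0(\vartheta))$ is a closed immersion; the paper then shows that $\varphi_{\vert\vartheta\vert}\circ\varphi_{\vert\mathscr{L}\vert}$ equals $\varphi_V$ for $V$ the image of the multiplication map \eqref{eqn:multmap}, by proving $\Phi(H^0(\vartheta))=V$. That identification uses exactly the spanning fact you use (surjectivity of $\Phi_{(\chi_i-\chi_0)}\colon H^0(\mathscr{W}_i)\to H^0(L_i)$) together with the observation that every monomial of weight $\vartheta$ factors into monomials of weight $\chi_i-\chi_0$. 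Once all five maps sit in one commuting diagram in which the Segre map, the linear inclusion $\iota$ and $\varphi_{\vert\vartheta\vert}$ are closed immersions, the three conditions become equivalent by the purely formal cancellation properties of closed immersions that you already invoke in your Segre step --- no infinitesimal analysis of the moduli functor is needed. If you want to keep your route, you must actually carry out the dual-numbers computation; otherwise the cleaner fix is to replace your \one$\Leftrightarrow$\three\ argument by the composition with $\varphi_{\vert\vartheta\vert}$ and the computation of $\Phi(H^0(\vartheta))$.
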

\begin{proof}
 The toric variety $\vert\mathscr{L}\vert$ is smooth, so the ample bundle $\vartheta=\mathscr{W}_1\otimes \dots\otimes \mathscr{W}_r$ determines the closed immersion $\varphi_{\vert\vartheta\vert}\colon \vert \mathscr{L}\vert \longrightarrow \mathbb{P}^*\big(H^0(\vartheta)\big)$. The composition $\varphi_{\vert \vartheta\vert}\circ \varphi_{\vert \mathscr{L}\vert}\colon X\to \mathbb{P}^*(H^0(\vartheta))$ is determined by the 
line bundle $(\varphi_{\vert \vartheta\vert}\circ \varphi_{\vert \mathscr{L}\vert})^*(\vartheta)= (\psi\circ \pic)(\theta) = L_1\otimes \dots \otimes L_r$ and the subspace of sections $\Phi(H^0(\vartheta))\subseteq H^0(L_1\otimes \dots \otimes L_r)$. We claim that $\Phi(H^0(\vartheta))$ coincides with the image $V$ of the multiplication map \eqref{eqn:multmap}, in which case  $\varphi_{\vert \vartheta\vert}\circ \varphi_{\vert \mathscr{L}\vert}$ coincides with the (a priori rational) map $\varphi_V\colon X\to\mathbb{P}^*(V)$ to the classical linear series. Indeed, for $\theta=(\theta_0,\dots,\theta_r)\in \Wt(Q)$, the restriction of $\Phi$ to the subspace spanned by monomials of weight $\theta$ defines a $\kk$-linear map
   \[
 \Phi_\theta \colon H^0(\mathscr{W}_1^{\theta_1}\otimes\dots\otimes\mathscr{W}_r^{\theta_r})\to H^0(L_1^{\theta_1}\otimes\dots\otimes L_r^{\theta_r})
 \]
because $(\psi\circ \pic)(\theta) = L_1^{\theta_1}\otimes \dots \otimes L_r^{\theta_r}$.  In particular, the map $\Phi_\vartheta$ for $\vartheta=\sum_{1\leq i\leq r} (\chi_i-\chi_0)$ and the product $\otimes_{1\leq i\leq r} \Phi_{(\chi_i-\chi_0)}$ fit in to a commutative diagram of $\kk$-vector spaces
   \begin{equation}
   \label{eqn:WtoL}
   \begin{CD}
   H^0(\mathscr{W}_1)\otimes \dots \otimes H^0(\mathscr{W}_r)@>>> H^0(\mathscr{W}_1\otimes \dots\otimes \mathscr{W}_r)  \\
    @V{\bigotimes_{1\leq i\leq r} \Phi_{(\chi_i-\chi_0)}}VV  @VV{\Phi_\vartheta}V \\
   H^0(L_1)\otimes \dots \otimes H^0(L_r) @>>>   H^0(L_1\otimes \cdots \otimes L_r)
   \end{CD}
 \end{equation}
in which the horizontal maps are given by multiplication. For $1\leq i\leq r$, the map $\Phi_{(\chi_i-\chi_0)}$ can be obtained by composing three surjective maps, namely, the isomorphism $H^0(\mathscr{W}_i) \to e_i(\kk Q)e_0$ from \cite[Corollary~3.5]{Craw}, the restriction to $e_i(\kk Q)e_0$ of the epimorphism $\widetilde{\eta}\colon \kk Q\to \End(\bigoplus_{i\in Q_0} E_i)$ from the proof of Proposition~\ref{prop:algebra}, and the restricted map $\tau\vert_{H^0(E_i)}\colon H^0(E_i)\to H^0(L_i)$ from \eqref{eqn:HuKeelmap}. It follows that $\bigotimes_{1\leq i\leq r} \Phi_{(\chi_i-\chi_0)}$ is surjective. Every monomial of weight $\vartheta$ in $\kk[y_a : a\in Q_1]$ can be decomposed as a product of monomials of weight $\chi_i-\chi_0\in \Wt(Q)$ for $i\in Q_0$, so the top horizontal map in diagram \eqref{eqn:WtoL} is surjective. Commutativity of the diagram then implies that the image of $\Phi_\vartheta$ coincides with the image $V$ of \eqref{eqn:multmap}. This proves the claim. 

Since  $V$ is the image of the multiplication map \eqref{eqn:multmap}, the morphism $\varphi_V\colon X\rightarrow \mathbb{P}^*(V)$  is the composition of the product $\prod_{1\leq i\leq r} \varphi_{\vert L_i\vert}\colon X\longrightarrow \vert L_1\vert\times \dots \times \vert L_r\vert$ of morphisms to the classical linear series and the appropriate Segre embedding to $\mathbb{P}^*(V)$. The claim implies that the diagram
\medskip
\[
\setlength{\arraycolsep}{0.8cm} 
\begin{array}{ccc}
\Rnode{a}{\vert L_1\vert\times \dots \times \vert L_r\vert} & \Rnode{b}{\mathbb{P}^*(V)} & \Rnode{c}{\mathbb{P}^*(H^0\big(\vartheta)\big)}\\[1.2cm]
\Rnode{d}{X} & \Rnode{e}{} & \Rnode{f}{\vert \mathscr{L}\vert} \end{array} 
 \psset{nodesep=5pt,arrows=->,linewidth=0.5pt} 
 \everypsbox{\scriptstyle}
 \ncline{a}{b}\Aput{\text{Segre}} 
 \ncline{b}{c}\Aput{\iota} 
 \ncline{d}{a}\Aput{\prod_{1\leq i\leq r} \varphi_{\vert L_i\vert}} 
 \ncline{d}{f}\Aput{\varphi_{\vert \mathscr{L}\vert}}
 \ncline{f}{c}\Bput{\varphi_{\vert \vartheta\vert}}
\]
commutes, where $\iota$ is the closed immersion of projective spaces induced by $\Phi_\vartheta$.  Three maps in the diagram are closed immersions, so $\varphi_{\vert\mathscr{L}\vert}$ is a closed immersion if and only if $\prod_{1\leq i\leq r} \varphi_{\vert L_i\vert}$ is a closed immersion if and only if the linear series $V$ is very ample as required.
\end{proof}    
    
\begin{remark}
Neither of the maps from statements \one\ and \three\ of Theorem~\ref{thm:closedimmersion} factors through the other. Typically $\vert\mathscr{L}\vert$ has much lower dimension than $\vert L_1\vert\times \dots \times \vert L_r\vert$, so the multigraded linear series is a more efficient multigraded ambient space than the product. 
\end{remark}

\begin{corollary}
 \label{coro:veryample}
 Let $L_1,\dotsc,L_{r-1}$ be distinct, basepoint-free line bundles on $X$.  If
 the subsemigroup of $\Pic(X)$ generated by $L_1, \dotsc, L_{r-1}$
 contains an ample bundle, then there exists a line bundle $L_r$
 such that the quiver of sections for $\mathscr{L} =
 (\mathscr{O}_X, L_1, \dotsc, L_r)$ is very ample.
\end{corollary}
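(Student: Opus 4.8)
The plan is to produce the extra bundle $L_r$ as a sufficiently high tensor power of an ample bundle that already lies in the given subsemigroup, and then to deduce very ampleness of the resulting collection from criterion \three\ of Theorem~\ref{thm:closedimmersion}.

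First I would fix an ample line bundle $A$ in the subsemigroup of $\Pic(X)$ generated by $L_1,\dots,L_{r-1}$, which exists by hypothesis. Since $X$ is projective, $A^{\otimes m}$ is very ample for all $m\gg 0$; and since $\Pic(X)\subseteq\Cl(X)=\ZZ^\rho$ is torsion-free, the bundles $\{A^{\otimes m} : m\geq 1\}$ are pairwise distinct, so for $m$ large enough $A^{\otimes m}$ differs from each of the finitely many bundles $\mathscr{O}_X, L_1,\dots,L_{r-1}$. Fix such an $m$ and set $L_r:=A^{\otimes m}$. Then $\mathscr{L}=(\mathscr{O}_X,L_1,\dots,L_r)$ is a collection of distinct line bundles, and all of $L_1,\dots,L_r$ are basepoint-free --- the first $r-1$ by hypothesis and $L_r$ because it is very ample --- so $\varphi_{\vert\mathscr{L}\vert}$ is a morphism by Proposition~\ref{prop:morphism} and Theorem~\ref{thm:closedimmersion} is applicable to $\mathscr{L}$.

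It then remains to verify condition \three\ of Theorem~\ref{thm:closedimmersion}, namely that $\prod_{1\leq i\leq r}\varphi_{\vert L_i\vert}\colon X\to\vert L_1\vert\times\dots\times\vert L_r\vert$ is a closed immersion. The point is that $\varphi_{\vert L_r\vert}\colon X\to\vert L_r\vert$ is already a closed immersion because $L_r$ is very ample, and a morphism to a product is a closed immersion as soon as one of its components is: writing $g=\prod_{1\leq i\leq r-1}\varphi_{\vert L_i\vert}\colon X\to\vert L_1\vert\times\dots\times\vert L_{r-1}\vert=:Y$, the map $\prod_{1\leq i\leq r}\varphi_{\vert L_i\vert}$ agrees, up to reordering the factors, with $(\varphi_{\vert L_r\vert},g)$, and this factors as the graph morphism $X\to X\times Y$ --- a closed immersion since $Y$ is separated, being a section of the projection $X\times Y\to X$ --- followed by $\varphi_{\vert L_r\vert}\times\mathrm{id}_Y\colon X\times Y\to\vert L_r\vert\times Y$, which is a closed immersion because it is the base change of $\varphi_{\vert L_r\vert}$ along the projection $\vert L_r\vert\times Y\to\vert L_r\vert$. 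Being a composite of closed immersions, $\prod_{1\leq i\leq r}\varphi_{\vert L_i\vert}$ is a closed immersion, so Theorem~\ref{thm:closedimmersion} gives that $\varphi_{\vert\mathscr{L}\vert}$ is a closed immersion, i.e.\ the quiver of sections for $\mathscr{L}$ is very ample.

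There is no serious obstacle; the only things requiring care are the bookkeeping that $L_r$ can be chosen distinct from the bundles already present (using torsion-freeness of $\Pic(X)$) and the formal argument that attaching one very ample bundle forces the product map in \three\ to be a closed immersion. One could instead verify condition \two\ directly, showing that for $m\gg 0$ the multiplication map $H^0(L_1)\otimes\dots\otimes H^0(L_{r-1})\otimes H^0(L_r)\to H^0(L_1\otimes\dots\otimes L_r)$ is surjective onto the complete linear series of the very ample bundle $L_1\otimes\dots\otimes L_r$ --- this is where one would invoke multigraded regularity --- but routing the proof through \three\ avoids any vanishing input altogether.
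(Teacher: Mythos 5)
Your proof is correct, but it takes a genuinely different route from the paper's. The paper verifies condition \two\ of Theorem~\ref{thm:closedimmersion}: following \cite[Proposition~4.14]{CrawSmith} verbatim, it takes $L_r$ to be a suitably large power of the ample bundle lying in the subsemigroup generated by $L_1,\dots,L_{r-1}$, so that $L_1\otimes\cdots\otimes L_r$ is very ample and the multiplication map \eqref{eqn:multmap} is surjective --- the surjectivity being exactly where multigraded regularity (Proposition~\ref{prop:regularity}) enters. You instead verify condition \three, observing that once one factor $\varphi_{\vert L_r\vert}$ of the product map is a closed immersion the whole product map is, via the standard graph/base-change factorisation; your bookkeeping (distinctness of $A^{\otimes m}$ from the finitely many given bundles via torsion-freeness of $\Cl(X)$, basepoint-freeness of $L_r$) is all in order. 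Your route is more elementary --- it needs no cohomology vanishing at all --- and it in fact reveals that the subsemigroup hypothesis is superfluous for the bare existence claim, since \emph{any} very ample bundle distinct from $\mathscr{O}_X,L_1,\dots,L_{r-1}$ would do. What the paper's choice buys is not visible from the corollary's statement: producing $L_r$ \emph{inside} the subsemigroup with the multiplication map surjective onto the complete linear series of $L_1\otimes\cdots\otimes L_r$ is precisely the shape of collection \eqref{eqn:finelist} needed for the moduli-theoretic Theorem~\ref{thm:main} (where $E_{r-1}=E^{\delta}$ and $E_r=E^{2\delta}$ must be built multiplicatively from $E_1,\dots,E_{r-2}$), so the paper's argument is a rehearsal for that later construction, whereas your $L_r$ would not in general be usable there.
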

\begin{proof}
Theorem~\ref{thm:closedimmersion} implies that $\varphi_{\vert\mathscr{L}\vert}$ is a closed immersion if $L_1\otimes\dots\otimes L_r$ is very ample and the map \eqref{eqn:multmap} is surjective. The proof of \cite[Proposition 4.14]{CrawSmith} now applies verbatim.
\end{proof}

 \begin{example}
 \label{exa:X4part2}
 Continuing Example~\ref{exa:X4}, let $X_4$ be the del Pezzo surface for which the ample linearisation $\chi=11H-5R_1-3R_2-2R_3-R_4$ defines $\widetilde{X_4}:= \mathbb{A}^{10}\git_{\chi} T$. We compute using the intersection pairing that each line bundle in the list 
\begin{equation}
\label{eqn:fsecX4}
\mathscr{L}=(\mathscr{O}_{X_4}, H, 2H-R_1, 2H-R_2, 2H-R_3, 2H-R_4, 2H)
\end{equation}
is basepoint-free but not ample. Write $\widetilde{\mathscr{L}}=(E_0, E_1, \dots, E_6)$. Since each $E_i$ is basepoint-free on some ambient toric variety, the code from \cite[Example~2.11]{LafaceVelasco} computes the irrelevant ideal for the GIT quotient $\mathbb{A}^d_\kk\git_{E_i} T$ determined by the corresponding linearisation $E_i\in \ZZ^5$.  By comparing each with the irrelevant ideal of $\chi\in \ZZ^5$ we see that $E_3, E_4, E_5$ are not basepoint-free line bundles on $\widetilde{X}_4$. In particular, we cannot deduce that $\varphi_{\vert \mathscr{L}\vert}$ is a morphism simply by restriction from the toric case (compare Remark~\ref{rem:piggyback}), though it is a morphism by Proposition~\ref{prop:morphism}.

To investigate $\varphi_{\vert \mathscr{L}\vert}$ directly in this case, the quiver of sections $Q$ is shown in Figure~\ref{fig:X4easy},  where each arrow is labelled by the torus-invariant section of the relevant reflexive sheaf on $\widetilde{X_4}$. 
      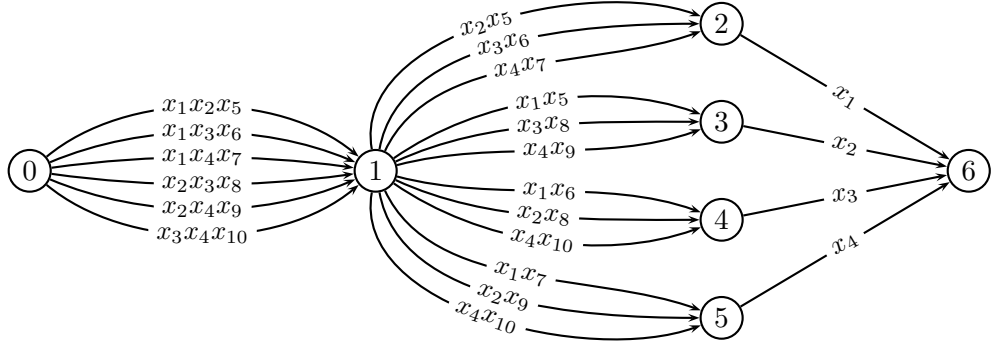
\begin{figure}[!ht]
    \centering
      \psset{unit=1.3cm}
      \begin{pspicture}(-0.6,-0.3)(10,3.2)
      \cnodeput(0,1.5){AA}{0}
      \cnodeput(3.5,1.5){A}{1}
      \cnodeput(7,3){B}{2} 
      \cnodeput(7,2){C}{3}
       \cnodeput(7,1){D}{4}
       \cnodeput(7,0){E}{5} 
      \cnodeput(9.5,1.5){F}{6}
      \psset{nodesep=0pt}
        \nccurve[angleA=40,angleB=140]{->}{AA}{A}\lput*{:U}{\small{$x_1x_2x_5$}}
        \nccurve[angleA=23,angleB=157]{->}{AA}{A}\lput*{:U}{\small{$x_1x_3x_6$}}
        \nccurve[angleA=8,angleB=172]{->}{AA}{A}\lput*{:U}{\small{$x_1x_4x_7$}}
         \nccurve[angleA=352,angleB=188]{->}{AA}{A}\lput*{:U}{\small{$x_2x_3x_8$}}
        \nccurve[angleA=337,angleB=203]{->}{AA}{A}\lput*{:U}{\small{$x_2x_4x_9$}}
        \nccurve[angleA=320,angleB=220]{->}{AA}{A}\lput*{:U}{\small{$x_3x_4x_{10}$}}
        \nccurve[angleA=100,angleB=160]{->}{A}{B}\lput*{:U}{\small{$x_2x_5$}}
        \nccurve[angleA=80,angleB=180]{->}{A}{B}\lput*{:U}{\small{$x_3x_6$}}
        \nccurve[angleA=65,angleB=200]{->}{A}{B}\lput*{:U}{\small{$x_4x_7$}}
         \nccurve[angleA=35,angleB=160]{->}{A}{C}\lput*{:U}{\small{$x_1x_5$}}
        \nccurve[angleA=25,angleB=180]{->}{A}{C}\lput*{:U}{\small{$x_3x_8$}}
        \nccurve[angleA=15,angleB=200]{->}{A}{C}\lput*{:U}{\small{$x_4x_9$}}
          \nccurve[angleA=345,angleB=160]{->}{A}{D}\lput*{:U}{\small{$x_1x_6$}}
        \nccurve[angleA=335,angleB=180]{->}{A}{D}\lput*{:U}{\small{$x_2x_8$}}
        \nccurve[angleA=325,angleB=200]{->}{A}{D}\lput*{:U}{\small{$x_4x_{10}$}}
         \nccurve[angleA=295,angleB=160]{->}{A}{E}\lput*{:U}{\small{$x_1x_7$}}
        \nccurve[angleA=280,angleB=180]{->}{A}{E}\lput*{:U}{\small{$x_2x_9$}}
        \nccurve[angleA=260,angleB=200]{->}{A}{E}\lput*{:U}{\small{$x_4x_{10}$}}
        \ncline{->}{B}{F}\lput*{:U}{\small{$x_1$}}    
        \ncline{->}{C}{F}\lput*{:U}{\small{$x_2$}}    
        \ncline{->}{D}{F}\lput*{:U}{\small{$x_3$}}    
        \ncline{->}{E}{F}\lput*{:U}{\small{$x_4$}}    
               \end{pspicture}    \caption{A quiver of sections for a collection on $X_4$}
  \label{fig:X4easy}
  \end{figure} 
 Arrows with tail at 0 are listed $a_1,\dots, a_6$ from the top of Figure~\ref{fig:X4easy} to the bottom;  list those with tail at 1 as $a_7, \dots, a_{18}$ from the top of the figure to the bottom; and list those with head at 6 as $a_{19}, \dots, a_{22}$ from the top to the bottom. Likewise, list the coordinates of $\mathbb{A}^{Q_1}_\kk$ as $y_1,\dots, y_{22}$, and compute the kernel of \eqref{eqn:Psibar} to obtain the ideal
\[
I_Q = \left(\begin{array}{c} 
y_{16}-y_{17}+y_{18},\; y_{13}-y_{14}+y_{15}, \; y_{10}-y_{11}+y_{12}, \; y_7-y_8+y_9,\;  y_3-y_5+y_6\\
y_2-y_4+y_6,\; y_1-y_4+y_5,\; 
y_{15}y_{21}-y_{18}y_{22},\; y_{12}y_{20}-y_{17}y_{22},\; y_{11}y_{20}-y_{14}y_{21} \\ 
y_9y_{19}-y_{17}y_{22}+y_{18}y_{22},\: y_8y_{19}-y_{14}y_{21}+y_{18}y_{22},\; y_6y_{17}-y_5y_{18}, y_6y_{14}-y_4y_{15} \\
y_5y_{11}-y_4y_{12},\; y_5y_8-y_6y_8-y_4y_9+y_6y_9,\; 

y_8y_{15}y_{17}-y_9y_{14}y_{18}-y_8y_{15}y_{18}+y_9y_{15}y_{18}\\
y_{11}y_{15}y_{17}-y_{12}y_{14}y_{18},  \; y_9y_{11}y_{17}-y_8y_{12}y_{17}+y_8y_{12}y_{18}-y_9y_{12}y_{18}\\
y_9y_{11}y_{14}-y_8y_{12}y_{14}+y_8y_{11}y_{15}-y_9y_{11}y_{15} 
    \end{array}\right)
 \]
 that cuts out the image of $\varphi_{\vert\mathscr{L}\vert}\colon X_4\to \vert\mathscr{L}\vert$. 
 
 We claim that $\varphi_{\vert\mathscr{L}\vert}$ is a closed immersion, and hence $X_4\cong \mathbb{V}(I_Q)\git_\vartheta G$. Indeed, for $1\leq i\leq 4$ we have $L_{i+1}=2H-R_{i}$, and the intersection pairing shows that $\varphi_{\vert L_{i+1}\vert}\colon X_4\to \mathbb{P}_{\mathbb{P}^1}(\mathscr{O}_{\mathbb{P}^1}\oplus \mathscr{O}_{\mathbb{P}^1}(1))$ contracts the $(-1)$-curves $\{R_j : j\neq i\}$ but not $R_i$. A simple case-by-case analysis shows that the morphism $\prod_{2\leq i\leq 5} \varphi_{\vert L_i\vert}$ separates all points and tangent vectors of $X_4$ and hence so does $\prod_{1\leq i\leq 6} \varphi_{\vert L_i\vert}$. We deduce from Theorem~\ref{thm:closedimmersion} that $\varphi_{\vert\mathscr{L}\vert}\colon X_4\to \vert\mathscr{L}\vert$ is a closed immersion.
   \end{example}

 \section{Fine moduli of bound quiver representations}
 \label{sec:4}
 This section establishes when the morphism $\varphi_{\vert\mathscr{L}\vert}\colon X\to \vert \mathscr{L}\vert$ induces an isomorphism between the Mori Dream Space $X$ and a fine moduli space $\mathcal{M}_\vartheta(\modAL)$ of $\vartheta$-stable modules over the algebra $A_\mathscr{L}=\End(\bigoplus_{i\in Q_0} L_i)$. Our main algebraic result is an efficient construction for collections of line bundles that induce the isomorphism.
   
\subsection{Bound quiver representations}
Let $Q$ be a quiver. For any nontrivial path $p = a_k \cdots a_1$ in $Q$, define the monomial $y_p:=y_{a_k}\cdots y_{a_1}\in \kk[y_a : a\in Q_1]$, and for any representation $W$ of $Q$, define $w_p \colon W_{\tail(p)} \to W_{\head(p)}$ to be the $\kk$-linear map $w_p = w_{a_k} \dotsb w_{a_1}$ obtained by composition. Let $J\subset \kk Q$ be a two-sided ideal of relations with generators of the form $\sum_{p\in \Gamma} c_pp$, where each $\Gamma$ is a finite set of paths that share the same head and the same tail. A representation $W$ of $Q$ is a representation of the bound quiver $(Q,J)$ if and only if $ \sum_{p\in \Gamma} c_{p}w_p =0$ for each $\Gamma$ arising in the definition of $J$.  A point in representation space $(w_a)\in\mathbb{A}_\kk^{Q_1}$ defines a representation of $(Q,J)$ if and only it lies in the subscheme
$\mathbb{V}(I_J)$ cut out by the ideal 
 \[
 I_J:=  \Bigg( \sum_{p\in \Gamma} c_{p}y_p \in \kk[y_a : a\in Q_1]  \mid \sum_{p\in \Gamma} c_pp \in J\Bigg)
 \]
 of relations in $\kk[y_a : a\in Q_1]$. The ideal $I_J$ is $\Wt(Q)$-homogeneous, so $\mathbb{V}(I_J)$ is $G$-invariant and the GIT quotient
\begin{equation}
\mathcal{M}_\vartheta(Q,J):= \mathbb{V}(I_J)\git_\vartheta G =
\textstyle{ \Proj \Big{(}\bigoplus_{j \geq 0} \big(\kk[y_a : a\in Q_1]/I_J)_{j \vartheta} \Big{)}}
\end{equation}
is the fine moduli space of $\vartheta$-stable representations of $(Q,J)$ with dimension vector $(1,\dots, 1)$. The tautological bundles on $\mathcal{M}_\vartheta(Q,J)$ are obtained from those on $\mathcal{M}_\vartheta(Q)$ by restriction.
 
 \begin{remark}
 \label{rem:modules}
 The abelian category of finite-dimensional representations of $(Q,J)$ is equivalent to the category of finitely-generated $A:=\kk Q/J$-modules, so $\mathcal{M}_\vartheta(Q,J)$ is equivalently the fine moduli space of $\vartheta$-stable $A$-modules that are isomorphic as $\bigl(
\bigoplus_{i \in Q_0} \kk e_i \bigr)$-modules to $\bigoplus_{i \in Q_0} \kk e_i$.
\end{remark}

\subsection{Ideals of relations}
A list $\mathscr{L}$ of line bundles on $X$ defines a pair of two-sided ideals in $\kk Q$ and hence a pair of ideals of relations in $\kk[y_a : a\in Q_1]$. First, the ideal $J_\mathscr{L}$ from Definition~\ref{def:Jideal} determines the ideal of relations $I_\mathscr{L}:=  I_{(J_{\mathscr{L}})}$, that is, the ideal
 \begin{equation}
 \label{eqn:IL}
 I_{\mathscr{L}}:= \Bigg( \sum_{p\in \Gamma} c_{p}y_p \in \kk[y_a : a\in Q_1]  \mid \begin{array}{c}   \exists \text{ finite set of paths } \Gamma \text{ with same head and same} \\  \text{tail, and } \exists\; c_p\in \kk\text{ such that }\widetilde{\Phi}\big(\sum_{p\in \Gamma} c_{p} y_p\big) \in I_{X}\end{array}\!\!\Bigg).
 \end{equation}
Each generator of $I_{\mathscr{L}}$ is $\Wt(Q)$-homogeneous and lies in $\Ker(\Phi)$, so $I_{\mathscr{L}}$ is contained in the prime ideal $I_Q$ from \eqref{eqn:IQ}.  Winn~\cite[Chapter~5]{Winn} presents code that calculates $I_{\mathscr{L}}$ explicitly. 

 In addition, the kernel $J_{\widetilde{\mathscr{L}}}$ of the epimorphism $\widetilde{\eta}\colon \kk Q \to \End_{\mathscr{O}_{\widetilde{X}}}(\bigoplus_{i\in Q_0} E_i)$ from the proof of Proposition~\ref{prop:algebra} determines the noncommutative toric ideal of relations $I_{\widetilde{\mathscr{L}}}:=  I_{(J_{\widetilde{\mathscr{L}}})}$, where
 \begin{equation}
 \label{eqn:ILtilde}
 I_{\widetilde{\mathscr{L}}}:=  \Bigg( \sum_{p\in \widetilde{\Gamma}} c_{p}y_p \in \kk[y_a : a\in Q_1]  \mid \begin{array}{c}    \exists \text{ finite set of paths } \widetilde{\Gamma} \text{ with same head and same} \\
  \text{tail, and } \exists\; c_p\in \kk\text{ such that }\widetilde{\Phi}\big(\sum_{p\in \widetilde{\Gamma}} c_{p} y_p\big) =0\end{array}\Bigg).
 \end{equation}
We have that $I_{\widetilde{\mathscr{L}}}$ is contained both in the ideal of equations $I_{\widetilde{Q}}$ from \eqref{eqn:IQtilde}, and in $I_\mathscr{L}$ from \eqref{eqn:IL}. Compute the affine varieties in $\mathbb{A}^{Q_1}_\kk$ cut out by the ideals $I_{\widetilde{\mathscr{L}}}, I_{\widetilde{Q}}, I_\mathscr{L}, I_Q\subset \kk[y_a : a\in Q_1]$,  remove from each the $\vartheta$-unstable locus $\mathbb{V}(B_Q)$, and compute the geometric quotient by the action of $G$ to obtain the left-hand square in the commutative diagram of GIT quotients 
  \begin{equation}
   \label{eqn:GITquotients}
   \begin{CD}
  \mathbb{V}(I_{\widetilde{Q}})\git_\vartheta G @>>>  \mathcal{M}_\vartheta(Q,J_{\widetilde{\mathscr{L}}})@>>>  \mathbb{A}^{Q_1}_\kk\git_\vartheta G \\
     @AAA @AAA @|\\
   \mathbb{V}(I_{Q})\git_\vartheta G @>>> \mathcal{M}_\vartheta(Q,J_\mathscr{L})@>>> \vert\mathscr{L}\vert
   \end{CD}
 \end{equation}
in which each morphism is a closed immersion. 

Recall that for ideals $B, I\subseteq \kk[y_a : a\in Q_1]$, the saturation of $I$ by $B$ is the ideal 
 \[
 (I : B^\infty) =  \big(g\in \kk[y_a : a\in Q_1] \mid h^Ng \in I \text{ for some }h\in B_Q \text{ and }N>0\big).
 \]

\begin{theorem}
\label{thm:surjective}
If $\mathscr{L}$ is a collection of distinct, basepoint-free line bundles on $X$, then the induced morphism 
\begin{equation}
\label{eqn:morphismtoModuli}
\varphi_{\vert\mathscr{L}\vert}\colon X\longrightarrow \mathcal{M}_\vartheta(Q,J_\mathscr{L})
\end{equation}
is surjective if $I_Q$ coincides with the saturation $(I_{\mathscr{L}} : B_Q^\infty)$. In particular, if $\mathscr{L}$ is very ample and $I_Q = (I_{\mathscr{L}} : B_Q^\infty)$ then \eqref{eqn:morphismtoModuli} is an isomorphism.
\end{theorem}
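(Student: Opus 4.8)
The plan is to reduce the statement to the elementary principle that, for any ideal $I\subseteq\kk[y_a : a\in Q_1]$, the projective GIT quotient $\mathbb{V}(I)\git_\vartheta G$ depends on $I$ only through its saturation $(I:B_Q^\infty)$ by the irrelevant ideal $B_Q$ of Proposition~\ref{prop:finemoduli}(ii).

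First I would unwind the definitions. By construction $\mathcal{M}_\vartheta(Q,J_\mathscr{L})=\mathbb{V}(I_\mathscr{L})\git_\vartheta G$ is the geometric quotient of the $\vartheta$-stable locus $\mathbb{V}(I_\mathscr{L})\setminus\mathbb{V}(B_Q)$ by $G$, where $I_\mathscr{L}$ is the ideal from \eqref{eqn:IL}. For a generator $b$ of $B_Q$ and any ideal $I$, passing to the localisation at $b$ gives $(\kk[y_a : a\in Q_1]/I)_b=(\kk[y_a : a\in Q_1]/(I:b^\infty))_b$ since $b$ becomes invertible; covering $\mathbb{A}^{Q_1}_\kk\setminus\mathbb{V}(B_Q)$ by the basic opens $D(b)$ and gluing shows that the open subscheme $\mathbb{V}(I)\setminus\mathbb{V}(B_Q)$ of $\mathbb{V}(I)$, and hence the GIT quotient $\mathbb{V}(I)\git_\vartheta G$, depends only on $(I:B_Q^\infty)$.

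Next I would invoke the hypothesis $I_Q=(I_\mathscr{L}:B_Q^\infty)$. Combined with the previous step it yields $\mathbb{V}(I_\mathscr{L})\setminus\mathbb{V}(B_Q)=\mathbb{V}(I_Q)\setminus\mathbb{V}(B_Q)$ as schemes, so $\mathcal{M}_\vartheta(Q,J_\mathscr{L})=\mathbb{V}(I_Q)\git_\vartheta G$. By Proposition~\ref{prop:morphismbundles}(i) the scheme $\mathbb{V}(I_Q)\git_\vartheta G$ is precisely the image of $\varphi_{\vert\mathscr{L}\vert}$, and by the bottom row of \eqref{eqn:GITquotients} the morphism $\varphi_{\vert\mathscr{L}\vert}\colon X\to\vert\mathscr{L}\vert$ factors through the closed subscheme $\mathcal{M}_\vartheta(Q,J_\mathscr{L})\subseteq\vert\mathscr{L}\vert$; the resulting morphism \eqref{eqn:morphismtoModuli} is therefore surjective. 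For the final assertion, if in addition $\mathscr{L}$ is very ample then $\varphi_{\vert\mathscr{L}\vert}\colon X\to\vert\mathscr{L}\vert$ is a closed immersion, hence restricts to an isomorphism from $X$ onto its scheme-theoretic image; that image is $\mathbb{V}(I_Q)\git_\vartheta G=\mathcal{M}_\vartheta(Q,J_\mathscr{L})$ by the previous step, and so \eqref{eqn:morphismtoModuli} is an isomorphism.

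I do not expect a genuine obstacle; the proof is bookkeeping. The one point requiring care is the compatibility of scheme structures: $\mathcal{M}_\vartheta(Q,J_\mathscr{L})$ is a priori cut out by the (possibly non-radical) ideal $I_\mathscr{L}\subseteq I_Q$, and the hypothesis is exactly what forces its $B_Q$-saturation to be the prime ideal $I_Q$, so that after removing $\mathbb{V}(B_Q)$ and quotienting by $G$ the scheme structure agrees with that of the reduced variety $\mathbb{V}(I_Q)\git_\vartheta G$. This is what upgrades set-theoretic surjectivity to the scheme-theoretic statements above.
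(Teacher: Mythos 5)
Your argument is correct and follows essentially the same route as the paper: both reduce to showing that $\mathbb{V}(I_Q)\setminus\mathbb{V}(B_Q)$ and $\mathbb{V}(I_{\mathscr{L}})\setminus\mathbb{V}(B_Q)$ coincide, which the hypothesis $I_Q=(I_{\mathscr{L}}:B_Q^\infty)$ guarantees, and then invoke Proposition~\ref{prop:morphismbundles}\one\ and the very ampleness of $\mathscr{L}$ for the final isomorphism. Your localisation argument simply makes explicit the standard fact, used implicitly in the paper, that a GIT quotient depends on the defining ideal only through its saturation by the irrelevant ideal.
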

\begin{proof}
It suffices by Theorem~\ref{thm:closedimmersion} to show that the closed immersion $\mathbb{V}(I_{Q})\git_\vartheta G\to \mathbb{V}(I_{\mathscr{L}})\git_\vartheta G$ is an isomorphism. Proposition~\ref{prop:finemoduli} shows that the ideal $B_Q$ cuts out the $\vartheta$-unstable locus in $\mathbb{A}^{Q_1}_\kk$, so we need only show that $\mathbb{V}(I_{Q})\setminus \mathbb{V}(B_Q)$ is isomorphic to $\mathbb{V}(I_{\mathscr{L}})\setminus \mathbb{V}(B_Q)$. Since $I_Q$ is prime, this holds if  $I_Q = (I_{\mathscr{L}} : B_Q^\infty)$. The second statement is immediate.
\end{proof}

\begin{remark}
\label{rem:moduletheoretic}
In light of Proposition~\ref{prop:algebra} and Remark~\ref{rem:modules}, when the map  \eqref{eqn:morphismtoModuli} is an isomorphism then we describe the Mori Dream Space $X$ as the fine moduli space $\mathcal{M}_\vartheta(\modA)$ of $\vartheta$-stable modules over $A:=\End(\bigoplus_{i\in Q_0} L_i)$ that are isomorphic as $\bigl(
\bigoplus_{i \in Q_0} \kk e_i \bigr)$-modules to $\bigoplus_{i \in Q_0} \kk e_i$.
\end{remark}

\subsection{Digression on multigraded regularity}
The notion of multigraded regularity for a sheaf with respect to a collection of basepoint-free line bundles on a projective variety, introduced by Maclagan--Smith~\cite{MaclaganSmith} in the toric case and Hering--Schenck--Smith~\cite{HSS} in general, provides a multigraded variant of Castelnuovo-Mumford regularity.  The special case of $\mathscr{O}_X$-multigraded regularity provides the key technical tool in the proof of Theorem~\ref{thm:2} (see Section~\ref{sec:theorem2} to follow), and for convenience we recall the definition and an important property.

Consider a list $\mathscr{E}=(E_1,\dots, E_\ell)$ of basepoint-free line bundles on a projective variety $X$. For an integer vector $\beta = (\beta_1,\dots, \beta_\ell)\in \ZZ^\ell$ we set $E^\beta = E_1^{\beta_1}\otimes \cdots \otimes E_\ell^{\beta_\ell}$. 

\begin{definition}
\label{def:regularity}
A line bundle $L$ on $X$ is \emph{$\mathscr{O}_X$-regular with respect to $\mathscr{E}$} if $H^i(X,L\otimes E^{-\beta})=0$ for all $i>0$ and for all $\beta\in \NN^\ell$ with $\beta_1+\cdots+\beta_\ell=i$.
\end{definition}

This notion was used implicitly (see \cite[Proposition 4.14]{CrawSmith}) in the proof of Corollary~\ref{coro:veryample}, but it becomes essential in the following section. The crucial property for us is the following sufficient condition for the multiplication map on global sections to be surjective (see \cite[Theorem~2.1(2)]{HSS} \cite[Theorem~6.9]{MaclaganSmith}):

\begin{proposition}[\cite{HSS, MaclaganSmith}]
\label{prop:regularity}
Let $F$ be a coherent sheaf on $X$. If $F$ is $\mathscr{O}_X$-regular then the map
\[
H^0(X,F\otimes E^\alpha)\otimes_\kk H^0(X,E^\beta)\longrightarrow H^0(X, F\otimes E^{\alpha+\beta})
\]
is surjective for all $\alpha, \beta\in \NN^\ell$.
\end{proposition}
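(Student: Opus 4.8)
Since Proposition~\ref{prop:regularity} is Castelnuovo--Mumford regularity theory in multigraded dress, I would run the standard Mumford-style argument, organised around two lemmas that I would isolate first: \textbf{(A)} if $F$ is $\mathscr{O}_X$-regular with respect to $\mathscr{E}$ then so is $F\otimes E_i$, for every $i$; and \textbf{(B)} if $F$ is $\mathscr{O}_X$-regular then the multiplication map $H^0(X,F)\otimes_\kk H^0(X,E_k)\to H^0(X,F\otimes E_k)$ is surjective, for every $k$. Granting these, the Proposition follows quickly. By (A) applied $\alpha_1+\cdots+\alpha_\ell$ times, $F\otimes E^\alpha$ is again $\mathscr{O}_X$-regular, so it suffices to treat the case $\alpha=0$. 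One then inducts on $|\beta|:=\beta_1+\cdots+\beta_\ell$: the cases $|\beta|\le 1$ are trivial or are exactly (B), while for $|\beta|\ge 2$ one writes $\beta=e_k+\beta'$ and observes that the composite
\[
H^0(F)\otimes H^0(E^{\beta'})\otimes H^0(E_k)\longrightarrow H^0(F\otimes E^{\beta'})\otimes H^0(E_k)\longrightarrow H^0(F\otimes E^{\beta})
\]
is surjective --- the first map by the inductive hypothesis for $\beta'$ (tensored with $H^0(E_k)$) and the second by (B) applied to the $\mathscr{O}_X$-regular sheaf $F\otimes E^{\beta'}$. As this composite factors through $H^0(F)\otimes H^0(E^{\beta})\to H^0(F\otimes E^{\beta})$ via the multiplication $H^0(E^{\beta'})\otimes H^0(E_k)\to H^0(E^{\beta})$, the latter map is surjective.

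To prove (B) I would use basepoint-freeness of $E_k$ to form the exact sequence $0\to M\to H^0(E_k)\otimes\mathscr{O}_X\to E_k\to 0$ with syzygy bundle $M$; all terms are locally free, so it remains exact after tensoring with $F$, and the cohomology sequence then shows that (B) holds as soon as $H^1(X,F\otimes M)=0$. The morphism $g\colon X\to\PP^N$ attached to the complete linear system $|E_k|$ identifies $M$ with $g^*\bigl(\Omega^1_{\PP^N}(1)\bigr)$, which carries a Koszul resolution with terms $\bigwedge^{j+1} H^0(E_k)^{\vee}\otimes E_k^{-j}$ for $j\ge 1$; being locally split, this resolution survives both pullback by $g$ and tensoring with $F$. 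A routine chase of cohomology through it reduces $H^1(X,F\otimes M)=0$ to the vanishings $H^j(X,F\otimes E_k^{-j})=0$ for $j\ge 1$, and these are exactly the $\mathscr{O}_X$-regularity hypotheses for $F$ taken with $\beta=j\,e_k$.

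The real content, and the step I expect to be the main obstacle, is (A): propagation of regularity is not formal and needs a genuine geometric input. I would prove it by induction on $\dim X$, choosing a general divisor $Y\in|E_i|$ --- general position being available since $\operatorname{char}\kk=0$ and $E_i$ is basepoint-free --- so that $Y$ meets none of the associated points in play and the sequence $0\to F\otimes E_i^{-1}\to F\to F|_Y\to 0$ is exact. One checks that $F|_Y$ is $\mathscr{O}_Y$-regular with respect to the restricted collection $(E_1|_Y,\dots,E_\ell|_Y)$, applies the inductive hypothesis on $Y$, and lifts the required vanishings $H^i(X,F\otimes E_i\otimes E^{-\gamma})=0$ (for $|\gamma|=i>0$) back to $X$ through the long exact cohomology sequence, combining the regularity of $F$ on $X$ with the conclusion just obtained on $Y$. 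As in Hering--Schenck--Smith it is cleanest to bundle (A), (B) and the global generation of $\mathscr{O}_X$-regular sheaves into a single simultaneous induction on $\dim X$, the base case $\dim X=0$ being trivial, with the only delicate point being the bookkeeping of which cohomological vanishing is invoked at each stage.
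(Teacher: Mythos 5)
The paper does not prove this statement at all: it is imported verbatim from the literature, with the ``proof'' consisting of the citations to \cite[Theorem~2.1(2)]{HSS} and \cite[Theorem~6.9]{MaclaganSmith}. Your sketch --- reduction to $\alpha=0$ via the propagation lemma (A), induction on $|\beta|$ using (B), the syzygy-bundle/Koszul argument reducing (B) to the vanishings $H^j(X,F\otimes E_k^{-j})=0$, and the general-divisor induction on dimension for (A) --- is essentially the standard Mumford-style argument that those references themselves run, and it is sound, including the points where coherence rather than local freeness of $F$ matters (local splitness of the evaluation and Koszul sequences, and choosing the divisor to avoid the associated points of $F$).
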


\subsection{Main algebraic result}
\label{sec:theorem2}
We now work towards our main algebraic result which exhibits many collections of line bundles on $X$ for which the morphism from \eqref{eqn:morphismtoModuli} is an isomorphism, thereby providing a noncommutative algebraic construction of $X$ as in Remark~\ref{rem:moduletheoretic}.
	
We first introduce the collections of interest. Choose generators $g_1,\dots, g_m\in\kk[x_1,\dots, x_d]$ of the ideal $I_X$,  set $\delta_0:= \max_{1\leq j\leq m} \big\{ \text{total degree of } g_j\big\}$ and define 
\begin{equation}
\label{eqn:delta}
\delta:=\left\{\begin{array}{cl} \delta_0/2 & \text{if }\delta_0\text{ is even;} \\ (\delta_0+1)/2 & \text{otherwise.}\end{array}\right.
\end{equation}
Consider line bundles $L_1, \dots, L_{r-2}$ on $X$ for which the corresponding rank one reflexive sheaves $E_1=\psi^{-1}(L_1),\dots, E_{r-2}=\psi^{-1}(L_{r-2})$ on $\widetilde{X}$ are basepoint-free line bundles such that the subsemigroup of $\Pic(\widetilde{X})$ generated by $E_1,\dots, E_{r-2}$ contains an ample line bundle. Choose sufficiently positive integers $\beta_1,\dots, \beta_{r-2}$ to ensure that $E:=E_1^{\beta_1}\otimes\cdots\otimes E_{r-2}^{\beta_{r-2}}$ is both normally generated and $\mathscr{O}_{\widetilde{X}}$-regular with respect to $E_1,\dots, E_{r-2}$ (see Definition~\ref{def:regularity}) and, moreover,  that $E^{2\delta}$ is very ample.  Define $E_{r-1}:= E^\delta$ and $E_r:=E^{2\delta}$ (assuming that each is distinct from $E_1,\dots, E_{r-2}$). Augment the list $L_1,\dots, L_{r-2}$ on $X$ with $L_0=\mathscr{O}_X$, $L_{r-1}:=\psi(E_{r-1})$ and $L_r:=\psi(E_r)$ to obtain a collection 
\begin{equation}
\label{eqn:finelist}
\mathscr{L}=(\mathscr{O}_{X}, L_1,\dots, L_r)
 \end{equation}
 of basepoint-free line bundles on $X$. Let $Q$ denote the quiver of sections of $\mathscr{L}$.  The corresponding collection of line bundles $\widetilde{\mathscr{L}}:= (\mathscr{O}_{\widetilde{X}}, E_1, \dots, E_r)$ on $\widetilde{X}$ takes the form of the collections constructed in the proof of \cite[Theorem~5.5]{CrawSmith}, and hence by the proof of that result we have 
\begin{equation}
\label{eqn:toricsaturation}
I_{\widetilde{Q}} = (I_{\widetilde{\mathscr{L}}} : B_Q^\infty)
\end{equation}
and the induced morphism  $\varphi_{\vert\widetilde{\mathscr{L}}\vert}\colon \widetilde{X}\to \mathbb{A}^{Q_1}\git_\vartheta G$ is a closed immersion with image $\mathbb{V}(I_{\widetilde{Q}})\git_\vartheta G$ isomorphic to $\mathcal{M}_\vartheta(Q,J_{\widetilde{\mathscr{L}}})$.

\begin{remark}
\label{rem:finelist}
\begin{enumerate}
\item It follows that each collection \eqref{eqn:finelist} determines a commutative diagram
  \begin{equation}
   \label{eqn:allGITquotients}
   \begin{CD}
\widetilde{X} @>{\varphi_{\vert\widetilde{\mathscr{L}}\vert}}>>  \mathbb{V}(I_{\widetilde{Q}})\git_\vartheta G @>{\cong}>>  \mathcal{M}_\vartheta(Q,J_{\widetilde{\mathscr{L}}})@>>>  \mathbb{A}^{Q_1}_\kk\git_\vartheta G \\
  @AAA   @AAA @AAA @|\\
  X @>{\varphi_{\vert \mathscr{L}\vert}}>> \mathbb{V}(I_{Q})\git_\vartheta G @>>> \mathcal{M}_\vartheta(Q,J_\mathscr{L})@>>> \vert\mathscr{L}\vert
   \end{CD}
 \end{equation}
 in which every morphism is a closed immersion and the indicated map is an isomorphism.
\item Since $E$ is $\mathscr{O}_{\widetilde{X}}$-regular with respect to $E_1,\dots, E_{r-2}$, since each $\beta_i>0$ and since $E_{r-1}=E_1^{\delta\beta_1}\otimes\cdots\otimes E_{r-2}^{\delta\beta_{r-2}}$, Proposition~\ref{prop:regularity} implies that the multiplication map 
\[
H^0(E_{r-1}\otimes E_i^{-1})\otimes_\kk H^0(E_{r-1})\longrightarrow  H^0(E_r\otimes E_i^{-1})
\]
is surjective for $1\leq i\leq r-1$. Since each $E_i$ is invertible, it follows from \eqref{eqn:doubledualHom} that every path in $Q$ from 0 to $r$ passes through $r-1$.
\item For clarity in what follows, we work with elements of $\kk[y_a : a\in Q_1]$ modulo the relation $\sim$ in which polynomials are equivalent when their difference lies in $I_{\widetilde{Q}}$. Since $I_{\widetilde{Q}}$ is the toric ideal of the semigroup homomorphism $\inc\oplus\div \colon \mathbb{N}^{Q_1}\to \Wt(Q)\oplus \NN^d$, monomials satisfy $y^{\textbf{m}}\sim y^{\textbf{m}^\prime}$ if and only if $\inc(\textbf{m}-\textbf{m}^\prime)=0$ and $\div(\textbf{m}-\textbf{m}^\prime)=0$, that is, $y^{\textbf{m}}\sim y^{\textbf{m}^\prime}$ if and only if they  share the same weight in $\Wt(Q)$ and the same image under $\widetilde{\Phi}$. 
\end{enumerate}
\end{remark} 

Before introducing the main result, we present a technical lemma for any list $\mathscr{L}$ as in \eqref{eqn:finelist}. Write $\chi=\sum_i \chi_i\mathbf{e}_i\in \ZZ^{Q_0}$ as $\chi=\chi^+-\chi^-$ where $\chi^{\pm}=\sum_i \chi^{\pm}_i\mathbf{e}_i\in \NN^{Q_0}$ have disjoint supports $I_\chi^+=\{i\in Q_0 : \chi_i>0\}$ and $I_\chi^-=\{i\in Q_0 : \chi_i<0\}$. In particular, $\chi\in \Wt(Q)$ gives 
\[
n_\chi := \sum_{i\in I_\chi^+} \chi^+_i = \sum_{i\in I_\chi^-} \chi^-_i\in \NN.
\]
For any spanning tree $\mathcal{T}$ in $Q$, set $y_{\mathcal{T}}:= \prod_{a\in \supp(\mathcal{T})} y_a$. Recall that any path $p$ in the quiver $Q$ defines the monomial $y_{p}:= \prod_{a\in \supp(p)} y_a$.

 \begin{lemma}
 \label{lem:hardlemma}
Let $\mathcal{T}$ be a spanning tree in $Q$ and let $\chi\in \inc(\mathbb{N}^{Q_1})\setminus\{0\}$. There exists $\mathbf{m}\in \mathbb{N}^{Q_1}$ such that for any monomial $y^{\mathbf{v}}\in \kk[y_a : a\in Q_1]$ of weight $\chi$, we have 
\begin{equation}
\label{eqn:hardlemma}
(y_\mathcal{T})^{2n_\chi}y^{\mathbf{v}} \sim y^{\mathbf{m}} \prod_{\alpha=1}^{n_\chi} y_{\gamma_\alpha}
\end{equation}
where $\gamma_1,\dots, \gamma_{n_\chi}$ are paths in $Q$, each with tail at $0$ and head at $r$.  Also, $y^{\mathbf{v}}$ divides $\prod_{\alpha=1}^{n_\chi} y_{\gamma_\alpha}$, and the resulting quotient $\widetilde{\Phi}(\prod_\alpha y_{\gamma_\alpha})/\widetilde{\Phi}(y^{\mathbf{v}})$ depends only on $\mathcal{T}$ and $\chi$.
\end{lemma}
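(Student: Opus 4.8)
The plan is to build the multiset of paths $\gamma_1,\dots,\gamma_{n_\chi}$ vertex by vertex, using the structure of $\chi\in\inc(\mathbb{N}^{Q_1})$ and the tree $\mathcal{T}$ to "complete" an arbitrary weight-$\chi$ monomial $y^{\mathbf{v}}$ into a product of full $0\to r$ paths. First I would record the combinatorial meaning of a weight-$\chi$ monomial: if $y^{\mathbf{v}}=\prod_a y_a^{v_a}$ has weight $\chi=\inc(\mathbf{v})$, then viewing $\mathbf{v}$ as a nonnegative arrow-flow on $Q$, the divergence of this flow at each vertex $i$ equals $\chi_i$. So the flow has net excess $\chi_i^+$ at vertices of $I_\chi^+$ and net deficit $\chi_i^-$ at vertices of $I_\chi^-$; since $Q$ has unique source $0$ and (by Remark~\ref{rem:finelist}(2)) every $0\to r$ path factors through $r-1$, and $0\in I_\chi^-$ can be arranged or handled separately, the idea is to route $n_\chi$ units of flow from $0$ to $r$ through the support of $\mathbf{v}$ supplemented by tree-arrows. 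Concretely, I would first prepend to $y^{\mathbf{v}}$ enough copies of the tree-path from $0$ to each vertex carrying excess flow, and append enough copies of tree-paths onward to $r$, so that the total arrow-flow becomes a nonnegative integer combination of $0\to r$ paths; the spanning-tree structure guarantees such connecting paths exist and are canonical once $\mathcal{T}$ is fixed.

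The key steps, in order: (1) Decompose $\mathbf{v}$ using the flow picture — after adding suitable tree-arrow contributions, obtain a flow of divergence $n_\chi\mathbf{e}_0 - n_\chi\mathbf{e}_r$ (all excess pushed to a source-to-$r$ configuration), which by the standard flow-decomposition into paths in an acyclic quiver decomposes as $\sum_{\alpha=1}^{n_\chi}(\text{arrow-support of }\gamma_\alpha)$ for paths $\gamma_\alpha$ from $0$ to $r$; here I use acyclicity of $Q$ (Lemma~\ref{lem:quiverprops}) so there are no cycle-components. (2) Bound the number of extra tree-arrows needed: each of the $n_\chi$ units of excess needs at most one $0\to(\cdot)$ tree-path and one $(\cdot)\to r$ tree-path, and since a spanning tree has at most $|Q_0|-1$ arrows, $2n_\chi$ copies of $y_\mathcal{T}$ suffice to supply all of them — this is exactly the exponent $2n_\chi$ in \eqref{eqn:hardlemma}, and the leftover tree-arrows not used in any $\gamma_\alpha$ get absorbed into the monomial $y^{\mathbf{m}}$. (3) Verify the equivalence $\sim$: by Remark~\ref{rem:finelist}(3), $y^{\mathbf{m}'}\sim y^{\mathbf{m}''}$ iff they have the same weight in $\Wt(Q)$ and the same image under $\widetilde\Phi$; both sides of \eqref{eqn:hardlemma} are built from the identical arrow-flow (the $2n_\chi$ tree-arrows plus the arrows of $y^{\mathbf{v}}$, merely re-grouped into $y^{\mathbf{m}}$ and the $\gamma_\alpha$'s), so they are literally equal as monomials before any rewriting, giving $\sim$ trivially — the content is purely that such a regrouping into full paths is possible. (4) Divisibility and well-definedness: $y^{\mathbf{v}}$ divides $\prod_\alpha y_{\gamma_\alpha}$ because every arrow of $\mathbf{v}$ was used (never consumed) in forming the path decomposition; and $\widetilde\Phi(\prod_\alpha y_{\gamma_\alpha})/\widetilde\Phi(y^{\mathbf{v}})$ equals $\widetilde\Phi$ of the product of the added tree-arrows, which depends only on which tree-arrows were added — a quantity determined by $\mathcal{T}$ and by the excess/deficit pattern of $\chi$, hence only on $\mathcal{T}$ and $\chi$, not on $\mathbf{v}$.

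The main obstacle I anticipate is step (1)–(2): carefully arranging the flow decomposition so that \emph{every} $0\to r$ path genuinely passes through $r-1$ and so that the tree-arrow budget is exactly $2n_\chi$ rather than something larger. One must be careful that the source $0$ may itself lie in $I_\chi^+$ or $I_\chi^-$ (the sign of $\chi_0$), and that when $\chi_0<0$ the monomial $y^{\mathbf{v}}$ already "starts" partway along paths from $0$, so the tree-completion at the source end may need up to $n_\chi$ initial segments; symmetrically at $r$. Checking that $|Q_0|-1$ tree-arrows per unit, times the two ends, never exceeds the $2n_\chi$ copies — i.e. that we never need a tree-arrow more than $2n_\chi$ times in total — requires noting that each of the $n_\chi$ flow units touches each tree-arrow at most twice (once on its way down from $0$ through the tree, once on its way to $r$), which is where the factor $2$ is genuinely used. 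A secondary subtlety is ensuring the $\gamma_\alpha$ are honest paths (consecutive arrows compose) and not just arrow-subsets; this is automatic from flow-decomposition in an acyclic quiver, but it should be stated. Once these bookkeeping points are pinned down, steps (3) and (4) are immediate from Remark~\ref{rem:finelist}(3) and the construction.
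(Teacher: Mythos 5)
There is a genuine gap at the heart of your construction, in the step where you ``append enough copies of tree-paths onward to $r$''. A spanning tree $\mathcal{T}$ rooted at $0$ supports a unique path from $0$ to every vertex, but it does \emph{not} in general support a path from an arbitrary vertex $j$ to $r$: the only vertices from which $\mathcal{T}$ reaches $r$ are those lying on the tree-path $\gamma$ from $0$ to $r$. So after you decompose $y^{\mathbf{v}}$ into paths $p_\alpha$ running from $I_\chi^-$ to $I_\chi^+$ (this part of your argument matches the paper's inductive peeling and is fine), you can prepend the tree-paths $q_i$ from $0$ to the tails, but you cannot complete the heads to $r$ inside the tree. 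Consequently your claim in step (3) that both sides of \eqref{eqn:hardlemma} are ``literally equal as monomials, giving $\sim$ trivially'' cannot be right: if it were, the lemma would be a purely combinatorial fact about flows on acyclic quivers, and it is not.

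The missing ingredient is the multigraded-regularity input that is the actual point of the lemma. The paper takes the tree-path $\gamma$ from $0$ to $r$ with label $s=\widetilde{\Phi}(y_\gamma)\in H^0(E_r)$ and uses surjectivity of the multiplication map $H^0(E_r\otimes E_j^{-1})\otimes_\kk H^0(E_j)\to H^0(E_r)$ --- Proposition~\ref{prop:regularity}, which is exactly where the special shape of $\mathscr{L}$ in \eqref{eqn:finelist} ($E$ being $\mathscr{O}_{\widetilde{X}}$-regular, $E_{r-1}=E^\delta$, $E_r=E^{2\delta}$) enters --- to factor $s$ through every vertex $j$. This produces a path $q_j^{\prime}$ from $j$ to $r$, generally \emph{not} supported on $\mathcal{T}$, together with a relation $y_{\mathcal{T}}\sim y_{q_j^{\prime}}y^{\mathbf{m}(j)}$ whose two sides are genuinely different monomials sharing the same weight and the same image under $\widetilde{\Phi}$. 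One factor $(y_\mathcal{T})^{n_\chi}$ is then spent literally, on the tree-paths $q_i$ into the tails for $i\in I_\chi^-$, while the other factor $(y_\mathcal{T})^{n_\chi}$ is spent only up to $\sim$, on the paths $q_j^{\prime}$ out of the heads for $j\in I_\chi^+$; that is the true source of the exponent $2n_\chi$, not a ``touches each tree-arrow at most twice'' count. Your description of the quotient $\widetilde{\Phi}(\prod_\alpha y_{\gamma_\alpha})/\widetilde{\Phi}(y^{\mathbf{v}})$ as $\widetilde{\Phi}$ of the product of added \emph{tree}-arrows fails for the same reason; well-definedness instead requires fixing, once and for all for each pair $(\mathcal{T},j)$, a lift of $s$ under the multiplication map. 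Without Proposition~\ref{prop:regularity} your argument cannot produce the connecting paths $q_j^{\prime}$ at all, and the statement you would be proving is false for a general quiver of sections.
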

 \begin{proof}
 We begin by constructing $\mathbf{m}\in \mathbb{N}^{Q_1}$. The spanning tree $\mathcal{T}$ supports a path $q_i$ from $0\in Q_0$ to each vertex $i\in Q_0$ and hence to each vertex in $I_\chi^-$. We may therefore write
 \begin{equation}
\label{eqn:monomialdecomp} 
(y_\mathcal{T})^{n_\chi} = y^{\mathbf{m}_1} \prod_{i\in I_\chi^-} (y_{q_i})^{\chi_i^-}
\end{equation}
where $\mathbf{m}_1 \in \mathbb{N}^{Q_1}$ depends only on $\mathcal{T}$ and $\chi$. The tree $\mathcal{T}$ supports a path $\gamma$ from 0 to $r$ whose label is a torus-invariant section $s\in H^0(E_r)$. Since $E_{r-1}$ is $\mathscr{O}_X$-regular with respect to $E_1,\dots, E_{r-2}$ and each $\beta_i>0$, Proposition~\ref{prop:regularity} implies that the multiplication map
\begin{equation}
\label{eqn:lifts}
H^0(E_{r-1}\otimes E_1^{\delta\beta_1}\otimes \cdots \otimes E_{r-2}^{\delta\beta_{r-2}}\otimes E_j^{-1}) \otimes_\kk H^0(E_j) \to H^0(E_r)
\end{equation}
is surjective. In particular, for each $j\leq r-2$ there exist sections of $E_{r}\otimes E_{j}^{-1}$ and $E_j$ whose product is $s$. Since $Q$ is a complete quiver of sections, there exists a pair of paths labelled by these sections, one from 0 to $j$ denoted $q_j^{\prime\prime}$, and the other from $j$ to $r$ denoted $q_j^{\prime}$. Concatenating gives a path $q_j^{\prime}q_j^{\prime\prime}$ from 0 to $r$ that passes via $j$ and, by Remark~\ref{rem:finelist}(2) through $r-1$, such that $y_\gamma \sim y_{q_j^{\prime}q_j^{\prime\prime}} = y_{q_j^{\prime}}y_{q_j^{\prime\prime}}$. Multiply by $y_\mathcal{T}/y_\gamma\in \kk[y_a : a\in Q_1]$ to obtain $y_{\mathcal{T}}\sim y_{q^{\prime}_j}y^{\mathbf{m}(j)}$ for some $\mathbf{m}(j)\in \NN^{Q_1}$ that depends only on $\mathcal{T}$ and $j$ (and on the lift of $s$ via \eqref{eqn:lifts}, but we fix one such lift for $\mathcal{T}$ and $i$). Applying this $\chi_j^+$-times for each $j\in I_\chi^+$ and multiplying gives 
\[
(y_{\mathcal{T}})^{n_\chi}\sim y^{\mathbf{m}_2} \prod_{j\in I_\chi^+} (y_{q^{\prime}_j})^{\chi_j^+}
\]
for some $\mathbf{m}_2 \in \mathbb{N}^{Q_1}$ depends only on $\mathcal{T}$ and $\chi$. Multiply by \eqref{eqn:monomialdecomp} to see that  
\begin{equation}
\label{eqn:sim}
(y_{\mathcal{T}})^{2n_\chi} \sim y^{\mathbf{m}} \prod_{i\in I_\chi^-} (y_{q_i})^{\chi_i^-} \prod_{j\in I_\chi^+} (y_{q^{\prime}_j})^{\chi_j^+}
\end{equation}
where $\mathbf{m}:=\mathbf{m}_1+\mathbf{m}_2\in \mathbb{N}^{Q_1}$ depends only on $\mathcal{T}$ and $\chi$.

To complete the proof, write $\mathbf{v}=\sum_{a\in Q_1} v_a \mathbf{e}_a\in \mathbb{N}^{Q_1}$ where $\inc(\mathbf{v})=\chi$. Since $\chi\neq 0$ there exists $i\in I_\chi^-$, so there exists $a_1\in Q_1$ with $\tail(a_1)=i$ such that $v_{a_1}>0$. There are two cases. If $\chi_{\head(a_1)}<0$ then $\head(a_1)\in I_\chi^+$, in which case we define $p_{1}:=a_1$ and repeat the above for $\mathbf{v}^\prime:= \mathbf{v}-\mathbf{e}_a$. Otherwise, $\chi_{\head(a_1)}\leq 0$ in which case there exists $a_2\in Q_1$ with $\tail(a_2)=\head(a_1)$ such that $v_{a_2}>0$. Since $Q$ is acyclic we can continue in this way, obtaining a path $p_{1}$ that traverses the arrows $a_1, a_2,\dots $ and satisfies $\chi_{\head(p_{1})}>0$, that is, $\head(p_1)\in I_\chi^+$. As in the first case, we may repeat the above for $\mathbf{v}^\prime:= \mathbf{v}-\sum_{a\in \supp(p_1)} \mathbf{e}_a$. In either case, the weight $\chi^\prime :=\inc(\mathbf{v}^\prime)$ satisfies $n_{\chi^\prime} = n_\chi - 1$, and we obtain by induction a set of paths $p_1,\dots, p_{n_\chi}$ satisfying $y^{\mathbf{v}} = \prod_{\alpha=1}^{n_\chi} y_{p_\alpha}$, where precisely $\chi_i^-$ of these paths have tail at $i\in I_\chi^-$ and $\chi_i^+$ have head at $i\in I_\chi^+$. Thus, for $1\leq \alpha\leq n_\chi$, there exists $i\in I_\chi^-, j\in I_\chi^+$ such that $\gamma_\alpha:=q^{\prime}_jp_\alpha q_i$ is a path in $Q$ from 0 to $r$ and
\[
\prod_{\alpha=1}^{n_\chi} y_{\gamma_\alpha} = \prod_{i\in I_\chi^-} (y_{q_i})^{\chi_i^-} \prod_{\alpha=1}^{n_\chi} y_{p_\alpha} \prod_{i\in I_\chi^+} (y_{q^{\prime}_i})^{\chi_i^+}.
\]
Note that $y^{\mathbf{v}}$ divides $\prod_{\alpha=1}^{n_\chi} y_{\gamma_\alpha}$, and multiplying \eqref{eqn:sim} by $y^{\mathbf{v}}$ establishes \eqref{eqn:hardlemma}. The quotient $\widetilde{\Phi}(\prod_\alpha y_{\gamma_\alpha})/\widetilde{\Phi}(y^{\mathbf{v}})$ equals $\widetilde{\Phi}\big((y_{\mathcal{T}})^{2n_\chi}\big)/\widetilde{\Phi}(y^{\mathbf{m}})$, so depends only on $\mathcal{T}$ and $\chi$ as required.
\end{proof}

\begin{remark}
Applying $\widetilde{\Phi}(-)$ to \eqref{eqn:hardlemma} and dividing the resulting equality by $\widetilde{\Phi}(y^{\mathbf{v}})$ shows in addition that the monomial $\widetilde{\Phi}(y^{\mathbf{m}})$ divides $\widetilde{\Phi}\big((y_{\mathcal{T}})^{2n_\chi}\big)$.
\end{remark}

We now come to our our main algebraic result. The proof is rather involved, so we sketch the outline. The goal is to show that the ideals $I_{\mathscr{L}}, I_Q$ defined by some $\mathscr{L}$ satisfy  $I_Q = (I_{\mathscr{L}} : B_Q^\infty)$. This boils down to showing that for each $f\in I_Q$, there exists both $g\in I_{\mathscr{L}}$ and $N\in \mathbb{N}$ such that $(y_\mathcal{T})^N f \sim g$, where $\sim$ is the equivalence relation from Remark~\ref{rem:finelist}(3). However, the generators of $I_{\mathscr{L}}$ from equation \eqref{eqn:IL} are rather special polynomials of the form $\sum_{p\in \Gamma} c_{p}y_p$, where $\Gamma$ is a finite set of paths with the same head and same tail, such that $\widetilde{\Phi}\big(\sum_{p\in \Gamma} c_{p} y_p\big) \in I_{X}$. The goal, then, is to find an appropriate sets of paths $\{\Gamma_i\}$ for which an element $(y_\mathcal{T})^N f$ is equivalent under $\sim$ to a polynomial combination of elements of the form $\sum_{p\in \Gamma_i} c_{p}y_p$ that satisfy $\widetilde{\Phi}\big(\sum_{p\in \Gamma_i} c_p y_{p}\big)\in I_X$. \textsc{Step 1} below introduces sets of paths $\{\Gamma_i\}$ using Lemma~\ref{lem:hardlemma}, but at this stage we know only that $(y_\mathcal{T})^N f$ is equivalent under $\sim$ to a combination of elements $\sum_{p\in \Gamma_i} c_{p}\prod_{\alpha} y_{p_\alpha}$. In \textsc{Step 2} we use multigraded regularity and the relation $\sim$ to refine every such element so that, in  \textsc{Step 3}, we can pull out a sufficiently large common factor to leave generators of the form $\sum_{p\in \Gamma_i} c_{p}y_p$ as required. At each stage, we take care to control the appropriate element of $I_X$. 

 \begin{theorem}
 \label{thm:main}
Let $L_1,\dots, L_{r-2}$ be distinct, basepoint-free line bundles on $X$ for which the corresponding reflexive sheaves $E_1=\psi^{-1}(L_1),\dots, E_{r-2}=\psi^{-1}(L_{r-2})$ on $\widetilde{X}$ are basepoint-free line bundles. If the subsemigroup of $\Pic(\widetilde{X})$ generated by $E_1,\dots, E_{r-2}$ contains an ample bundle, then there exists $L_{r-1}, L_r\in \Pic(X)$ such that  
\begin{equation}
\label{eqn:isomorphismtoModuli}
\varphi_{\vert\mathscr{L}\vert}\colon X\longrightarrow \mathcal{M}_\vartheta(\modAL)
\end{equation}
is an isomorphism for $\mathscr{L} = (\mathscr{O}_X, L_1, \dots, L_r)$ and $A=\End_{\mathscr{O}_X}(\bigoplus_{0\leq i\leq r} L_i)$. 
  \end{theorem}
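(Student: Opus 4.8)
The plan is to verify the two hypotheses of Theorem~\ref{thm:surjective} for a collection $\mathscr{L}$ constructed as in \eqref{eqn:finelist}: that $\mathscr{L}$ is very ample, and that $I_Q=(I_{\mathscr{L}}:B_Q^\infty)$. Very ampleness is quick by piggybacking on the toric case. Each member of $\widetilde{\mathscr{L}}=(\mathscr{O}_{\widetilde{X}},E_1,\dots,E_r)$ is a basepoint-free line bundle on $\widetilde{X}$: this is assumed for $E_1,\dots,E_{r-2}$, while $E_{r-1}=E^\delta$ and $E_r=E^{2\delta}$ are basepoint-free since $E$ is a product of positive powers of basepoint-free line bundles (indeed $E^{2\delta}$ is very ample). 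Hence Remark~\ref{rem:piggyback} applies, so $\varphi_{\vert\mathscr{L}\vert}$ is the restriction to $X$ of the toric morphism $\varphi_{\vert\widetilde{\mathscr{L}}\vert}\colon\widetilde{X}\to\mathbb{A}^{Q_1}_\kk\git_\vartheta G$; since the latter is a closed immersion (as recalled just after \eqref{eqn:toricsaturation}) and $X\hookrightarrow\widetilde{X}$ is a closed subscheme, $\varphi_{\vert\mathscr{L}\vert}$ is a closed immersion. Thus $\mathscr{L}$ is very ample.

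It remains to prove $I_Q=(I_{\mathscr{L}}:B_Q^\infty)$. One inclusion is automatic: $I_{\mathscr{L}}\subseteq I_Q$, the ideal $I_Q$ is prime, and $B_Q$ is not contained in $I_Q$ (the stable locus meets $\mathbb{V}(I_Q)$, since the image of $\varphi_{\vert\mathscr{L}\vert}$ is nonempty by Proposition~\ref{prop:morphismbundles}), so $(I_{\mathscr{L}}:B_Q^\infty)\subseteq(I_Q:B_Q^\infty)=I_Q$. For the reverse, since $I_Q$ is $\Wt(Q)$-graded and $(I_{\mathscr{L}}:B_Q^\infty)=\bigcap_{\mathcal{T}}(I_{\mathscr{L}}:(y_{\mathcal{T}})^\infty)$ over the spanning trees $\mathcal{T}$ of $Q$ rooted at $0$ (using Proposition~\ref{prop:finemoduli}(ii)), it suffices to show: for every $\Wt(Q)$-homogeneous $f\in I_Q$, say of weight $\chi$, and every such $\mathcal{T}$, there exists $N$ with $(y_{\mathcal{T}})^N f\in I_{\mathscr{L}}$. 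We may assume $\chi\neq0$ (a weight-$0$ homogeneous element of $I_Q$ is a scalar, hence $0$). From $\Phi(f)=0$ we get $\widetilde{\Phi}(f)\in I_X$. Applying Lemma~\ref{lem:hardlemma} to each monomial of $f$ and writing $\sim$ for congruence modulo $I_{\widetilde{Q}}$ as in Remark~\ref{rem:finelist}(3), we obtain $(y_{\mathcal{T}})^{2n_\chi}f\sim y^{\mathbf{m}}h$, where $\mathbf{m}$ and a further monomial $Q_0$ depend only on $\mathcal{T}$ and $\chi$, where every monomial of the $\Wt(Q)$-homogeneous polynomial $h$ is a product of $n_\chi$ path-monomials $y_\gamma$ with $\gamma$ running from $0$ to $r$, and where $\widetilde{\Phi}(h)=Q_0\,\widetilde{\Phi}(f)\in I_X$ (recall $I_X$ is prime, as $\Cox(X)$ is a domain). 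Because $I_{\widetilde{Q}}=(I_{\widetilde{\mathscr{L}}}:B_Q^\infty)$ by \eqref{eqn:toricsaturation} and $I_{\widetilde{\mathscr{L}}}\subseteq I_{\mathscr{L}}$, a high enough power of $y_{\mathcal{T}}\in B_Q$ carries $(y_{\mathcal{T}})^{2n_\chi}f-y^{\mathbf{m}}h$ into $I_{\mathscr{L}}$; so the task reduces to showing $h\in I_{\mathscr{L}}$.

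This reduction is the crux. If $n_\chi=1$ it is immediate: $h=\sum_\mu c_\mu y_{\gamma^{(\mu)}}$ is a $\kk$-linear combination of path-monomials for paths $\gamma^{(\mu)}$ sharing tail $0$ and head $r$, with $\widetilde{\Phi}(h)\in I_X$, which is exactly a generator of $I_{\mathscr{L}}$ in the sense of \eqref{eqn:IL}. The genuine difficulty is $n_\chi\geq2$, where each monomial of $h$ is a product of several path-monomials and all but one of the factors must be stripped away. The proposed mechanism is to decompose $\widetilde{\Phi}(h)=\sum_j g_jq_j$ with the $g_j$ a chosen set of generators of $I_X$ (of total degree at most $\delta_0\leq2\delta$) and to use that every path from $0$ to $r$ factors through $r-1$ (Remark~\ref{rem:finelist}(2)), so that its label is a product of a section of $E_{r-1}=E^\delta$ and a section of $E_r\otimes E_{r-1}^{-1}=E^\delta$. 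Invoking the normal generation of $E$ and its $\mathscr{O}_{\widetilde{X}}$-regularity with respect to $E_1,\dots,E_{r-2}$ through Proposition~\ref{prop:regularity}, one rewrites each monomial $\prod_{\alpha=1}^{n_\chi}y_{\gamma_\alpha}$ of $h$, modulo $I_{\widetilde{Q}}$, so that $n_\chi-1$ of the path-factors collapse to a single monomial $y^{\mathbf{N}}$ common to all monomials of $h$, the surviving factor being a path-monomial $y_{\delta}$ (from $0$ to $r$) with $\widetilde{\Phi}(y_{\delta})$ equal to a fixed monomial times $\widetilde{\Phi}$ of the original monomial. Then $h\sim y^{\mathbf{N}}\bigl(\sum_\mu c_\mu y_{\delta^{(\mu)}}\bigr)$, the parenthesised sum is a generator of $I_{\mathscr{L}}$, and hence $h\in I_{\mathscr{L}}$. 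Carrying out this refinement while controlling at every step exactly which element of $I_X$ is produced, and checking that each rewrite holds modulo $I_{\widetilde{Q}}$, is the main obstacle; this is Steps~2 and 3 of the outline preceding the statement.

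Once $\mathscr{L}$ is very ample and $I_Q=(I_{\mathscr{L}}:B_Q^\infty)$, Theorem~\ref{thm:surjective} yields that $\varphi_{\vert\mathscr{L}\vert}\colon X\to\mathcal{M}_\vartheta(Q,J_{\mathscr{L}})$ is an isomorphism, which by Proposition~\ref{prop:algebra} and Remark~\ref{rem:moduletheoretic} is precisely the assertion that $\varphi_{\vert\mathscr{L}\vert}\colon X\to\mathcal{M}_\vartheta(\modAL)$ is an isomorphism; the match between the tautological bundles and the line bundles of $\mathscr{L}$ comes from Theorem~\ref{thm:1}.
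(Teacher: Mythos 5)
Your framing is sound and matches the paper's strategy exactly: very ampleness of $\mathscr{L}$ via the toric closed immersion $\varphi_{\vert\widetilde{\mathscr{L}}\vert}$ of Remark~\ref{rem:piggyback}/\ref{rem:finelist}(1), the easy inclusion $(I_{\mathscr{L}}:B_Q^\infty)\subseteq I_Q$ from primeness of $I_Q$, the reduction of the reverse inclusion to $(y_{\mathcal{T}})^N f\in I_{\widetilde{Q}}+I_{\mathscr{L}}$ using \eqref{eqn:toricsaturation} and $I_{\widetilde{\mathscr{L}}}\subseteq I_{\mathscr{L}}$, and the first application of Lemma~\ref{lem:hardlemma} to reach $(y_{\mathcal{T}})^{2n_\chi}f\sim y^{\mathbf{m}}h$ with $\widetilde{\Phi}(h)\in I_X$. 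Your observation that the weight-zero case is vacuous (an acyclic quiver has no nonconstant weight-zero monomials) is a correct point the paper leaves implicit. But the argument stops precisely where the theorem actually lives. You write that stripping $n_\chi-1$ of the path-factors from each monomial of $h$ ``is the main obstacle'' and defer it to ``Steps~2 and 3 of the outline'' --- that is, you describe the plan for the crux and do not execute it. Everything that makes the hypotheses of the theorem necessary sits inside that deferred step: the expansion $\widetilde{\Phi}(h)=\sum_{i,j}h_{i,j}g_i$ in generators of $I_X$; the use of $\mathscr{O}_{\widetilde{X}}$-regularity of $E_{r-1}$ to factor each torus-invariant section $h_{i,j}g_{i,k}\in H^0(E_r^{n_\chi})$ as a product of $n_\chi$ sections of $E_r$, hence as $\widetilde{\Phi}$ of a product of path-monomials from $0$ to $r$; the bound $\deg g_{i,k}\le\delta_0\le 2\delta$ from \eqref{eqn:delta}, which is what lets one choose $2\delta$ sections of $E$ whose product is divisible by $g_{i,k}$ and thereby isolate a single path $q_{i,j,k}$ carrying the generator; the transfer from $k=1$ to $k>1$ using that the terms $g_{i,k}$ of a fixed $T$-homogeneous generator $g_i$ share the same degree; and the check that $\widetilde{\Phi}(y^{\mathbf{m}'_{i,j}})\notin I_X$ so that $\widetilde{\Phi}(\sum_k c_{i,j,k}y_{q_{i,j,k}})\in I_X$ can be deduced from primeness. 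None of this appears, so the proof is incomplete.

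There is also a substantive inaccuracy in the form you predict for the output of the refinement. You claim $h\sim y^{\mathbf{N}}\bigl(\sum_\mu c_\mu y_{\delta^{(\mu)}}\bigr)$ with a \emph{single} common monomial $y^{\mathbf{N}}$ and a \emph{single} generator of $I_{\mathscr{L}}$ in the bracket. That is not what the argument produces, and in general cannot be: the monomials of $h$ do not survive the rewriting individually; rather one replaces $h$, modulo $I_{\widetilde{Q}}$, by a sum indexed by the terms $h_{i,j}g_i$ of the $I_X$-expansion, and the common-factor extraction happens separately for each pair $(i,j)$, yielding $h\sim\sum_{i,j}y^{\mathbf{m}'_{i,j}}\bigl(\sum_k c_{i,j,k}y_{q_{i,j,k}}\bigr)$ with possibly distinct monomials $y^{\mathbf{m}'_{i,j}}$. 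Each bracketed sum is a generator of $I_{\mathscr{L}}$, which is still enough to conclude $h\in I_{\widetilde{Q}}+I_{\mathscr{L}}$ (note: not $h\in I_{\mathscr{L}}$ as you state --- the residual $I_{\widetilde{Q}}$ must be absorbed by a further power of $y_{\mathcal{T}}$ via \eqref{eqn:toricsaturation}, as your own setup anticipates). To complete the proof you would need to supply the two-stage rewriting in full, keeping track at each stage of which element of $I_X$ is being realised and verifying each equivalence modulo $I_{\widetilde{Q}}$ via equality of weights and of images under $\widetilde{\Phi}$.
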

\begin{proof} 
Define the line bundles $L_{r-1}$ and $L_r$ as described at the start of this section to produce a collection $\mathscr{L}$ of the form \eqref{eqn:finelist}. Remark~\ref{rem:finelist}(1) shows that $\mathscr{L}$ is very ample, so by Theorem~\ref{thm:surjective} it suffices to prove that $I_Q = (I_{\mathscr{L}} : B_Q^\infty)$. One inclusion is straightforward: let $f\in (I_{\mathscr{L}} : B_Q^\infty)$. Since $I_{\mathscr{L}}\subseteq I_Q$ and hence $(I_{\mathscr{L}} : B_Q^\infty)\subseteq (I_{Q} : B_Q^\infty)$, we have that $(y_\mathcal{T})^Nf\in I_Q$ for some spanning tree $\mathcal{T}$ and $N\in \mathbb{N}$. Since $I_Q$ is prime, we have either $f\in I_Q$ as required, or $B_Q\subseteq I_Q$. This latter conclusion would force the contradiction $\mathbb{V}(I_Q)\git_\vartheta G=\emptyset$, so $(I_{\mathscr{L}} : B_Q^\infty)\subseteq I_Q$. 

For the opposite inclusion, let $f\in I_Q$ be a homogeneous generator of weight $\chi\in \inc(\mathbb{N}^{Q_1})\setminus\{0\}$ and let $\mathcal{T}$ be a spanning tree in $Q$. If we can show that $(y_\mathcal{T})^N f \in I_{\widetilde{Q}} + I_\mathscr{L}$ for some $N\in \mathbb{N}$, then by increasing $N$ if necessary and applying the equality $I_{\widetilde{Q}} = (I_{\mathscr{\widetilde{L}}} : B_Q^\infty)$ from \eqref{eqn:toricsaturation}, we deduce that $(y_\mathcal{T})^N f \in I_{\mathscr{\widetilde{L}}} + I_{\mathscr{L}} = I_{\mathscr{L}}$ and hence $f\in (I_{\mathscr{L}} : B_Q^\infty)$ as required.   In light of Remark~\ref{rem:finelist}(3), we need only find $g\in I_{\mathscr{L}}$ such that $(y_\mathcal{T})^N f \sim g$, and we achieve this for $N=2n_\chi$.

We proceed in four steps:

\smallskip

\textsc{Step 1:} \emph{Introduce a set of paths $\{\gamma_{\alpha, \beta}\}$ in $Q$ such that 
\begin{equation}
\label{eqn:step1}
(y_\mathcal{T})^{2n_\chi}f\sim y^{\mathbf{m}}\bigg(\sum_\beta c_\beta  \prod_{\alpha=1}^{n_\chi} y_{\gamma_{\alpha,\beta}}\bigg)
\end{equation}
for some $\mathbf{m}\in \NN^{Q_1}$ and $c_\beta\in \kk$, where in addition we have $\widetilde{\Phi}\big(\sum_\beta c_\beta \prod_{1\leq \alpha\leq n_\chi} y_{\gamma_{\alpha, \beta}}\big)\in I_X$.}

\smallskip

Decompose $f$ as a sum of terms $f=\sum_{\beta} c_\beta y^{\mathbf{v}_\beta}$ for $c_\beta\in \kk$ and $\mathbf{v}_\beta\in \mathbb{N}^{Q_1}$ satisfying $\chi = \inc(\mathbf{v}_\beta)$. Since $\chi\neq 0$ we apply Lemma~\ref{lem:hardlemma} to each monomial $y^{\mathbf{v}_\beta}$ to obtain $(y_\mathcal{T})^{2n_\chi}y^{\mathbf{v}_\beta} \sim y^{\mathbf{m}} \prod_{\alpha=1}^{n_\chi} y_{\gamma_{\alpha,\beta}}$, where $\mathbf{m}$ depends only on  $\mathcal{T}$ and $\chi$ (not on  $\beta$) and where each $\gamma_{\alpha,\beta}$ is a path in $Q$ with tail at $0$ and head at $r$.  This gives \eqref{eqn:step1}. Also, the quotient $x^\mathbf{q}:= \widetilde{\Phi}(\prod_\alpha y_{\gamma_{\alpha,\beta}})/\widetilde{\Phi}(y^{\mathbf{v}_\beta})\in \kk[x_1,\dots,x_d]$ depends only on  $\mathcal{T}$ and $\chi$ (not on  $\beta$). Since $f\in I_Q$, we have $\widetilde{\Phi}(f)\in I_X$ and hence we deduce that $\widetilde{\Phi}\big(\sum_\beta c_\beta \prod_{\alpha=1}^{n_\chi} y_{\gamma_{\alpha, \beta}}\big) = x^{\mathbf{q}}\big(\sum_\beta c_\beta \widetilde{\Phi}(y^{\mathbf{v}_\beta})\big) = x^{\mathbf{q}}\widetilde{\Phi}(f)\in I_X$ as required.
\medskip

\textsc{Step 2:} \emph{Introduce a second set of paths $\{p_{i,j,k,\ell}\}$ in $Q$ such that 
\[
\sum_\beta c_\beta  \prod_{\alpha=1}^{n_\chi} y_{\gamma_{\alpha,\beta}} \sim \sum_{i,j,k} c_{i,j,k}  \prod_{\ell=1}^{n_\chi} y_{p_{i,j,k,\ell}}
\]
for some $c_{i,j,k}\in \kk$, where for each $i, j$ we have $\widetilde{\Phi}\big(\sum_{k} c_{i,j,k}  \prod_{1\leq \ell \leq n_\chi} y_{p_{i,j,k,\ell}}\big)\in I_X$.}

\smallskip

In light of Step 1, expand $\widetilde{\Phi}\big(\sum_\beta c_\beta \prod_{\alpha=1}^{n_\chi} y_{\gamma_{\alpha, \beta}}\big) = \sum_{i,j}h_{i,j}g_i$ in terms of generators of $I_X$, where each $h_{i,j}\in \kk[x_1,\dots,x_d]$ is a nonzero term. Since $\widetilde{\Phi}$ is equivariant and $y_{\gamma_{\alpha, \beta}}$ has weight $\mathbf{e}_r-\mathbf{e}_0\in \Wt(Q)$, we may assume without loss of generality that each term in this expansion has degree $\widetilde{\pic}(n_\chi (\mathbf{e}_r-\mathbf{e}_0))=E_r^{n_\chi}$. Thus, expanding each $g_i:=g_{i,1}+\cdots +g_{i,t_i}$ as a sum of terms for some $t_i\in \NN$ gives $ h_{i,j} g_{i,k} \in H^0(E_r^{n_\chi})$  for all $i,j,k$. Since $E_{r-1}$ is $\mathscr{O}_X$-regular with respect to $E_1,\dots, E_{r-2}$ and $E_r=E_{r-1}^2$, Proposition~\ref{prop:regularity} implies that the multiplication map 
\[
H^0(E_{r}) \otimes_\kk \dots \otimes_\kk H^0(E_{r}) \to H^0(E_r^{n_\chi})
\]
is surjective, so for each $i,j,k$ there exists $c_{i,j,k}\in \kk$ and torus-invariant sections $s_{i,j,k,\ell}\in H^0(E_r)$ for $1\leq \ell\leq n_\chi$ such that $h_{i,j}g_{i,k} = c_{i,j,k}\prod_{\ell = 1}^{n_\chi} s_{i,j,k,\ell}$.  Since $Q$ is a quiver of sections, there exists a path $p_{i,j,k,\ell}$ in $Q$ from 0 to $r$ whose label is the torus-invariant section $s_{i,j,k,\ell}$, that is, $\widetilde{\Phi}(y_{p_{i,j,k,\ell}}) = s_{i,j,k,\ell}$. For fixed $i, j$, we therefore obtain
\begin{equation}
\label{eqn:hijgik}
h_{i,j}g_{i,k} = c_{i,j,k} \widetilde{\Phi}\bigg(\prod_{\ell = 1}^{n_\chi} y_{p_{i,j,k,\ell}}\bigg).
\end{equation}
Summing over $1\leq k\leq t_i$ gives $h_{i,j}g_i = \widetilde{\Phi}\big(\sum_{k} c_{i,j,k}  \prod_{1\leq \ell \leq n_\chi} y_{p_{i,j,k,\ell}}\big)$, and by summing this new expression over all $i,j$ we deduce that 
\begin{equation}
\label{eqn:Phiequality}
\widetilde{\Phi}\bigg(\sum_\beta c_\beta \prod_{\alpha=1}^{n_\chi} y_{\gamma_{\alpha, \beta}}\bigg) = \widetilde{\Phi}\bigg(\sum_{i,j,k} c_{i,j,k} \prod_{\ell=1}^{n_\chi} y_{p_{i,j,k,\ell}}\bigg) 
\end{equation}
lies in $I_X$ by Step 1. The main statement of Step 2 now follows from Remark~\ref{rem:finelist}(3) because these polynomials also share the same weight in $\Wt(Q)$, namely $n_\chi(\mathbf{e}_r-\mathbf{e}_0)$. 

\medskip

\textsc{Step 3:} \emph{Introduce a third set of paths $\{q_{i,j,k}\}$ in $Q$ such that
\begin{equation}
\label{eqn:step3} 
\prod_{\ell=1}^{n_\chi} y_{p_{i,j,k,\ell}}\sim  y^{\mathbf{m}^\prime_{i,j}} y_{q_{i,j,k}}
\end{equation}
for some $\mathbf{m}^\prime_{i,j}\in \NN^{Q_1}$, where for each $i,j$ we have $\widetilde{\Phi}\big(\sum_k c_{i,j,k} y_{q_{i,j,k}}\big)\in I_X$.}

\smallskip

Fix $i, j$ and simplify notation by suppressing dependence on $i, j$. To this end, write $c_k:=c_{i,j,k}$, $y^{\mathbf{v}_k}:=  \prod_{1\leq \ell \leq n_\chi} y_{p_{i,j,k,\ell}}$, $h=h_{i,j}$ and $g_k=g_{i,k}$, in which case \eqref{eqn:hijgik} is simply $hg_k = c_k\widetilde{\Phi}(y^{\mathbf{v}_k})$. The map $\widetilde{\Phi}$ is equivariant and sends monomials to monomials, so $\frac{1}{c_k}hg_k\in H^0(E_r^{n_\chi})$ defines a torus-invariant section. Since $E=E_1^{\beta_1}\otimes \cdots \otimes E_{r-2}^{\beta_{r-2}}$ is $\mathscr{O}_X$-regular with respect to $E_1,\dots, E_{r-2}$ and $E_r=E_{r-1}^2 = E^{2\delta}$, Proposition~\ref{prop:regularity} implies that the multiplication map 
\[
H^0(E) \otimes_\kk \dots \otimes_\kk H^0(E) \to H^0(E_r^{n_\chi})
\]
 is surjective, so $\frac{1}{c_k}hg_k$ is equal to the product of $2\delta n_\chi$ torus-invariant sections of $E$. Since $g_k$ is a term of a generator of $I_X$, its total degree is at most $\delta_0\leq 2\delta$ by \eqref{eqn:delta}, so we may choose $2\delta$ of these sections $s_{k,1},\dots, s_{k,2\delta}\in H^0(E)$ such that $g_k$ divides $\prod_{1\leq \mu \leq 2\delta} s_{k,\mu}\in H^0(E_{r})$. We now apply the above only for $k=1$. Since $Q$ is a quiver of sections, there exists a path $q_{1}$ in $Q$ from 0 to $r$ satisfying $\widetilde{\Phi}(y_{q_{1}}) = \prod_{1\leq \mu \leq 2\delta} s_{1,\mu}$, so the section $hg_1/c_1\widetilde{\Phi}(y_{q_{1}})\in H^0(E_{r}^{n_\chi-1})$ is torus-invariant. Surjectivity of the multiplication map $H^0(E_r) \otimes_\kk \dots \otimes_\kk H^0(E_r) \to H^0(E_r^{n_\chi-1})$ determines $n_\chi-1$ sections of $E_r$ and hence paths $q^\prime_1,\dots, q^\prime_{n_\chi-1}$ in $Q$ from 0 to $r$ labelled by these sections such that $\widetilde{\Phi}(y^{\mathbf{m}^\prime}) =  hg_1/c_1\widetilde{\Phi}(y_{q_{1}})$ for $y^{\mathbf{m}^\prime}:=\prod_{1\leq \nu \leq n_\chi-1} y_{q^\prime_\nu}$. In particular, 
\begin{equation}
\label{eqn:v1q1}
\widetilde{\Phi}(y^{\mathbf{v}_1}) = \frac{hg_1}{c_1} = \widetilde{\Phi}(y^{\mathbf{m}^\prime}y_{q_1}).
\end{equation}
Both monomials $y^{\mathbf{v}_1}$ and $y^{\mathbf{m}^\prime}y_{q_1}$ have weight $n_\chi(\mathbf{e}_r-\mathbf{e}_0)\in \Wt(Q)$, hence $y^{\mathbf{v}_1} \sim y^{\mathbf{m}^\prime}y_{q_1}$. Now reintroduce the indices $i,j$, setting $\mathbf{m}^\prime_{i,j}:= \mathbf{m}^\prime$ and $q_{i,j,1}:=q_1$, to obtain \eqref{eqn:step3} for $k=1$.

For $k>1$, we have $hg_k = c_k\widetilde{\Phi}(y^{\mathbf{v}_k})$. For $1\leq i\leq m$, the generator $g_i$ of $I_X$ is $T$-homogeneous, so $g_k:=g_{i,k}$ and $g_1:=g_{i,1}$ have the same degree in $\ZZ^\rho$. Since $g_1$ divides $\widetilde{\Phi}(y_{q_{1}})\in H^0(E_r)$, it follows that the term $\widetilde{\Phi}(y_{q_{1}})g_k/g_1$ also has degree $E_r$. Divide by its coefficient $c_k/c_1\in \kk$ to obtain a torus-invariant section $\widetilde{\Phi}(y_{q_{1}})c_1g_k/c_k g_1\in H^0(E_r)$ which in turn determines a path $q_k$ in $Q$ with tail at 0 and head at $r$ for which $\widetilde{\Phi}(y_{q_k}) = \widetilde{\Phi}(y_{q_{1}})c_1g_k/c_k g_1$. Then  \eqref{eqn:v1q1} gives
\[
\widetilde{\Phi}(y^{\mathbf{v}_k}) = hg_1\cdot\frac{g_k}{c_kg_1} =  c_1\widetilde{\Phi}(y^{\mathbf{m}^\prime})\widetilde{\Phi}(y_{q_1})\cdot \frac{g_k}{c_kg_1}  = \widetilde{\Phi}(y^{\mathbf{m}^\prime}y_{q_k}).
\]
It follows that the monomials $y^{\mathbf{v}_k}$ and $y^{\mathbf{m}^\prime}y_{q_k}$ have weight $n_\chi(\mathbf{e}_r-\mathbf{e}_0)$, hence $y^{\mathbf{v}_k} \sim y^{\mathbf{m}^\prime}y_{q_k}$, and we reintroduce the indices $i,j$, setting $q_{i,j,k}:=q_k$, to obtain \eqref{eqn:step3} for all $k$. Then
\[
\widetilde{\Phi}(y^{\mathbf{m}^\prime_{i,j}}) \widetilde{\Phi}\bigg(\sum_{k} c_{i,j,k}  y_{q_{i,j,k}}\bigg) =  \widetilde{\Phi}\bigg(\sum_{k} c_{i,j,k}  \prod_{\ell=1}^{n_\chi} y_{p_{i,j,k,\ell}}\bigg)\in I_X
 \]
holds for every $i,j$ by combining \eqref{eqn:step3} and Step 2. The ideal $I_X$ does not contain the monomial $\widetilde{\Phi}(y^{\mathbf{m}^\prime_{i,j}})$ for any $i,j$, otherwise it would contain a variable of $\kk[x_1,\dots,x_d]$ because $I_X$ is prime, and we may assume this is not the case by redefining $\Cox(X)$ from \eqref{eqn:CoxX} using a polynomial ring in $d-1$ variables. Thus,  $\widetilde{\Phi}\big(\sum_{k} c_{i,j,k}  y_{q_{i,j,k}}\big)\in I_X$ for every $i, j$ as required.
 
\medskip

\textsc{Step 4:} \emph{Establish that $(y_\mathcal{T})^{2n_\chi} f \in I_{\widetilde{Q}} + I_\mathscr{L}$ as required by proving that
\begin{equation}
\label{eqn:step4}
(y_\mathcal{T})^{2n_\chi}f\sim y^{\mathbf{m}}\bigg(\sum_{i,j} y^{\mathbf{m}^\prime_{i,j}} \Big(\sum_{k} c_{i,j,k} y_{q_{i,j,k}}\Big)\bigg).
\end{equation}
}

\smallskip

Relation \eqref{eqn:step4} is immediate from Steps 1-3. For every $i, j$ we also have $\sum_{k} c_{i,j,k} y_{q_{i,j,k}}\in I_{\mathscr{L}}$ by Step 3, so the right hand side of \eqref{eqn:step4} also lies in $I_\mathscr{L}$. The definition of $\sim$ given in Remark~\ref{rem:finelist}(3) then implies $(y_\mathcal{T})^{2n_\chi} f \in I_{\widetilde{Q}} + I_\mathscr{L}$. This completes the proof of Theorem~\ref{thm:main}.
\end{proof}

\begin{example}
\label{ex:Grassmannian}
Consider the Mori Dream Space $X=\Gr(2,4)$, where
\[
\Cox(X) = \frac{\kk[x_1,\dots, x_6]}{(x_1x_6 - x_2x_5 + x_3x_4)}
\]
and $\Pic(X)\cong \ZZ$. Let $L$ denote the very ample line bundle whose sections determine the Pl\"{u}cker embedding in the ambient toric variety $\widetilde{X}=\mathbb{P}^5$. Consider Theorem~\ref{thm:main} for $L_1:= L$. The generator of $I_X$ has total degree 2, so by \eqref{eqn:delta} we have $\delta = 1$.  To construct the collection $\mathscr{L}$ from \eqref{eqn:finelist}, note that $E_1=\mathscr{O}_{\mathbb{P}^5}(1)$ is normally generated and $\mathscr{O}_{\mathbb{P}^5}$-regular with respect to $E_1$. In this case, $E_{r-1}:=E_1^\delta = E_1$ and we need only add $E_r:=E^{2\delta}=\mathscr{O}_{\mathbb{P}^5}(2)$ to the collection to obtain $\widetilde{\mathscr{L}}=(\mathscr{O}_{\mathbb{P}^5}, \mathscr{O}_{\mathbb{P}^5}(1), \mathscr{O}_{\mathbb{P}^5}(2))$ and hence $\mathscr{L} = (\mathscr{O}_X, L, L^2)$. The map $\varphi_{\vert \mathscr{L}\vert} \colon \Gr(2,4)\to \lvert \mathscr{L}\vert$ is a closed immersion with image $\mathbb{V}(I_{Q})\git_\vartheta G$ by Proposition~\ref{prop:morphismbundles}\one\ and Theorem~\ref{thm:closedimmersion}\three.  The quiver of sections $Q$ is shown in Figure~\ref{fig:Grassmannian}, 
  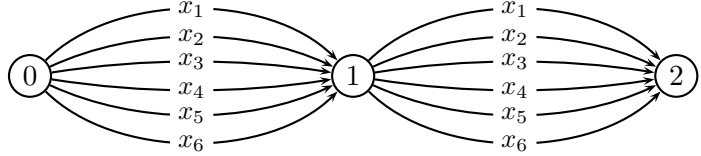
\begin{figure}[!ht]
    \centering
    \mbox{
        \psset{unit=0.85cm}
        \begin{pspicture}(10,2.1) 
            \cnodeput(0,1){A}{0}
            \cnodeput(5,1){C}{1} 
            \cnodeput(10,1){E}{2}
            \psset{nodesep=0pt}
            \nccurve[angleA=45,angleB=135]{->}{A}{C}\lput*{:U}{\small{$x_1$}}
            \nccurve[angleA=25,angleB=155]{->}{A}{C}\lput*{:U}{\small{$x_2$}}
            \nccurve[angleA=9,angleB=171]{->}{A}{C}\lput*{:U}{\small{$x_3$}}
            \nccurve[angleA=351,angleB=189]{->}{A}{C}\lput*{:U}{\small{$x_4$}}
            \nccurve[angleA=335,angleB=205]{->}{A}{C}\lput*{:U}{\small{$x_5$}}  
            \nccurve[angleA=315,angleB=225]{->}{A}{C}\lput*{:U}{\small{$x_6$}}
            \nccurve[angleA=45,angleB=135]{->}{C}{E}\lput*{:U}{\small{$x_1$}}
            \nccurve[angleA=25,angleB=155]{->}{C}{E}\lput*{:U}{\small{$x_2$}}
            \nccurve[angleA=9,angleB=171]{->}{C}{E}\lput*{:U}{\small{$x_3$}}
            \nccurve[angleA=351,angleB=189]{->}{C}{E}\lput*{:U}{\small{$x_4$}}
            \nccurve[angleA=335,angleB=205]{->}{C}{E}\lput*{:U}{\small{$x_5$}}  
            \nccurve[angleA=315,angleB=225]{->}{C}{E}\lput*{:U}{\small{$x_6$}}
         \end{pspicture}
         }
   \caption{The quiver of sections $Q$ for the collection $\mathscr{L}$ \label{fig:Grassmannian}}
  \end{figure}
where we list the arrows with tail at 0 as $y_1,\dots, y_{6}$ where $\div(y_i) = x_i$ for $1\leq i\leq 6$, and those with tail at 1 as $y_7,\dots, y_{12}$ where $\div(y_{i+6}) = x_i$ for $1\leq i\leq 6$. The algorithm from Winn~\cite[Chapter~5]{Winn} gives 
\[
I_{\mathscr{L}}= I_{Q} = \left(\begin{array}{c} 
 y_{1}y_{8}-y_{2}y_{7}, y_{1}y_{9}-y_{3}y_{7}, y_{1}y_{10}-y_{4}y_{7},y_1y_{11}-y_{5}y_{7}\\ y_1y_{12}-y_{6}y_{7}, y_2y_{9}-y_{3}y_{8}, y_2y_{10}-y_4y_{8},y_2y_{11}-y_5y_{8} \\ y_2y_{12}-y_6y_{8}, y_3y_{10}-y_4y_{9}, y_3y_{11}-y_5y_{9}, y_3y_{12}-y_6y_{9} \\
 y_4y_9-y_2y_{11}+y_1y_{12}, y_4y_{11}-y_5y_{10}, y_4y_{12}-y_6y_{10}, y_5y_{12}-y_6y_{11} \\
 y_9y_{10}-y_8y_{11}+y_7y_{12},  y_3y_{10}-y_2y_{11}+y_1y_{12},  y_3y_4-y_2y_5+y_1y_6
 \end{array}\right),
 \]
so the natural inclusion map $\mathbb{V}(I_{Q})\git_\vartheta G \to \mathbb{V}(I_{\mathscr{L}})\git_\vartheta G$ from diagram \eqref{eqn:allGITquotients} is an isomorphism. Thus,  $\varphi_{\vert\mathscr{L}\vert}\colon \Gr(2,4)\rightarrow \mathcal{M}_\vartheta(\modA)$ is an isomorphism for $A=\End_{\mathscr{O}_X}(\bigoplus_{0\leq i\leq 2} L_i)$.
\end{example}

\begin{remark}
We did not have to saturate $I_{\mathscr{L}}$ by $B_Q$ in order to obtain $I_Q$ in Example~\ref{ex:Grassmannian}, but this is necessary for the choice $L_1=L^2$. Indeed, the collection in this case is $\mathscr{L}=(\mathscr{O}_X, L^2, L^4)$, the ideals $I_{\mathscr{L}}$ and $I_Q$ are computed explicitly by Winn~\cite[Section~6.4]{Winn} and are shown to satisfy  $I_{Q}=(I_{\mathscr{L}} : B_Q^\infty)$ even though $I_Q\neq I_{\mathscr{L}}$. The ideals have very many generators, so we do not reproduce them here. Instead we illustrate the saturation aspect of the construction with a pair of examples in the next section. 
\end{remark} 

\section{Reconstructing del Pezzo surfaces from a tilting bundle}
\label{sec:5}
As an application of our main results we illustrate how to reconstruct del Pezzo surfaces directly from the bound quiver of sections of full, strongly exceptional collection of line bundles.

\subsection{Tilting bundles on del Pezzo surfaces}
Let $X$ be a smooth projective variety over $\kk$ and write $\coh(X)$ for the category of coherent sheaves on $X$. For any vector bundle $\mathscr{T}$ on $X$, let $A:=\End_{\mathscr{O}_{X}}(\mathscr{T})$ denote its endomorphism algebra and $\modA$ the abelian category of finitely generated right $A$-modules. We say that $\mathscr{T}$ is a \emph{tilting bundle} on $X$ if the functor
\[
 \Rderived\Hom(\mathscr{T},-)\colon
 D^b\big(\!\coh(X)\big)\longrightarrow D^b\big(\!\modA\big)
 \]
 is an exact equivalence of bounded derived categories. If $\mathscr{T}$ decomposes as a direct sum of line bundles $\mathscr{T}=\bigoplus_{0\leq i\leq r} L_i$ (we need not assume that each $L_i$ has rank one, but we choose to), then after reordering if necessary, the collection $(L_0,L_1,\dots, L_r)$ is a full, strongly exceptional collection on $X$. That is,  the line bundles in the collection generate $D^b(\coh(X))$ as a triangulated category and they satisfy appropriate $\Ext$-vanishing conditions, namely, that $\Hom(L_j,L_i)=0$ for $j>i$ and that $\Ext^k(L_i,L_j) = 0$ for $k>0$ and all $0\leq i,j\leq r$.

For $0\leq k\leq 8$, let $X_k$ denote the del Pezzo surface obtained as the blow-up of $\mathbb{P}^2_\kk$ at $k$ points in general position. The Picard group $\ZZ^{k+1}$ has a basis given by $H$, the pullback to $X_d$ of the hyperplane class on $\mathbb{P}^2_\kk$,  together with the $k$ exceptional curves $R_1, \dots, R_k$. Consider the sequence of basepoint-free line bundles 
\begin{equation}
\label{eqn:fsec}
\mathscr{L}_k :=\big(\mathscr{O}_{X_{k}}, H, 2H-R_1, \ldots, 2H-R_k, 2H\big)
\end{equation}
on $X_k$, and write $L_0=\mathscr{O}_{X_{k}}$, $L_1= H$,  $L_{i+1}=2H-R_i$ for $1\leq i\leq k$, and $L_{k+2}=2H$. 

\begin{lemma}
\label{lem:veryample}
The collection $\mathscr{L}_k$ is very ample for $0\leq k\leq 8$.
\end{lemma}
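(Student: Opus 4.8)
The plan is to verify criterion~\three\ of Theorem~\ref{thm:closedimmersion}, namely that the product morphism
\[\textstyle\prod_{1\leq i\leq k+2}\varphi_{\vert L_i\vert}\colon X_k\longrightarrow \vert H\vert\times\vert 2H-R_1\vert\times\cdots\times\vert 2H-R_k\vert\times\vert 2H\vert\]
is a closed immersion. Since $X_k$ is projective and every bundle in the collection $\mathscr{L}_k$ of \eqref{eqn:fsec} is basepoint-free, it is enough to check that this morphism is injective on points and on tangent spaces, and in fact I claim that the factors $\varphi_{\vert H\vert}$ and $\varphi_{\vert 2H-R_i\vert}$ for $1\leq i\leq k$ already suffice; including the remaining factor $\varphi_{\vert 2H\vert}$ can only help. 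If $k=0$ there are no exceptional curves and the assertion is immediate because $\varphi_{\vert H\vert}$ is then an isomorphism $X_0=\PP^2\to\PP^2$, so I assume $k\geq 1$ from now on. This is precisely the case-by-case argument sketched for $X_4$ in Example~\ref{exa:X4part2}, carried out uniformly in $k$.

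Two geometric inputs drive the argument. First, since $H$ is the pullback of $\mathscr{O}_{\PP^2}(1)$ along the blow-down $\pi\colon X_k\to\PP^2$ and $\pi_*\mathscr{O}_{X_k}=\mathscr{O}_{\PP^2}$, restriction gives an isomorphism $H^0(X_k,H)\cong H^0(\PP^2,\mathscr{O}_{\PP^2}(1))$, so $\varphi_{\vert H\vert}=\pi$. Consequently $\varphi_{\vert H\vert}$ is an isomorphism away from the pairwise disjoint exceptional curves $R_1,\dots,R_k$, it contracts each $R_i$ to a distinct point $P_i$, and at any $p\in R_i$ the kernel of $d\varphi_{\vert H\vert}$ is exactly $T_pR_i$ --- all standard properties of blowing up points on a surface. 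Second, the intersection pairing gives $(2H-R_i)\cdot R_i=1$ and $(2H-R_i)\cdot R_j=0$ for $j\neq i$, so $\varphi_{\vert 2H-R_i\vert}$ contracts $R_j$ for $j\neq i$ but not $R_i$, and $(2H-R_i)\vert_{R_i}\cong\mathscr{O}_{\PP^1}(1)$. Because $2H-R_i$ is basepoint-free on $X_k$, its restriction to $R_i$ spans a basepoint-free subsystem of $\vert\mathscr{O}_{\PP^1}(1)\vert$; since a proper nonzero subsystem of $\vert\mathscr{O}_{\PP^1}(1)\vert$ has a base point, the restriction map $H^0(X_k,2H-R_i)\to H^0(R_i,\mathscr{O}_{\PP^1}(1))$ must be surjective. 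Hence $\varphi_{\vert 2H-R_i\vert}$ restricts on $R_i$ to the complete linear system $\vert\mathscr{O}_{\PP^1}(1)\vert$, which embeds $R_i$ linearly; in particular it is injective on $R_i$ and immersive along $R_i$.

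Granting these, the verification is a short case split. For points: given $p\neq q$, either $\pi(p)\neq\pi(q)$, in which case $\varphi_{\vert H\vert}$ separates them, or $\pi(p)=\pi(q)=P_i$ for some $i$, in which case $p,q\in R_i$ and $\varphi_{\vert 2H-R_i\vert}$ separates them. For tangent vectors: given $0\neq v\in T_pX_k$, if $p\notin\bigcup_i R_i$ then $d\varphi_{\vert H\vert}$ is injective at $p$; if $p\in R_i$ and $v\notin T_pR_i$ then $d\varphi_{\vert H\vert}(v)\neq 0$ since $\ker d\varphi_{\vert H\vert}=T_pR_i$; and if $p\in R_i$ with $0\neq v\in T_pR_i$ then $d\varphi_{\vert 2H-R_i\vert}(v)\neq 0$ because $\varphi_{\vert 2H-R_i\vert}$ is immersive along $R_i$. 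Thus the product morphism separates points and tangent vectors, hence is a closed immersion, and so $\mathscr{L}_k$ is very ample.

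The only steps needing genuine care are the identification $\varphi_{\vert H\vert}=\pi$ together with the description of its differential along each $R_i$, and the surjectivity of $H^0(X_k,2H-R_i)\to H^0(R_i,\mathscr{O}_{\PP^1}(1))$; I expect the latter to be the main point to get right, although, as indicated, it follows formally from basepoint-freeness of $2H-R_i$ and does not require a vanishing theorem. Everything else is intersection-number bookkeeping that is uniform in $1\leq k\leq 8$.
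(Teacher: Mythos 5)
Your proof is correct and follows essentially the same route as the paper: the paper reduces the Lemma to the case-by-case analysis sketched in Example~\ref{exa:X4part2}, which likewise verifies criterion \three\ of Theorem~\ref{thm:closedimmersion} by using the intersection numbers $(2H-R_i)\cdot R_j$ to see which exceptional curves each $\varphi_{\vert 2H-R_i\vert}$ contracts and then separating points and tangent vectors case by case. The only cosmetic difference is that you additionally invoke $\varphi_{\vert H\vert}=\pi$, whereas the paper manages with the factors $\varphi_{\vert 2H-R_i\vert}$ alone; both variants are covered by criterion \three, and your explicit argument for surjectivity of $H^0(X_k,2H-R_i)\to H^0(R_i,\mathscr{O}_{\mathbb{P}^1}(1))$ supplies the detail the paper leaves as ``a simple case-by-case analysis.''
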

\begin{proof}
For $k=4$, the collection $\mathscr{L}_4$ is precisely the collection \eqref{eqn:fsecX4}, and $\varphi_{\vert \mathscr{L}_4\vert}$ was shown to be a closed immersion in Example~\ref{exa:X4part2}. The proof in that example is valid for any $0\leq k\leq 8$.
\end{proof}

\begin{remark}
In the terminology of Bergman--Proudfoot~\cite{BergmanProudfoot}, Lemma~\ref{lem:veryample} asserts that $\vartheta$ is \emph{great}.
\end{remark}

The following result is well known to experts, but for convenience we guide the reader towards a proof using the technology of toric systems introduced by Hille-Perling~\cite{HillePerling}. A \emph{toric system} is a (cyclically ordered) list of Cartier divisors whose sum is the anticanonical divisor, such that the intersection product of any two adjacent divisors equals 1 and the intersection product of any distinct and non-adjacent divisors equals 0.

\begin{proposition}
The line bundles  \eqref{eqn:fsec} on $X_k$ form a full,  strongly exceptional collection, that is, the vector bundle $\mathscr{T}_k:= \bigoplus_{0\leq i\leq k+2}L_i$ is tilting.
\end{proposition}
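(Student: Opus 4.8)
The plan is to translate $\mathscr{L}_k$ into a toric system, read off fullness from the Hille--Perling correspondence, and then verify strong exceptionality by a finite cohomology computation. Write $L_0=\mathscr{O}_{X_k}$, $L_1=H$, $L_{i+1}=2H-R_i$ for $1\le i\le k$ and $L_{k+2}=2H$. First I would record the consecutive differences $A_i:=L_i-L_{i-1}$ for $1\le i\le k+2$ together with the closing divisor $A_{k+3}:=-K_{X_k}-\sum_{i=1}^{k+2}A_i$; a direct calculation gives
\[
(A_1,\dots,A_{k+3})=\bigl(H,\;H-R_1,\;R_1-R_2,\;\dots,\;R_{k-1}-R_k,\;R_k,\;H-R_1-\cdots-R_k\bigr),
\]
with $\sum_{i=1}^{k+3}A_i=-K_{X_k}$ by construction and with partial sums $\sum_{i=1}^{j}A_i$ running through $L_0,L_1,\dots,L_{k+2}$.

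\textbf{The toric system.} Using $H^2=1$, $H\cdot R_i=0$ and $R_i\cdot R_j=-\delta_{ij}$, I would check that the cyclically ordered tuple $(A_1,\dots,A_{k+3})$ satisfies $A_i\cdot A_{i+1}=1$ for all $i$ (indices modulo $k+3$) and $A_i\cdot A_j=0$ for distinct, non-adjacent $i,j$; this is short and mechanical. Thus $(A_1,\dots,A_{k+3})$ is a toric system on $X_k$, and it is moreover the $k$-fold augmentation of the Beilinson toric system $(H,H,H)$ on $\mathbb{P}^2$ obtained by blowing up $p_1,\dots,p_k$ in turn (each augmentation replaces one entry $A$ by the pair $A-R_m,\,R_m$ and subtracts the new exceptional class $R_m$ from the following entry). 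By Hille--Perling~\cite{HillePerling} --- equivalently, by iterating the blow-up formula for bounded derived categories --- the exceptional sequence of line bundles attached to an augmented toric system is full, and here that sequence is exactly $\mathscr{L}_k$ because the partial sums of the $A_i$ are the $L_j$. In particular $\mathscr{L}_k$ is a full exceptional collection, of the expected length $k+3=\rank K_0(X_k)$.

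\textbf{Strong exceptionality.} It remains to check $\Hom(L_j,L_i)=0$ for $j>i$ and $\Ext^p(L_i,L_j)=H^p(X_k,L_j-L_i)=0$ for all $p>0$ and all $i,j$. The classes of the difference bundles $L_j-L_i$ lie among $\pm H$, $\pm(2H-R_i)$, $\pm 2H$, $\pm(H-R_i)$, $\pm R_i$ and $R_i-R_j$, so this is a finite verification. For each such class $D$, the $H$-coefficient of $K_{X_k}-D$ is negative, so $K_{X_k}-D$ is not effective and Serre duality gives $H^2(X_k,D)=H^0(X_k,K_{X_k}-D)^\vee=0$. Riemann--Roch then yields $H^1(X_k,D)=H^0(X_k,D)-\chi(D)$ with $\chi(D)=1+\tfrac{1}{2}\,D\cdot(D-K_{X_k})$, and a short case analysis completes the argument: the effective classes $H$, $2H-R_i$, $2H$, $H-R_i$, $R_i$ carry exactly $\chi(D)$ sections (as one sees from the geometry on $\mathbb{P}^2$), while $R_i-R_j$ and the remaining negative classes carry none, so $H^1(X_k,D)=0$ throughout and $H^0(X_k,L_i-L_j)=0$ whenever $i<j$. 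Hence $\mathscr{L}_k$ is a full strongly exceptional collection, equivalently $\mathscr{T}_k=\bigoplus_{0\le i\le k+2}L_i$ is a tilting bundle.

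The main obstacle is the strong exceptionality, not the toric-system verification: being a toric system (or an augmentation of one) records only intersection-theoretic data and guarantees fullness, but not the vanishing of higher $\Ext$-groups --- augmentation can destroy strong exceptionality --- so the cohomology computation for the difference bundles must be carried out by hand. Since $k\le 8$ and the divisor classes involved are small and explicit, this is nonetheless a bounded and entirely routine check.
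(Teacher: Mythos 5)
Your proof is correct, and its core coincides with the paper's: you build exactly the same toric system $H,\, H-R_1,\, R_1-R_2,\, \dots,\, R_{k-1}-R_k,\, R_k,\, H-\sum_{i}R_i$ by taking consecutive differences of the $L_i$ (so that the partial sums recover $\mathscr{L}_k$), recognise it as an iterated standard augmentation of the Beilinson system $(H,H,H)$ on $\mathbb{P}^2_\kk$, and invoke Hille--Perling to conclude fullness. Where you diverge is on strong exceptionality. The paper extracts that too from Hille--Perling~\cite[Theorem~5.7]{HillePerling}, which for del Pezzo surfaces guarantees that the sequence attached to such an augmented toric system is full \emph{and} strongly exceptional; you instead decline to trust the augmentation formalism on this point (a reasonable caution on general rational surfaces, where augmentation can indeed destroy strong exceptionality) and verify the $\Ext$-vanishing directly, listing the finitely many difference classes $\pm H$, $\pm 2H$, $\pm(2H-R_i)$, $\pm(H-R_i)$, $\pm R_i$, $R_i-R_j$, killing $H^2$ by Serre duality since $K_{X_k}-D$ has negative $H$-degree, and killing $H^1$ by Riemann--Roch together with a count of sections. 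The arithmetic checks out: $\chi(D)=0$ for every non-effective class in the list and $h^0(D)=\chi(D)$ for the effective ones $H$, $2H$, $2H-R_i$, $H-R_i$, $R_i$, and the non-effectivity of $R_i-R_j$ and of the negative classes also gives $\Hom(L_j,L_i)=0$ for $j>i$. The trade-off is that the paper's proof is a one-line citation whose validity rests on the precise scope of the quoted theorem, while yours is longer but self-contained modulo fullness and makes the del Pezzo-specific cohomology vanishing explicit.
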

\begin{proof}
Beginning with the unique toric system $H, H, H$ on $\mathbb{P}^2_\kk$, construct a toric system on each $X_k$ as follows:  choose $H, H-R_1, R_1, H-R_1$ on $X_1$, repeat for $k\geq 2$, introducing $R_k$ in the second-last position while subtracting $R_k$ from each neighbouring divisor to obtain the system
\[
H, H-R_1, R_1-R_2, R_2-R_3, \dots, R_{k-1}-R_k, R_k, H-\sum_{1\leq i\leq k} R_i
\]
on $X_k$. List these divisors from left to right as $D_1, \dots, D_{k+3}$. Observe that for $1\leq i\leq k+2$ we have $L_i = \mathscr{O}(D_1+\cdots + D_{i})$, and $-K_{X_k} = \mathscr{O}(D_1+\cdots + D_{k+3})$, and Hille--Perling~\cite[Theorem~5.7]{HillePerling} establishes that $(L_0,L_1,\dots, L_{k+2})$ is full, strongly exceptional sequence as required.
\end{proof}

Thus far then, the variety $X_k$ is smooth, the vector bundle $\mathscr{T}_k$ is tilting, and the morphism $\varphi_{\vert \mathscr{L}_k\vert} \colon X_k\to \vert \mathscr{L}_k\vert$ is a closed immersion. A result of Bergman--Proudfoot~\cite[Theorem~2.4]{BergmanProudfoot} now shows that $\varphi_{\vert \mathscr{L}_k\vert}$ identifies $X_k$ with a connected component of the appropriate moduli space:

\begin{corollary}
\label{cor:bergmanProudfoot}
For $0\leq k\leq 8$ and  for the algebra $A_k:=\kk Q/J_{\mathscr{L}_k}$, the morphism $\varphi_{\vert \mathscr{L_k}\vert}$ identifies $X_k$ with a connected component of the moduli space $\mathcal{M}_\vartheta(\modALk)$. 
\end{corollary}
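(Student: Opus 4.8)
The plan is to deduce the statement directly from Bergman--Proudfoot~\cite[Theorem~2.4]{BergmanProudfoot}, once its hypotheses have been matched against the data assembled above. That theorem asserts: if a smooth projective variety carries a tilting bundle that decomposes as a direct sum of line bundles, one of which is the structure sheaf, and the associated stability parameter $\vartheta$ is \emph{great}, then the induced classifying morphism to the moduli space of $\vartheta$-stable modules over the endomorphism algebra, taken with the natural dimension vector, identifies the variety with a connected component of that moduli space. So the task reduces to checking the four ingredients: smoothness, the tilting bundle, the endomorphism algebra, and greatness.

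First I would assemble the inputs for $X_k$. It is smooth and projective, being a del Pezzo surface; the bundle $\mathscr{T}_k=\bigoplus_{0\leq i\leq k+2}L_i$ is tilting by the preceding Proposition, with $L_0=\mathscr{O}_{X_k}$ by construction \eqref{eqn:fsec}; and Proposition~\ref{prop:algebra} identifies $\End_{\mathscr{O}_{X_k}}(\mathscr{T}_k)$ with $A_k=\kk Q/J_{\mathscr{L}_k}$. By Remark~\ref{rem:modules}, the moduli space $\mathcal{M}_\vartheta(Q,J_{\mathscr{L}_k})$ appearing in \eqref{eqn:GITquotients} coincides with $\mathcal{M}_\vartheta(\modALk)$, the fine moduli space of $\vartheta$-stable $A_k$-modules of dimension vector $(1,\dots,1)$. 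Next I would confirm that $\varphi_{\vert\mathscr{L}_k\vert}$ is the relevant classifying morphism: Proposition~\ref{prop:morphismbundles}\one\ identifies its image with $\mathbb{V}(I_Q)\git_\vartheta G$, which is contained in $\mathbb{V}(I_{\mathscr{L}_k})\git_\vartheta G=\mathcal{M}_\vartheta(Q,J_{\mathscr{L}_k})$ since $I_{\mathscr{L}_k}\subseteq I_Q$, while Proposition~\ref{prop:morphismbundles}\two\ gives $\varphi_{\vert\mathscr{L}_k\vert}^*(\mathscr{W}_i)=L_i$, which pins $\varphi_{\vert\mathscr{L}_k\vert}$ down as the morphism classifying the tautological family determined by $\mathscr{T}_k$. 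Finally, Lemma~\ref{lem:veryample} states that $\varphi_{\vert\mathscr{L}_k\vert}$ is a closed immersion, which is exactly the condition that $\vartheta$ be great in the sense of \cite{BergmanProudfoot}, as noted in the remark following that lemma. With all hypotheses in place, \cite[Theorem~2.4]{BergmanProudfoot} yields that $\varphi_{\vert\mathscr{L}_k\vert}$ identifies $X_k$ with a connected component of $\mathcal{M}_\vartheta(\modALk)$.

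The main point of friction — the only step that is not a verbatim citation — is reconciling the moduli problem of \cite[Theorem~2.4]{BergmanProudfoot} with the one built here from the quiver of sections, since Bergman--Proudfoot work with the Gabriel quiver of $A_k$ and a minimal set of relations whereas our $Q$ generally has more arrows. I would dispose of this by observing that $\vartheta$-stability of a module of dimension vector $(1,\dots,1)$ depends only on its $A_k$-module structure, not on the chosen presentation, so the three moduli spaces $\mathcal{M}_\vartheta(Q,J_{\mathscr{L}_k})$, $\mathcal{M}_\vartheta(\modALk)$ and that of \cite[Theorem~2.4]{BergmanProudfoot} all agree, with matching classifying morphisms. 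If instead one wanted a self-contained argument, I would reprove the key point using the tilting equivalence $\Rderived\Hom(\mathscr{T}_k,-)$: it carries each skyscraper $\mathscr{O}_x$ to a $\vartheta$-stable $A_k$-module $M_x$ of dimension vector $(1,\dots,1)$, and the isomorphisms $\Ext^1_{A_k}(M_x,M_x)\cong\Ext^1_{X_k}(\mathscr{O}_x,\mathscr{O}_x)$ show that the tangent space to $\mathcal{M}_\vartheta(\modALk)$ along the image of $\varphi_{\vert\mathscr{L}_k\vert}$ has dimension $2$; since that image is a closed, irreducible, $2$-dimensional subscheme (as $X_k$ is proper and irreducible of dimension $2$), it must be smooth, hence open, hence a connected component, and the closed immersion $\varphi_{\vert\mathscr{L}_k\vert}$ is then an isomorphism onto it.
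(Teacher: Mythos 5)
Your proposal is correct and follows essentially the same route as the paper, which likewise assembles the three ingredients (smoothness of $X_k$, the tilting property of $\mathscr{T}_k$ established via toric systems, and greatness of $\vartheta$ from Lemma~\ref{lem:veryample}) and then invokes Bergman--Proudfoot~\cite[Theorem~2.4]{BergmanProudfoot}. Your additional remarks on reconciling the quiver of sections with the Gabriel quiver, and the sketch via the tilting equivalence, go beyond what the paper records but are consistent with it.
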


We now apply our main results to refine the statement of Corollary~\ref{cor:bergmanProudfoot}.  Let $(Q_k, J_k)$ denote the bound quiver of sections of the collection $\mathscr{L}_k$ on $X_k$. For $k\leq 3$, the variety $X_k$ is a toric variety, in which case $\mathscr{L}_k=\widetilde{\mathscr{L}}_k$ and the method of Craw--Smith~\cite{CrawSmith} shows that the morphism $\varphi_{\vert\mathscr{L}_k\vert}\colon X_k\to \mathcal{M}_\vartheta(\modALk)$ is an isomorphism in each case. The Mori Dream Space $X_k$ is not toric for $k>3$ and we now investigate a pair of examples.

 \subsection{The four-point blow-up}
 On the del Pezzo $X_4$, the collection 
 \[
 \mathscr{L}_4 :=\big(\mathscr{O}_{X_{4}}, H, 2H-R_1, 2H-R_2, 2H-R_3, 2H-R_4, 2H\big)
\]
 from \eqref{eqn:fsec} is precisely the collection listed in  \eqref{eqn:fsecX4}, and the quiver of sections $Q$ is shown in Figure~\ref{fig:X4easy}. In this case, the irrelevant ideal in $\mathbb{A}^{Q_1}$ is given by the intersection of monomial ideals
 \[
 B_{Q} = (y_1,\ldots, y_6)\cap (y_7,y_8,y_9)\cap(y_{10},y_{11}, y_{12})\cap(y_{13},y_{14},y_{15})\cap(y_{16},y_{17},y_{18})\cap(y_{19},\dots ,y_{22}),
 \] 
 and the ideal $I_{Q}$ is given in Example~\ref{exa:X4part2}. Winn~\cite[Chapter~5]{Winn} computes 
\[
I_{\mathscr{L}_4}= \left(\begin{array}{c} 
 y_{15}y_{21}-y_{18}y_{22}, y_{12}y_{20}-y_{17}y_{22}, y_{11}y_{20}-y_{14}y_{21},y_9y_{19}-y_{16}y_{22}\\
 y_8y_{19}-y_{13}y_{21}, y_7y_{19}-y_{10}y_{20},y_6y_{17}-y_5y_{18},y_6y_{16}-y_3y_{18}\\
 y_5y_{16}-y_3y_{17},y_6y_{14}-y_4y_{15},y_6y_{13}-y_2y_{15},y_4y_{13}-y_2y_{14},y_5y_{11}-y_4y_{12}\\
 y_5y_{10}-y_1y_{12},y_4y_{10}-y_1y_{11}, y_3y_8-y_2y_9,y_3y_7-y_1y_9,y_2y_7-y_1y_8\\
 y_3y_{14}y_{21}-y_4y_{16}y_{22},y_5y_{13}y_{21}-y_2y_{17}y_{22}, y_6y_{10}y_{20}-y_1y_{18}y_{22}\\
 y_{16}-y_{17}+y_{18}, y_{13}-y_{14}+y_{15},y_{10}-y_{11}+y_{12},y_7-y_8+y_9,y_3-y_5+y_6 \\
 y_2-y_4+y_6, y_1-y_4+y_5,y_{15}y_{21}-y_{18}y_{22},y_{12}y_{20}-y_{17}y_{22},y_{11}y_{20}-y_{14}y_{21}\\
 y_9y_{19}-y_{17}y_{22}+y_{18}y_{22}, y_8y_{19}-y_{14}y_{21}+y_{18}y_{22},y_6y_{17}-y_5y_{18}\\
y_6y_{14}-y_4y_{15}, y_5y_{11}-y_4y_{12},y_5y_8-y_6y_8-y_4y_9+y_6y_9
    \end{array}\right),
 \]
and a Macaulay2~\cite{M2} computation gives $I_{Q}=(I_{\mathscr{L}_4} : B_Q^\infty)$ in this case. Since $\mathscr{L}_4$ is very ample, Theorem~\ref{thm:surjective} implies that the closed immersion $\varphi_{\vert\mathscr{L}_4\vert}\colon X_4\rightarrow \mathcal{M}_\vartheta(\modA_4)$ is an isomorphism.

\subsection{The five-point blow-up}
To obtain $X_5$ we blow-up $\mathbb{P}^2_\kk$ at the torus-invariant points in addition to $(1,1,1)$ and $(1,2,3)$, in which case $\Cox(X_5)$ is the quotient of $\kk[x_1, \dots, x_{16}]$ by
\[ 
I_{X_5} = \left( \begin{array}{c} 
x_5x_{16}+x_6x_{13}-3x_8x_{10}, x_4x_{16}+2x_6x_{14}+x_7x_{12}, x_4x_{16}+x_6x_{14}+x_9x_{10} \\
x_3x_{16}+x_6x_{15}+x_8x_{12}, x_3x_{16}+2x_6x_{15}+x_9x_{11}, x_2x_{16}+x_7x_{15}-2x_8x_{14} \\
x_2x_{16}+3x_7x_{15}+2x_9x_{13}, x_1x_{16}+2x_{10}x_{15}-x_{11}x_{14}, x_1x_{16}+3x_{10}x_{15}+x_{12}x_{13} \\ 
x_2x_6-x_3x_7+x_4x_8, 2x_2x_6-3x_3x_7-x_5x_9, x_1x_6-x_3x_{10}+x_4x_{11} \\ 
x_1x_6-3x_3x_{10}-x_5x_{12}, x_1x_7-x_2x_{10}+x_4x_{13}, x_1x_7-2x_2x_{10}+x_5x_{14}\\ 
x_1x_8-x_2x_{11}+x_3x_{13}, -2x_1x_8+x_2x_{11}-x_5x_{15}, -x_1x_9+2x_2x_{12}+3x_3x_{14}\\ 
-2x_1x_9+x_2x_{12}+3x_4x_{15}, x_6x_{13}-x_7x_{11}+x_8x_{10} 
\end{array}\right),
 \]
see Winn~\cite[Chapter~2]{Winn}. On $X_5$, the full strongly exceptional collection \eqref{eqn:fsec} is 
 \[
 \mathscr{L}_5 :=\big(\mathscr{O}_{X_{5}}, H, 2H-R_1, 2H-R_2, 2H-R_3, 2H-R_4, 2H-R_5, 2H\big)
\]
and the quiver of sections $Q$ is shown in Figure~\ref{fig:X5} (in fact we omit one arrow labelled $x_1x_2x_4x_5x_{16}$ with tail at 0 and head at $4$ to prevent the figure from becoming illegible). Arrows with tail at 0 and head at 1 are listed $a_1,\dots, a_{10}$ from the top of Figure~\ref{fig:X5} to the bottom;  
     \begin{figure}[!ht]
    \centering
      \psset{unit=1.3cm}
      \begin{pspicture}(-0.6,-0.3)(10,4.5)
      \cnodeput(-0.5,2){AA}{0}
      \cnodeput(3.5,2){A}{1}
      \cnodeput(8,4){B}{2} 
      \cnodeput(8,3){C}{3}
       \cnodeput(8,2){D}{4}
       \cnodeput(8,1){E}{5} 
      \cnodeput(8,0){G}{6}
     \cnodeput(10,2){F}{7}
      \psset{nodesep=0pt}
       \nccurve[angleA=66,angleB=114]{->}{AA}{A}\lput*{:U}{\tiny{$x_1x_2x_{6}$}}
       \nccurve[angleA=48,angleB=132]{->}{AA}{A}\lput*{:U}{\tiny{$x_1x_3x_7$}}
        \nccurve[angleA=32,angleB=148]{->}{AA}{A}\lput*{:U}{\tiny{$x_1x_4x_8$}}
        \nccurve[angleA=18,angleB=162]{->}{AA}{A}\lput*{:U}{\tiny{$x_1x_5x_9$}}
        \nccurve[angleA=6,angleB=174]{->}{AA}{A}\lput*{:U}{\tiny{$x_2x_3x_{10}$}}
         \nccurve[angleA=354,angleB=186]{->}{AA}{A}\lput*{:U}{\tiny{$x_2x_4x_{11}$}}
        \nccurve[angleA=342,angleB=198]{->}{AA}{A}\lput*{:U}{\tiny{$x_2x_5x_{12}$}}
        \nccurve[angleA=328,angleB=212]{->}{AA}{A}\lput*{:U}{\tiny{$x_3x_4x_{13}$}}
        \nccurve[angleA=312,angleB=228]{->}{AA}{A}\lput*{:U}{\tiny{$x_3x_5x_{14}$}}
        \nccurve[angleA=294,angleB=246]{->}{AA}{A}\lput*{:U}{\tiny{$x_4x_5x_{15}$}}
        \nccurve[angleA=85,angleB=160]{->}{A}{B}\lput*{:U}(0.5){\tiny{$x_2x_6$}}
        \nccurve[angleA=77,angleB=170]{->}{A}{B}\lput*{:U}(0.65){\tiny{$x_3x_7$}}
        \nccurve[angleA=68,angleB=182]{->}{A}{B}\lput*{:U}(0.55){\tiny{$x_4x_8$}}
        \nccurve[angleA=60,angleB=190]{->}{A}{B}\lput*{:U}(0.65){\tiny{$x_5x_9$}}
        \nccurve[angleA=50,angleB=160]{->}{A}{C}\lput*{:U}(0.35){\tiny{$x_1x_6$}}
        \nccurve[angleA=40,angleB=170]{->}{A}{C}\lput*{:U}(0.65){\tiny{$x_3x_{10}$}}
        \nccurve[angleA=30,angleB=180]{->}{A}{C}\lput*{:U}(0.35){\tiny{$x_4x_{11}$}}
        \nccurve[angleA=25,angleB=190]{->}{A}{C}\lput*{:U}(0.65){\tiny{$x_5x_{12}$}}
        \nccurve[angleA=13,angleB=167]{->}{A}{D}\lput*{:U}(0.35){\tiny{$x_1x_{7}$}}
        \nccurve[angleA=5,angleB=175]{->}{A}{D}\lput*{:U}(0.65){\tiny{$x_2x_{10}$}}
        \nccurve[angleA=355,angleB=185]{->}{A}{D}\lput*{:U}(0.35){\tiny{$x_4x_{13}$}}
        \nccurve[angleA=347,angleB=193]{->}{A}{D}\lput*{:U}(0.65){\tiny{$x_5x_{14}$}}
        \nccurve[angleA=335,angleB=170]{->}{A}{E}\lput*{:U}(0.65){\tiny{$x_1x_{8}$}}
        \nccurve[angleA=330,angleB=180]{->}{A}{E}\lput*{:U}(0.35){\tiny{$x_2x_{11}$}}
        \nccurve[angleA=320,angleB=190]{->}{A}{E}\lput*{:U}(0.65){\tiny{$x_3x_{13}$}}
        \nccurve[angleA=310,angleB=200]{->}{A}{E}\lput*{:U}(0.35){\tiny{$x_5x_{15}$}}
        \nccurve[angleA=300,angleB=170]{->}{A}{G}\lput*{:U}(0.65){\tiny{$x_1x_9$}}
        \nccurve[angleA=292,angleB=178]{->}{A}{G}\lput*{:U}(0.55){\tiny{$x_2x_{12}$}}
        \nccurve[angleA=283,angleB=187]{->}{A}{G}\lput*{:U}(0.7){\tiny{$x_3x_{14}$}}
        \nccurve[angleA=273,angleB=197]{->}{A}{G}\lput*{:U}(0.5){\tiny{$x_4x_{15}$}}
        \ncline{->}{B}{F}\lput*{:U}{\tiny{$x_1$}}    
        \ncline{->}{C}{F}\lput*{:U}{\tiny{$x_2$}}    
        \ncline{->}{D}{F}\lput*{:U}{\tiny{$x_3$}}    
        \ncline{->}{E}{F}\lput*{:U}{\tiny{$x_4$}}    
        \ncline{->}{G}{F}\lput*{:U}{\tiny{$x_5$}}    
        \nccurve[angleA=70,angleB=170]{->}{AA}{B}\lput*{:U}(0.5){\tiny{$x_2x_3x_4x_5x_{16}$}}    
        \nccurve[angleA=65,angleB=170]{->}{AA}{C}\lput*{:U}(0.5){\tiny{$x_1x_3x_4x_5x_{16}$}}    
        \nccurve[angleA=295,angleB=190]{->}{AA}{E}\lput*{:U}(0.5){\tiny{$x_1x_2x_3x_5x_{16}$}}    
        \nccurve[angleA=290,angleB=190]{->}{AA}{G}\lput*{:U}(0.5){\tiny{$x_1x_2x_3x_4x_{16}$}}    
         \end{pspicture}    \caption{Almost the quiver of sections for $\mathscr{L}_5$ (one arrow omitted from 0 to 4)}
  \label{fig:X5}
  \end{figure}
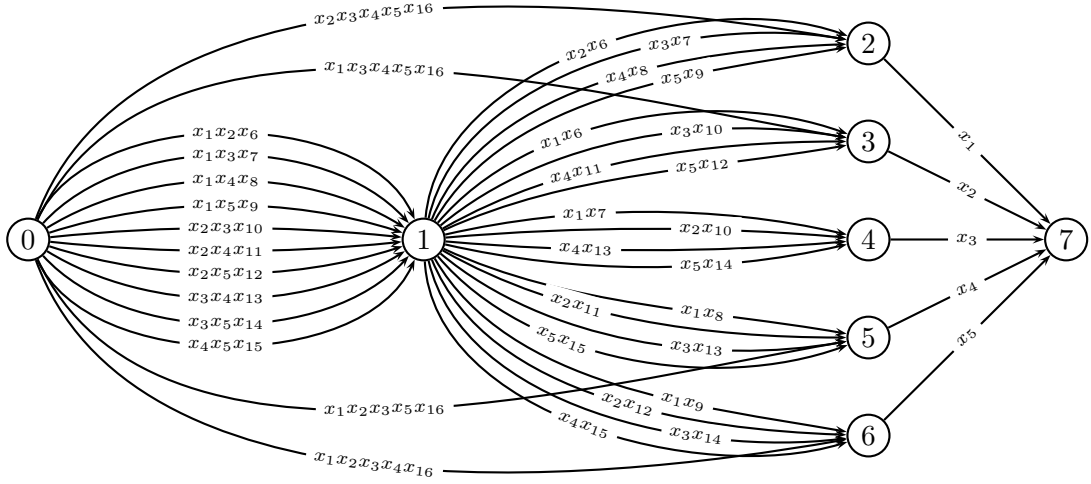 
  list those with tail at 1 as $a_{11}, \dots, a_{30}$ from top to bottom;  list those with head at 7 as $a_{31}, \dots, a_{35}$ from top to  bottom; and list those with tail at 0 and head at $i\geq 2$ as $a_{36}, \dots, a_{40}$ from top to bottom, where the arrow omitted from the figure is $a_{38}$. List the coordinates of $\mathbb{A}^{Q_1}_\kk$ as $y_1,\dots, y_{40}$, and compute the kernel of \eqref{eqn:Psibar} to obtain the ideal
\[
I_Q= \left(\begin{array}{c} y_7+2y_9-y_{10},\; y_6-2y_8+y_{10},\; y_5+y_8-y_9,\; y_4+y_9-2y_{10},\; y_3-y_8+y_1\\
y_2+2y_8-y_9,y_1+3y_8-y_9-y_{10},\; y_{28}+2y_{29}-y_{30},\; y_{27}+y_{29}-2y_{30} \\
y_{24}-2y_{25}+y_{26},\; y_{23}-y_{25}+y_{26},\; y_{20}+y_{21}-y_{22}\\
y_{19}+2y_{21}-y_{22},\; 2y_{16}+y_{17}+y_{18},\; 2y_{15}+3y_{17}+y_{18}\\
y_{12}+2y_{13}+y_{14} ,y_{11}+3y_{13}+y_{14},\; {10}y_{29}-y_9y_{30},\; y_{14}y_{31}+y_9-2y_{10} \\
2y_{8}y_{29}+2y_{8}y_{30}-3y_{9}y_{30}-y_{40},\; y_{10}y_{25}-y_8y_{26}, \; y_{13}y_{31}-y_8+y_{10} \\
2y_9y_{25}+2y_8y_{26}-3y_{9}y_{26}-y_{39},\; 2y_{10}y_{21}+2y_8y_{22}-3y_{10}y_{22}-y_{38}\\
y_9y_{21}-y_8y_{22},\; 3y_{10}y_{17}-2y_8y_{18}+3y_{10}y_{18}-2y_{37}, \; y_{21}y_{26}y_{29}-y_{22}y_{25}y_{30} \\
3y_9y_{17}+2y_8y_{18}-y_{37},\; 6y_{10}y_{13}-2y_8y_{14}+3y_{10}y_{14}-y_{36}\\
3y_9y_{13}+y_8y_{14}-y_{36},\; y_{30}y_{35}-y_{10},\; y_{29}y_{35}-y_9 ,y_{26}y_{34}-y_{10} \\
y_{25}y_{34}-y_8 ,y_{22}y_{33}-y_9,\; y_{21}y_{33}-y_8,\; y_{18}y_{32}+2y_9-y_{10},\; y_{17}y_{32}-2y_8+y_{10}\\
  \end{array}\right).
\]
Details of the computation are given by Winn~\cite[Chapter~6]{Winn}, where it is also shown that 
\[
I_{\mathscr{L}_5}= \left(\begin{array}{c} y_7+2y_9-y_{10} ,y_6-2y_8+y_{10} , y_5+y_8-y_9 , y_4+y_9-2y_{10}, y_3-y_8+y_{10}   \\
y_2+2y_8-y_9 , y_1+3y_8-y_9-y_{10}, y_{28}+2y_{29}-y_{30} , y_{27}+y_{29}-2y_{30} \\
y_{24}-2y_{25}+y_{26}  ,y_{23}-y_{25}+y_{26}   ,y_{20}+y_{21}-y_{22}   ,y_{19}+2y_{21}-y_{22} \\
2y_{16}+y_{17}+y_{18},2y_{15}+3y_{17}+y_{18} ,y_{12}+2y_{13}+y_{14},y_{11}+3y_{13}+y_{14}\\
y_{10}y_{29}-y_9y_{30},2y_8y_{29}+2y_8y_{30}-3y_9y_{30}-y_{40} ,y_{10}y_{25}-y_8y_{26}  \\
2y_9y_{25}+2y_8y_{26}-3y_9y_{26}-y_{39} ,2y_{10}y_{21}+2y_8y_{22}-3y_{10}y_{22}-y_{38} \\
y_9y_{21}-y_8y_{22}  ,3y_{10}y_{17}-2y_8y_{18}+3y_{10}y_{18}-2y_{37} ,3y_9y_{17}+2y_8y_{18}-y_{37} \\ 6y_{10}y_{13}-2y_8y_{14}+3y_{10}y_{14}-y_{36}  ,3y_9y_{13}+y_8y_{14}-y_{36} ,y_{26}y_{34}-y_{30}y_{35} \\
y_{22}y_{33}-y_{29}y_{35} ,y_{21}y_{33}-y_{25}y_{34} , y_{18}y_{32}+2y_{29}y_{35}-y_{30}y_{35}   \\
y_{17}y_{32}-2y_{25}y_{34}+y_{30}y_{35}    , y_{14}y_{31}+y_{29}y_{35}-2y_{30}y_{35}     \\
y_{13}y_{31}-y_{25}y_{34}+y_{30}y_{35}
  \end{array}\right).
\]
A computation using Macaulay2 gives that $I_{Q}=(I_{\mathscr{L}_5} : B_Q^\infty)$ in this case. Since $\mathscr{L}_5$ is very ample, Theorem~\ref{thm:surjective} implies that the morphism $\varphi_{\vert\mathscr{L}_5\vert}\colon X_5\rightarrow \mathcal{M}_\vartheta(\modA_5)$ is an isomorphism.

\end{document}